\definecolor{bleudefrance}{rgb}{0.19, 0.55, 0.91}
\newcommand{\colorset}{bleudefrance}
\numberwithin{equation}{section}
\theoremstyle{plain}
\newtheorem{theorem}{Theorem}[section]
\newtheorem{lemma}[theorem]{Lemma}
\newtheorem{corollary}[theorem]{Corollary}
\theoremstyle{definition}
\newtheorem{definition}[theorem]{Definition}
\newtheorem{remark}[theorem]{Remark}
\newtheorem{example}[theorem]{Example}
\newcommand{\C}{\mathbb{C}}
\newcommand{\R}{\mathbb{R}}
\newcommand{\s}{\mathbb{S}}
\newcommand{\A}{\mathcal{A}}
\renewcommand{\H}{\mathcal{H}}
\renewcommand{\div}{\operatorname{div}}
\newcommand{\Per}{\operatorname{Per}}
\newcommand{\pv}{\operatorname{p.v.}}
\begin{document}

\title{Nonlocal Free Boundary minimal surfaces}

\author[M. Badran]{Marco Badran}
	\address{ETH Z\"urich, Department of Mathematics, Rämistrasse 101, 8092 Zürich, Switzerland.}
 	\email{marco.badran@math.ethz.ch}
 \author[S. Dipierro]{Serena Dipierro}
 	\address{Department of Mathematics and Statistics, The University of Western Australia, 35 Stirling Highway, Perth, WA 6009, Australia}
 	\email{serena.dipierro@uwa.edu.au}
 \author[E. Valdinoci]{Enrico Valdinoci}
	\address{Department of Mathematics and Statistics, The University of Western Australia, 35 Stirling Highway, Perth, WA 6009, Australia}
	\email{enrico.valdinoci@uwa.edu.au}

\maketitle

\begin{abstract}
	We introduce the nonlocal analogue of the classical free boundary minimal hypersurfaces in an open domain~$\Omega$ of~$\R^n$ as the (boundaries of) critical points of the fractional perimeter~$\operatorname{Per}_s(\cdot,\,\Omega )$ with respect to inner variations leaving~$\Omega$ invariant. We deduce the Euler--Lagrange equations and prove a few surprising features, such as the existence of critical points without boundary and a strong volume constraint in~$\Omega$ for unbounded hypersurfaces. Moreover, we investigate stickiness properties and regularity across the boundary.
\end{abstract}

\tableofcontents

\section{Introduction}

\subsection{Nonlocal free boundary minimal surfaces}

The notion of ``free boundary minimal surfaces'' arises quite naturally in many problems of physical interest. While an area-minimising surface is typically a surface that minimises area for given boundary data (producing surfaces with zero mean curvature that attach at a prescribed curve along the boundary), in the free boundary case the boundary of the surface is not fixed, and it can ``float'' to adjust itself in order to minimise the surface area subject to certain constraints. For example, a droplet in a container (or a cell membrane in a biological organism) tends to minimise surface tension for a given volume: in this case, the boundary of the droplet is not prescribed and forms a contact angle with the container which is of practical importance.

{F}rom the mathematical point of view, the analysis of free boundary minimal surfaces dates back to Richard Courant~\cite{MR2478}, who considered the problem of minimising the area when the boundaries are ``free to move on prescribed manifolds'' and showed that the minimal surface obtained in this way meets the prescribed manifold orthogonally.

More precisely, free boundary minimal hypersurfaces in a domain~$\Omega$ are defined as
critical points of the area functional with respect to variations supported up to the boundary of~$\Omega$, generated by a flow that leaves~$\partial\Omega$ invariant.
	There is a vast literature related to free boundary minimal surfaces, with a special attention to the case of~$\Omega$ being the unit ball of~$\R^3$, see e.g. the survey~\cite{Li2020}.
	\medskip

In this article, we extend the notion of free boundary minimal surface to the nonlocal setting and we study the
basic properties of these objects, also discovering some quite surprising facts.
\medskip

Nonlocal minimal surfaces were first introduced in~\cite{Caffarelli-Roquejoffre-Savin2010} as minimisers of an integral energy, related to the interfaces of phase transition models accounting for long-range interactions, see e.g.~\cite{MR4581189}.

Given an open set~$\Omega\subset\R^n$ and a (measurable) set~$E\subset \R^n$, we define for~$s\in(0,1)$ the~$s$-fractional perimeter of~$E$ in~$\Omega$ as
\begin{equation*}
	\Per_s(E;\Omega)\coloneqq c_{n,s}\left(\int_{E^c\cap \Omega}\int_{E\cap\Omega}+\int_{E\cap \Omega}\int_{E^c\cap\Omega^c}+\int_{E\cap \Omega^c}\int_{E^c\cap\Omega}\right)\frac{dxdy}{|x-y|^{n+s}},
\end{equation*}
where~$c_{n,s}$ is a renormalisation constant, given explicitly in formula~\eqref{eq: cns} below.

Here above and in the rest of the paper, the notation~$E^c$ is used to denote the complementary set of~$E$, namely~$E^c\coloneqq \R^n\setminus E$.

\medskip

An~$s$-minimal hypersurface in~$\Omega$ is (the boundary of) a critical point of~$\Per_s(\cdot\,;\Omega)$ with respect to variations compactly supported in~$\Omega$. It is well known that critical points satisfy weakly the Euler--Lagrange equation 
\begin{equation*}
	\H^s_{E}(x)\coloneqq c_{n,s}\pv \int_{\R^n}\frac{\chi_{E^c}(y)-\chi_E(y)}{|x-y|^{n+s}}dy=c_{n,s}\lim_{\delta\to 0}\int_{\R^n\setminus B_\delta(x)}\frac{\chi_{E^c}(y)-\chi_E(y)}{|x-y|^{n+s}}dy=0
\end{equation*} 
for~$x\in\partial E\cap \Omega$, where~``$\pv$'' is a standard abbreviation for wording
``in the Cauchy principal value sense'', see e.g.~\cite{Figalli-Fusco-Maggi-Millot-Morini2015}.
\medskip

Ever since their introduction, these objects have been subject of intensive investigation, concerning both their existence and regularity properties~\cite{Caffarelli-Valdinoci2013,Savin-Valdinoci2013,Davila-delPino-Wei2018,Cinti-Serra-Valdinoci2019,Cabre-Cinti-Serra2020}, as well as convergence to the classical perimeter~\cite{MR3586796,MR1942130,Caffarelli-Valdinoci2011,Ambrosio-DePhilippis-Martinazzi2011,Florit2024}. Recently, stable and finite-index critical points of the fractional perimeter were studied in the context of closed Riemannian manifolds~\cite{Florit2024,Caselli-Florit-Serra2024}, obtaining, among other things, a nonlocal analogue of Yau's conjecture in dimension~$3$~\cite{Caselli-Florit-Serra2024Yau}. 

\medskip

Perhaps the most natural question one could ask after defining~$s$-minimal surfaces is a nonlocal analogue of Plateau's problem: given a certain set~$E'\subset \Omega^c$, does it exist a set~$E$ such that~$E$ minimises~$\Per_s(\cdot\,,\Omega)$ among all sets satisfying~$E\equiv E'$ in~$\Omega^c$?

This question was already answered positively in the original paper by Caffarelli, Roquejoffre and Savin~\cite{Caffarelli-Roquejoffre-Savin2010} using the direct method of calculus of variations. 
\medskip

In some sense, Plateau's problem can be interpreted as the Dirichlet problem for the nonlocal minimal surface equation. The goal of the present paper is to introduce the natural Neumann counterpart, namely \emph{free boundary~$s$-minimal hypersurfaces}. 
\medskip

We define free boundary~$s$-minimal hypersurfaces to be boundaries of critical points of~$\Per_s(\cdot\,,\Omega)$ with respect to inner variations compactly supported in~$\R^n$ (not necessarily in~$\Omega$) and leaving~$\partial\Omega$ invariant, namely the variations generated by a vector field which is tangent to~$\partial\Omega$ at any of its points.
\medskip

In particular, we will show that the Euler--Lagrange equations require that critical points satisfy weakly the~$s$-minimal hypersurface equation in~$\Omega$
\begin{equation}\label{eq: s-min surf}
	c_{n,s}\int_{\R^n}\frac{\chi_{E^c}(y)-\chi_{E}(y)}{|x-y|^{n+s}}dy=0,\quad\quad {\mbox{for all }} x\in \partial E\cap\Omega
\end{equation}
and the nonlocal free boundary condition outside of~$\Omega$
\begin{equation}\label{eq: free boundary}
	c_{n,s}\int_{\Omega}\frac{\chi_{E^c}(y)-\chi_{E}(y)}{|x-y|^{n+s}}dy=0,
	\quad\quad  {\mbox{for all }} x\in \partial E\cap\overline\Omega^c.
\end{equation}

While the quantity in~\eqref{eq: s-min surf} is typically referred to as the nonlocal mean curvature (or~$s$-mean curvature) of the set~$E$, the one in~\eqref{eq: free boundary} is a new object accounting
for nonlocal interactions of a given point outside~$\Omega$ with points in the reference set~$\Omega$.

Correspondingly, while~\eqref{eq: s-min surf} is the Euler--Lagrange equation
of the~$s$-perimeter functional with Dirichlet datum,
the Neumann counterpart will present equation~\eqref{eq: free boundary} as an additional
prescription, with the interesting feature that
nonlocal free boundary minimal surfaces
satisfy the nonlocal mean curvature equation~\eqref{eq: s-min surf}
along all boundary points inside the reference set~$\Omega$
and the ``Neumann condition''~\eqref{eq: free boundary}
outside the reference set~$\Omega$, in a sense that we are now making precise.

\subsection{The Euler--Lagrange equation of nonlocal free boundary minimal surfaces}
Let $\Omega\subset \R^n$ be a set with $C^1$ boundary and consider a vector field~${\mathcal{X}}\in C^\infty_c(\R^n,\R^n)$ with the property that 
\begin{equation}\label{eq: tangency}
	{\mathcal{X}}(x)\in T_x(\partial\Omega),\quad {\mbox{for all }} x\in\partial\Omega.
\end{equation}
Let~$\Phi_t$ be the flow associated with~${\mathcal{X}}$, namely the solution to~$\partial_t\Phi_t={\mathcal{X}}\circ \Phi_t$ with initial condition~$\Phi_0=\operatorname{id}$. Given a set~$E$, denote by~$E_t\coloneqq \Phi_t(E)$. 

\begin{definition}[Nonlocal free boundary minimal surfaces] Let~$E\subset\R^n$. We say that~$\partial E$
is a nonlocal free boundary minimal hypersurface in $\Omega$ (or, with a slight abuse of notation, that~$
E$ is a nonlocal free boundary minimal hypersurface) if, for all~${\mathcal{X}}\in C^\infty_c(\R^n,\R^n)$
satisfying~\eqref{eq: tangency},
\begin{equation}\label{eq: criticality}
	\frac{d}{dt}\Big\vert_{t=0}\Per_s(E_t;\Omega)=0.
\end{equation}\end{definition}

We will compute~\eqref{eq: criticality} in a slightly more general setting, where the fractional kernel~$c_{n,s}|z|^{-n-s}$ is replaced by a possibly anisotropic kernel. 

For this, we say that a kernel~$K\in C^{1}(\R^n\setminus\{0\},[0,+\infty))$ is an
\emph{admissible~$s$-kernel} if it is symmetric around the origin (namely,~$K(z)=K(-z)$) and satisfies the bound 
\begin{equation}\label{nvcmxwkdefaddmiss}
	K(z)\leq \frac{C_K}{|z|^{n+s}}
\end{equation}
for some constant~$C_K>0$. 

For~$s\in(0,1)$ and for any admissible~$s$-kernel~$K$, we set 
\begin{eqnarray*}
	\Per_K(E;\Omega)&\coloneqq& \left(\int_{E^c\cap \Omega}\int_{E\cap\Omega}+\int_{E\cap \Omega}\int_{E^c\cap\Omega^c}+\int_{E\cap \Omega^c}\int_{E^c\cap\Omega}\right)K(x-y)dxdy\\
	&\eqqcolon& \mathcal{I}_K(E^c\cap \Omega,E\cap\Omega)+\mathcal{I}_K(E\cap \Omega,E^c\cap\Omega^c)+\mathcal{I}_K(E\cap \Omega^c,E^c\cap\Omega).
\end{eqnarray*}
We also denote, for all~$x\in\partial E$,
\begin{equation*}
	\H^K_{E}(x)\coloneqq\pv \int_{\R^n}\big(\chi_{E^c}(y)-\chi_{E}(y)\big)K(x-y)dy
\end{equation*}
and, for all~$x\in \partial E\cap \overline\Omega^c$,
\begin{equation*}
	\A^K_{E,\Omega}(x)\coloneqq\int_{\Omega}\big(\chi_{E^c}(y)-\chi_{E}(y)\big)K(x-y)dy.
\end{equation*}
This setting can be considered as a generalization of
the nonlocal mean curvature in~\eqref{eq: s-min surf}
and the free boundary condition in~\eqref{eq: free boundary}
to possibly anisotropic kernels.

Given~$\alpha\in[0,1]$, we say that a set~$U\subset\R^n$ is of class~$C^{1,\alpha}$
if there exist~$\rho$, $M>0$ such that for every~$p\in\partial U$
the set~$U\cap B_\rho(p)$ can be written as the subgraph, in some direction,
of a function of class~$C^{1,\alpha}$ and with~$C^{1,\alpha}$-norm bounded by~$M$.


Furthermore, 
given~$E$, $\Omega\subset\R^n$ of class~$C^{1}$, we
say that they intersect uniformly transversally if there exists a constant~$\mu\in(0,1)$ such that 
\begin{equation}\label{eq: unif transversality}
	\sup_{q\in\partial \Omega\cap\partial E}|\nu_{\partial\Omega}(q)\cdot \nu_{\partial E}(q)|\leq 1-\mu.
\end{equation}

With this notation, the Euler--Lagrange equation reads as follows.

\begin{theorem}\label{PROP1}
Let~$s\in(0,1)$ and~$K$ be an admissible~$s$-kernel.
Let~$\Omega$ be a set of class~$C^{1}$ and~$E$ be a set of class~$C^{1,\alpha}$ for some~$\alpha\in(s,1)$. Assume also that~$\Omega$ and~$E$ intersect uniformly transversally.

Let~${\mathcal{X}}\in C^{\infty}_c(\R^n,\R^n)$ be a vector field satisfying~\eqref{eq: tangency} and let~$\Phi_t$ be the flow generated by~${\mathcal{X}}$. 
	
	Then,
	\begin{equation}\label{eq: first variation}
		\begin{split}&
			\frac{d}{dt}\Big\vert_{t=0}\Per_K(\Phi_t(E);\Omega)\\&\quad=\int_{\partial E\cap\Omega}H^K_{E}(x){\mathcal{X}}(x)\cdot\nu_{\partial E}(x)d\mathscr{H}^{n-1}_x+\int_{\partial E\cap\Omega^c}\A^K_{E,\Omega}(x){\mathcal{X}}(x)\cdot\nu_{\partial E}(x)d\mathscr{H}^{n-1}_x.
		\end{split}
	\end{equation}
	\end{theorem}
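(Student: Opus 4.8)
The plan is to differentiate each of the three interaction terms composing $\Per_K(\Phi_t(E);\Omega)$ under the change of variables $x\mapsto\Phi_t(x)$, $y\mapsto\Phi_t(y)$. Since $\mathcal{X}$ is tangent to $\partial\Omega$, the flow $\Phi_t$ leaves $\Omega$ invariant, hence $\Phi_t(E)\cap\Omega=\Phi_t(E\cap\Omega)$, $\Phi_t(E)^c\cap\Omega=\Phi_t(E^c\cap\Omega)$, and similarly for the $\Omega^c$ pieces. Thus, writing $J_t(x)\coloneqq\det D\Phi_t(x)$ for the Jacobian, each term becomes, for instance for $\mathcal{I}_K(\Phi_t(E^c\cap\Omega),\Phi_t(E\cap\Omega))$,
\begin{equation*}
\int_{E^c\cap\Omega}\int_{E\cap\Omega}K\big(\Phi_t(x)-\Phi_t(y)\big)\,J_t(x)J_t(y)\,dx\,dy,
\end{equation*}
and one differentiates at $t=0$. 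Using $\partial_t\big|_{t=0}\Phi_t=\mathcal{X}$, $\partial_t\big|_{t=0}J_t=\div\mathcal{X}$, and $\partial_t\big|_{t=0}K(\Phi_t(x)-\Phi_t(y))=\nabla K(x-y)\cdot(\mathcal{X}(x)-\mathcal{X}(y))$, one obtains for each term an expression of the schematic form
\begin{equation*}
\int\!\!\int \Big[\nabla K(x-y)\cdot(\mathcal{X}(x)-\mathcal{X}(y))+K(x-y)\big(\div\mathcal{X}(x)+\div\mathcal{X}(y)\big)\Big]dx\,dy.
\end{equation*}
The first technical point is that these integrands are genuinely integrable: near the diagonal, $\nabla K(x-y)\cdot(\mathcal{X}(x)-\mathcal{X}(y))=O(|x-y|^{-n-s}\cdot|x-y|)=O(|x-y|^{1-n-s})$, which is integrable in one variable; one must also justify differentiating under the integral sign, using the $C_K|z|^{-n-s}$ bound, the $C^1$ regularity of $K$, and the compact support and smoothness of $\mathcal{X}$, with domination uniform in $t$ near $0$.

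The heart of the computation is to recognize each differentiated term as a divergence and integrate by parts, converting the volume integrals into boundary integrals over $\partial E$. Concretely, the combination $\nabla K(x-y)\cdot\mathcal{X}(x)+K(x-y)\div\mathcal{X}(x)=\div_x\big(K(x-y)\mathcal{X}(x)\big)$, and symmetrically in $y$ (here the symmetry $K(z)=K(-z)$, which gives $\nabla K(x-y)=-\nabla K(y-x)$, is used to pair the mixed term with the divergence in $y$). Applying the divergence theorem in $x$ over $E\cap\Omega$ (resp. $E^c\cap\Omega$, etc.) produces boundary contributions along $\partial E\cap\Omega$, $\partial E\cap\Omega^c$, and along $\partial\Omega$; the $\partial\Omega$ contributions carry the factor $\mathcal{X}\cdot\nu_{\partial\Omega}$, which vanishes identically by the tangency hypothesis~\eqref{eq: tangency}. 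After collecting the three interaction terms, the pieces along $\partial E$ reorganise: a boundary point $x\in\partial E\cap\Omega$ sees, through the $y$-integration over all of $\R^n$, exactly the kernel $\chi_{E^c}(y)-\chi_{E}(y)$ integrated against $K(x-y)$, i.e. $\H^K_E(x)$; a boundary point $x\in\partial E\cap\overline\Omega^c$ sees only $y\in\Omega$ (the $\Omega^c$–$\Omega^c$ interactions do not appear in $\Per_K(\cdot;\Omega)$), giving $\A^K_{E,\Omega}(x)$. This bookkeeping — checking that the coefficients of the four set–products combine with the correct signs to yield precisely $\H^K_E$ inside $\Omega$ and $\A^K_{E,\Omega}$ outside — is the most error-prone part and should be carried out carefully term by term.

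The main obstacle is the regularity needed to make the integration by parts and the principal-value interpretation rigorous at $\partial E\cap\Omega$, where $\H^K_E$ is only a principal-value integral. The hypotheses $E\in C^{1,\alpha}$ with $\alpha>s$ and uniform transversality~\eqref{eq: unif transversality} are precisely what is needed here: the $C^{1,\alpha}$ regularity with $\alpha>s$ ensures that $\H^K_E(x)$ exists as a principal value and is a continuous (indeed $C^{0,\alpha-s}$) function on $\partial E\cap\Omega$, so that the boundary integral on the right-hand side of~\eqref{eq: first variation} is well-defined; and the transversality condition guarantees that $\partial E$ and $\partial\Omega$ meet cleanly, so that the sets $E\cap\Omega$, $E\cap\Omega^c$ have $C^1$-rectifiable reduced boundaries to which the divergence theorem applies and so that no anomalous contribution arises from the $(n-2)$-dimensional set $\partial E\cap\partial\Omega$. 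The strategy to handle the principal value is standard: excise a tubular $\delta$-neighborhood of the diagonal (or of $\partial E$) before differentiating and integrating by parts, carry out the computation on the excised domain where everything is smooth, and then let $\delta\to0$, checking that the error terms vanish — the symmetry of $K$ causes the most singular contributions on the excised boundary to cancel, leaving exactly the principal-value expressions in the limit. I would treat the three interaction terms in turn, then assemble.
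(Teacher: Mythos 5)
Your overall strategy is the same as the paper's: regularise (excise near the diagonal), change variables along the flow, expand the Jacobian, integrate by parts recognising the divergence structure, use the tangency condition~\eqref{eq: tangency} to kill the $\partial\Omega$ boundary terms, and then pass to the limit; your bookkeeping of the three interaction terms into $\H^K_E$ on $\partial E\cap\Omega$ and $\A^K_{E,\Omega}$ on $\partial E\cap\Omega^c$ matches the paper. The genuine gap is in the passage to the limit for the \emph{outside} term. You treat the only delicate point as the principal value on $\partial E\cap\Omega$, to be fixed ``by symmetry of $K$'', but the term $\A^K_{E,\Omega}$ is not a principal-value issue and no symmetric cancellation is available for it: $\A^K_{E,\Omega}(x)$ blows up like $\operatorname{dist}(x,\Omega)^{-s}$ as $x\in\partial E\cap\Omega^c$ approaches $\partial\Omega$ (this is exactly Lemma~\ref{lem: estimate K near Omega}), so the regularised quantities, which are bounded, cannot converge uniformly on $\partial E\cap\Omega^c$, and you must prove (i) that $\A^K_{E,\Omega}$ is $\mathscr H^{n-1}$-integrable on $\partial E\cap\Omega^c$ at all --- otherwise the right-hand side of~\eqref{eq: first variation} is not even well defined --- and (ii) that the error committed by the excision in a $2\delta$-tubular neighbourhood of $\partial\Omega$ vanishes as $\delta\searrow0$. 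The paper obtains both from the $\operatorname{dist}^{-s}$ bound combined \emph{quantitatively} with the uniform transversality~\eqref{eq: unif transversality}: after flattening, the distance from $x\in\partial E$ to $\Omega$ is at least $\sin\theta$ times the distance along $\partial E$ to $\partial\Omega$, whence $\int_{\partial E\cap\Omega^c\cap T_{2\delta}}\operatorname{dist}(x,\Omega)^{-s}\,d\mathscr H^{n-1}_x\le C\delta^{1-s}/\sin^s\theta$, giving an $O(\delta^{1-s})$ error. Transversality is thus not merely ensuring that the divergence theorem applies or that $\partial E\cap\partial\Omega$ is negligible; it is the ingredient that makes the singular boundary integral finite, and your proposal contains no substitute for this estimate.

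Two further points. First, to conclude that $\frac{d}{dt}\big\vert_{t=0}$ of the unregularised functional equals the limit of the regularised derivatives, convergence at $t=0$ alone is not enough: the paper proves uniform-in-$t$ convergence of the derivatives on a small interval (using~\cite[Proposition~6.3]{Figalli-Fusco-Maggi-Millot-Morini2015} for the $\H$-part, which is where $\alpha>s$ enters, and the tubular-neighbourhood estimate for the $\A$-part), together with pointwise convergence of the functionals, and your plan does not address this interchange. Second, your dominated-differentiation step relies on a bound of the type $|\nabla K(z)|\lesssim|z|^{-n-s-1}$, but an admissible kernel is only assumed to satisfy $K(z)\le C_K|z|^{-n+(-s)}$ with $K\in C^1$ away from the origin, with no gradient bound near $0$ or at infinity; the paper's cutoff of the kernel both near the origin and at infinity is what makes the differentiation and integration by parts unconditional, at the price of the additional $O(\delta^s)$ far-field error which it then estimates explicitly.
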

	
The proof of Theorem~\ref{PROP1} relies on the
strategy developed in~\cite[Theorem~6.1]{Figalli-Fusco-Maggi-Millot-Morini2015}, namely one first regularises the kernel and then shows that the result obtained passes to the limit.

To implement the passage to the limit, a pivotal step consists in  
understanding the asymptotic behaviour of~$\A_{E,\Omega}^K$ near~$\partial\Omega$. For this, given~$\delta>0$, we consider a~$\delta$-tubular neighbourhood~$T_{\delta}$ of~$\partial\Omega$ defined as
$$ T_{\delta} \coloneqq \bigcup_{x\in\partial\Omega} B_\delta(x).
$$
The result needed for our purposes reads as follows:

\begin{lemma}\label{lem: estimate K near Omega}
	Let~$s\in(0,1)$ and~$K$ be an admissible~$s$-kernel.
	Let~$\Omega$ be an open set in~$\R^n$ of class~$C^{1}$.
	
	Let $\bar\delta>0$ be small enough so that 
	the nearest point projection from~$T_{\bar\delta}$ to~$\partial\Omega$ is well defined.
	
Then, there exists~$\delta_0\in(0,\bar\delta)$ such that, for every~$\delta\in(0,\delta_0]$ and
for every~$x\in T_{\delta}\cap\overline\Omega^c$,
	\begin{equation*}
		\int_\Omega K(x-y)dy\leq C\operatorname{dist}(x,\Omega)^{-s}
	\end{equation*}
	for some~$C>0$ depending only on~$C_K$, $n$ and~$s$.
\end{lemma}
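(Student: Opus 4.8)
The plan is to reduce the estimate to a computation in a flattened half-space, using the $C^1$ regularity of $\partial\Omega$ and the admissibility bound~\eqref{nvcmxwkdefaddmiss}. First I would observe that it suffices to bound $\int_\Omega |x-y|^{-n-s}\,dy$, since $K(x-y)\le C_K|x-y|^{-n-s}$. Write $r\coloneqq\operatorname{dist}(x,\Omega)=\operatorname{dist}(x,\partial\Omega)$, which is positive since $x\in\overline\Omega^c$, and let $p\in\partial\Omega$ be the nearest point projection of $x$, well defined because $x\in T_{\bar\delta}$. The key geometric input is that, after choosing $\delta_0$ small (depending on the $C^1$ modulus of continuity of $\partial\Omega$), the portion of $\Omega$ within a fixed distance of $x$ is contained in a half-space-like region: precisely, for any $\theta\in(0,1)$ there is $\delta_0$ such that for $x\in T_{\delta_0}\cap\overline\Omega^c$ and $y\in\Omega\cap B_{\delta_0'}(x)$ (with $\delta_0'$ a fixed multiple of $\delta_0$) one has $(y-x)\cdot\nu_{\partial\Omega}(p)\ge \theta|y-x|$, i.e.\ $y$ lies in a cone of opening strictly less than $\pi$ around the inward normal direction; equivalently $\Omega\cap B_{\delta_0'}(x)\subset \{z:\ (z-x)\cdot\nu\ge r/2\}$ or a comparable slab-from-below statement. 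This is exactly the standard fact that a $C^1$ hypersurface looks flat at small scales.

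With this set-up the integral splits as $\int_{\Omega\cap B_R(x)}+\int_{\Omega\setminus B_R(x)}$ for a fixed radius $R$ comparable to $\delta_0$. The far piece is harmless: $\int_{\Omega\setminus B_R(x)}|x-y|^{-n-s}\,dy\le \int_{\R^n\setminus B_R(x)}|x-y|^{-n-s}\,dy = c_n R^{-s}$, and since $r\le 2\delta_0\le 2R$ this is bounded by $C r^{-s}$ after absorbing constants (one uses $r\lesssim R$, so $R^{-s}\lesssim$ nothing directly — rather one simply notes $R$ is a fixed constant, $r$ is small, so $R^{-s}\le (R/r)^{s}\,r^{-s}$ fails; instead I would keep $R$ fixed and bound the far part by the absolute constant $c_nR^{-s}$, then note $c_nR^{-s}\le c_nR^{-s}r^{-s}\cdot r^{s}$... ). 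To avoid this awkwardness cleanly, I would instead just choose $R=r$ for the near/far split when $r$ is the relevant small scale, or more simply bound the far part by $\int_{|x-y|\ge r}|x-y|^{-n-s}dy=c_nr^{-s}$ directly, which already has the desired form. The near piece $\int_{\Omega\cap B_r(x)}|x-y|^{-n-s}\,dy$ is the one requiring the flatness: since $\Omega\cap B_r(x)$ is contained in $\{y:\ |y-x|\ge r\}$ up to the transversal cone correction — actually $\operatorname{dist}(x,\Omega)=r$ forces $\Omega\cap B_r(x)=\varnothing$, so this term vanishes outright. Hence the genuine split is $\int_{\Omega\cap (B_{\delta_0'}(x)\setminus B_r(x))}+\int_{\Omega\setminus B_{\delta_0'}(x)}$; the second is $\le c_n(\delta_0')^{-s}$, an absolute constant which is $\le C r^{-s}$ since $r<\delta_0'$; the first, using $\Omega\cap B_{\delta_0'}(x)\subset\{z:(z-x)\cdot\nu\ge r/2\}$ from the $C^1$-flatness, is bounded by $\int_{\{w:\,w_n\ge r/2,\ |w|\le \delta_0'\}}|w|^{-n-s}\,dw\le \int_{\{w_n\ge r/2\}}|w|^{-n-s}\,dw=C r^{-s}$ by scaling.

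Putting the two pieces together gives $\int_\Omega K(x-y)\,dy\le C_K\big(c_n(\delta_0')^{-s}+Cr^{-s}\big)\le C' r^{-s}$ with $C'$ depending only on $C_K$, $n$, $s$ (the dependence on the $C^1$ character of $\partial\Omega$ being hidden in the fixed, $\Omega$-dependent-but-admissible choice of $\delta_0$, which only enters through the absolute constant term, itself dominated by $r^{-s}$). The main obstacle is making the flatness step quantitative in a clean way: one must pin down $\delta_0$ so that for all $x\in T_{\delta_0}\cap\overline\Omega^c$ the inclusion $\Omega\cap B_{\delta_0'}(x)\subset\{(z-x)\cdot\nu_{\partial\Omega}(\pi(x))\ge r/2\}$ genuinely holds — this uses uniform continuity of the normal (equivalently, a uniform $C^1$ modulus), which is where openness/$C^1$-regularity of $\Omega$ and the well-definedness of the nearest-point projection on $T_{\bar\delta}$ are both used. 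Everything else is the elementary scaling computation $\int_{\{w_n\ge a\}}|w|^{-n-s}dw=c_{n,s}a^{-s}$.
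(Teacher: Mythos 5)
The step that would fail is precisely what you single out as the ``key geometric input''. Neither form of it is true uniformly in $x$. The cone version $(y-x)\cdot\nu_{\partial\Omega}(p)\ge\theta|y-x|$ for all $y\in\Omega\cap B_{\delta_0'}(x)$ fails even when $\Omega$ is an exact half-space: with $\Omega=\{y_n>0\}$, $x=-re_n$, $p=0$, the point $y=(t,0,\dots,0,\eta)$ with $0<\eta\ll r\ll t<\delta_0'$ lies in $\Omega\cap B_{\delta_0'}(x)$, yet $(y-x)\cdot e_n=\eta+r$ is much smaller than $\theta|y-x|\approx\theta t$. The half-space version $\Omega\cap B_{\delta_0'}(x)\subset\{z:(z-x)\cdot\nu\ge r/2\}$ does hold for an exact half-space, but not for a general $C^1$ domain: in graph coordinates at $p$ the boundary may dip below the tangent hyperplane by an amount of order $\omega(|y'|)\,|y'|$, where $\omega$ is the modulus of continuity of the normal (e.g.\ $\varphi(y')=-|y'|^{1+\alpha}$). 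Since $\delta_0$ and hence $\delta_0'$ and $\omega(\delta_0')$ are fixed \emph{before} $x$ is chosen, while $r=\operatorname{dist}(x,\Omega)$ can be arbitrarily small, as soon as $r\ll\omega(\delta_0')\,\delta_0'$ there are points of $\Omega\cap B_{\delta_0'}(x)$ with $(z-x)\cdot\nu<r/2$. So no choice of $\delta_0$ makes the inclusion you rely on hold uniformly, and the bound for the ``near piece'' as you wrote it is not justified.

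The good news is that the estimate needs no flatness at all, and you in fact wrote the correct argument in passing before abandoning it: every $y\in\Omega$ satisfies $|x-y|\ge\operatorname{dist}(x,\Omega)=:r>0$ (positive because $x\in\overline\Omega^c$), hence, by the admissibility bound $K(z)\le C_K|z|^{-n-s}$,
\begin{equation*}
\int_\Omega K(x-y)\,dy\;\le\;C_K\int_{\{|x-y|\ge r\}}\frac{dy}{|x-y|^{n+s}}\;=\;\frac{C_K\,\omega_n}{s}\,r^{-s},
\end{equation*}
which is the claimed inequality with $C$ depending only on $C_K$, $n$ and $s$; no regularity of $\Omega$ and no smallness of $\delta$ enter. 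Your observations that $\Omega\cap B_r(x)=\varnothing$ and that the split at radius $r$ ``already has the desired form'' are exactly this one-line proof, so the fix is simply to delete the split at scale $\delta_0'$ together with the flatness claim. For comparison, the paper argues differently: it rescales by $d_x=\operatorname{dist}(x,\Omega)$ after a rigid motion and uses that $d_x^{-1}P_x\Omega$ converges to a half-space, reducing the bound to the half-space integral $\int_{\{w_n<0\}}|e_n-w|^{-n-s}\,dw$; that route genuinely uses the $C^1$ regularity (and gives a sharper constant), but the elementary bound above already suffices for the statement as given.
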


We will prove Theorem~\ref{PROP1} and
Lemma~\ref{lem: estimate K near Omega} in Section~\ref{sec: EL} below.

\begin{example}
	The simplest nontrivial example of a free boundary~$s$-minimal surface is the hyperplane~$\partial\{y_n<0\}$ in the unit ball. The interior equation~\eqref{eq: s-min surf} is satisfied in every compact set, while the free boundary condition~\eqref{eq: free boundary} holds by symmetry. 
\end{example}

\subsection{The free boundary condition in the limit as~$s\nearrow 1$}
As a natural next step, we show that the nonlocal free boundary condition ``converges'', in some suitable sense, to the local one, as~$s\nearrow 1$. 

For this, let us recall that the constant~$c_{n,s}$ is explicitly given by 
\begin{equation}\label{eq: cns}
	c_{n,s}\coloneqq \frac{2^{2+2s}\Gamma\left(\frac{n+s}2\right)}{\pi^{n/2}\Gamma(2-s)}s(1-s)
\end{equation}
and we have the limits
\begin{equation}\label{eq: cns to 1}
	\lim_{s\searrow 0}\frac{c_{n,s}}{s}=\frac{8}{\omega_n}\qquad{\mbox{and}} \qquad
	\lim_{s\nearrow 1}\frac{c_{n,s}}{1-s}=\frac{16n}{\omega_n},
\end{equation}
see, for instance, \cite[\S1.6.2]{Abatangelo-Dipierro-Valdinoci2025}.
Here above~$\omega_n$ denotes the surface measure of the~$(n-1)$-dimensional sphere~$\partial B_1\subset\R^n$.
\medskip

We show that the free boundary condition~\eqref{eq: free boundary}, defined on~$\partial E\cap\Omega^c$, concentrates on~$\partial E\cap\partial\Omega$ as~$s\nearrow 1$, according to the following statement:

\begin{theorem}\label{prop: s to 1}
Let~$E$ and~$\Omega$ be open sets of class~$C^{1,1}$ intersecting uniformly transversally.
Let~$\mathcal{X}\in C^{\infty}_c(\R^n,\R^n)$ be a vector field satisfying~\eqref{eq: tangency}.

Then,
	\begin{equation*}
		\lim_{s\nearrow1}\int_{\partial E\cap \Omega^c}\A_{E,\Omega}^s(x)\mathcal{X}(x)\cdot \nu_{\partial E}(x)d\mathscr{H}^{n-1}_x= \int_{\partial E\cap \partial \Omega}g(\psi_{x'})\mathcal{X}(x')\cdot \nu_{\partial E}(x')d\mathscr{H}^{n-2}_{x'},
	\end{equation*}
	where
	\begin{equation*}
	g(\psi)\coloneqq \int_{{\{Z_1<0\}}\atop{
\{Z_n\in((Z_1-1)/\tan\psi,(1-Z_1)/\tan\psi
)\}}}\frac{dZ}{|Z-e_1|^{n+1}} 
\end{equation*}
and $\psi_{x'}$ is the intersection angle between the affine hyperplanes~$T_{x'}(\partial E)$ and~$T_{x'}(\partial \Omega)$. 
\end{theorem}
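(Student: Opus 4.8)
The plan is to exploit that, as $s\nearrow1$, the density $\A^s_{E,\Omega}$ concentrates near $\partial\Omega$, forcing the surface integral over $\partial E\cap\Omega^c$ to collapse onto $\partial E\cap\partial\Omega$, with the profile $g$ emerging from a half-space blow-up. First I would discard the contribution of the points far from $\partial\Omega$: if $\operatorname{dist}(x,\Omega)\ge d_0$ then $\Omega\subseteq\{|z-x|\ge d_0\}$, so $|\A^s_{E,\Omega}(x)|\le c_{n,s}\int_{\{|z|\ge d_0\}}|z|^{-n-s}\,dz=\omega_n\,c_{n,s}\,d_0^{-s}/s\to0$ since $c_{n,s}\to0$; as $\mathcal X$ is compactly supported and $\partial E$ has locally finite $\mathscr H^{n-1}$-measure (being of class $C^{1,1}$), the part of the integral with $\operatorname{dist}(x,\Omega)\ge d_0$ is $O(c_{n,s})$. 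Moreover $x\mapsto\operatorname{dist}(x,\partial\Omega)$ is continuous, and strictly positive on the compact set obtained from $\partial E\cap\operatorname{supp}\mathcal X$ by removing the $\eta$-neighbourhood of $\partial E\cap\partial\Omega$; hence for each $\eta>0$ there is $d_0(\eta)>0$ so that $\{x\in\partial E\cap\Omega^c:\operatorname{dist}(x,\Omega)<d_0(\eta)\}$ is contained in that $\eta$-neighbourhood, which in turn lies in $T_{2\eta}$ once $\eta$ is small. Thus, up to an error vanishing as $s\nearrow1$ for each fixed $\eta$, only the portion of $\partial E\cap\Omega^c$ inside the $\eta$-neighbourhood of $\partial E\cap\partial\Omega$ matters.

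On that portion I would flatten and pass to a fibred integral. By the $C^{1,1}$-regularity of $E,\Omega$ and the uniform transversality~\eqref{eq: unif transversality} (which keeps $\sin\psi_{x'}$ bounded below), cover $\partial E\cap\partial\Omega\cap\operatorname{supp}\mathcal X$ by finitely many charts where $\partial\Omega$ and $\partial E$ are $C^{1,1}$ graphs meeting at angle $\psi_{x'}$, and in each chart parametrise $\partial E\cap\Omega^c$ by $(\rho,x'')$, with $\rho=\operatorname{dist}(x,\Omega)\in(0,\eta')$ and $x''$ ranging over a chart of $\partial E\cap\partial\Omega$. Applying the coarea formula on $\partial E$ to the function $\operatorname{dist}(\cdot,\partial\Omega)$, whose tangential gradient on $\partial E$ has norm $\sin\psi_{x(\rho,x'')}=\sin\psi_{x''}+O(\rho)$, rewrites the local contribution as $\int d\mathscr H^{n-2}_{x''}\int_0^{\eta'}\A^s_{E,\Omega}(x(\rho,x''))\,(\mathcal X\cdot\nu_{\partial E})(x(\rho,x''))\,\frac{J(\rho,x'')}{\sin\psi_{x(\rho,x'')}}\,d\rho$, with $J\to1$ as $\rho\to0$.

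The core computation is the blow-up of $\A^s_{E,\Omega}$ at $x=x(\rho,x'')$ at scale $\rho$. Dilating by $\rho$, the point becomes $e_1$, the set $\Omega$ converges (as $\rho\to0$) to the half-space $\{Z_1<0\}$, at distance $1$ from $e_1$, and $E$ converges to a half-space through $e_1$ whose boundary meets $\{Z_1=0\}$ at angle $\psi_{x''}$. Writing $\{Z_1<0\}$ as the disjoint union of the region $\{(Z_1-1)/\tan\psi<Z_n<(1-Z_1)/\tan\psi\}$ appearing in the definition of $g(\psi)$ and of its complement, and observing that $\chi_{E^c}-\chi_E$ integrates to zero over the complement by the reflection $Z_n\mapsto-Z_n$ (which preserves both $\{Z_1<0\}$ and $|Z-e_1|$), the half-space computation yields $c_{n,s}\,\rho^{-s}\,g_s(\psi_{x''})$ up to a sign fixed by the orientation of $\nu_{\partial E}$, where $g_s$ is the integral defining $g$ with the exponent $n+1$ replaced by $n+s$, so that $g_s(\psi)\to g(\psi)$ as $s\nearrow1$ by dominated convergence. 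The $C^{1,1}$-bounds control the remainder by a curved-layer integral which, after multiplication by $c_{n,s}$ and integration over $\rho\in(0,\eta')$, contributes only $O(\eta')$ to the whole.

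To conclude, I would use the elementary fact that the measure $c_{n,s}\rho^{-s}\,d\rho$ on $(0,\eta')$ has total mass $\frac{c_{n,s}}{1-s}(\eta')^{1-s}\to\frac{16n}{\omega_n}$ by the second limit in~\eqref{eq: cns to 1} and concentrates at $\rho=0$, so that $\int_0^{\eta'}c_{n,s}\rho^{-s}\Phi(\rho,x'')\,d\rho\to\frac{16n}{\omega_n}\Phi(0,x'')$ for every bounded, $\rho$-continuous $\Phi$. Applying this, carrying along the factors $g_s(\psi_{x''})$, $1/\sin\psi_{x(\rho,x'')}$, $J$ and $(\mathcal X\cdot\nu_{\partial E})(x(\rho,x''))$, interchanging the $x''$-integral and the limit by dominated convergence, and using the tangency~\eqref{eq: tangency} (so that the $\nu_{\partial\Omega}$-component of $\mathcal X$, being $O(\rho)$ along $\partial E$ near $\partial\Omega$, contributes only a term of order $c_{n,s}$), the various elementary constants combine and one gets, for fixed $\eta$, a limit equal to $\int_{\partial E\cap\partial\Omega}g(\psi_{x'})\,\mathcal X(x')\cdot\nu_{\partial E}(x')\,d\mathscr H^{n-2}_{x'}$ up to an $O(\eta)$ error; letting $\eta\to0$ gives the claim. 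I expect the delicate step to be exactly the uniform control of the blow-up remainder as $s\nearrow1$: the naive estimate of the curved-layer integral carries a factor $(1-s)^{-1}$, which is only absorbed by $c_{n,s}\sim(1-s)$ after integration in $\rho$, so the argument must be organised with the limits taken in the order $s\nearrow1$ first, then $\eta\to0$, and it is here that the uniform decay estimate of Lemma~\ref{lem: estimate K near Omega} together with the $C^{1,1}$-quadratic control are essential.
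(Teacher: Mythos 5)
Your proposal follows, in essence, the same skeleton as the paper's proof: discard the contribution of points far from $\partial\Omega$ using $c_{n,s}\to0$, fibre the surface integral over the edge $\partial E\cap\partial\Omega$, identify the blow-up of $\A^s_{E,\Omega}$ at scale $\rho=\operatorname{dist}(x,\Omega)$ with a wedge integral via the reflection $Z_n\mapsto-Z_n$ (the same trick the paper uses to reduce $\mathrm{I}_1-\mathrm{I}_2$ to the region defining $g$), and let the measure $c_{n,s}\rho^{-s}\,d\rho$ concentrate at $\rho=0$, taking $s\nearrow1$ before shrinking the localization parameter --- this concentration is exactly what the paper's substitution $\tau=\zeta_1^{1-s}$ implements. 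The genuine difference is technical: you flatten by applying the coarea formula on $\partial E$ to $\operatorname{dist}(\cdot,\Omega)$, with Jacobian $\sin\psi_{x''}+O(\rho)$ controlled by \eqref{eq: unif transversality}, whereas the paper straightens $E$ and $\Omega$ by local $C^{1,1}$ diffeomorphisms, quantifies the induced kernel error in Lemma~\ref{saxjoml3rf}, and sums over $O(r^{2-n})$ charts. Your variant is viable, but the ``delicate step'' you flag (uniform-in-$s$ control of the blow-up remainder) is precisely what Lemma~\ref{saxjoml3rf} supplies through an explicit bound of the form $\frac{Cr}{s}\bigl(X_1^{1-s}+\frac{(Cr)^{1-s}-X_1^{1-s}}{1-s}\bigr)$; your sketch would need an estimate of that quality rather than the qualitative ``$O(\eta')$ after integration''.

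The concrete gap is the final assertion that ``the various elementary constants combine'' to give exactly $\int_{\partial E\cap\partial\Omega}g(\psi_{x'})\,\mathcal{X}\cdot\nu_{\partial E}\,d\mathscr{H}^{n-2}_{x'}$. Your own bookkeeping produces two factors that nothing in your argument cancels: the limiting mass $\lim_{s\nearrow1}\frac{c_{n,s}}{1-s}$ of the concentrating measure (which by \eqref{eq: cns to 1} is a dimensional constant, not $1$), and the coarea Jacobian $1/\sin\psi_{x''}$; a constant cannot cancel a $\psi$-dependent factor. This is not a presentational issue: in the exactly flat model $\Omega=\{x_1<0\}$, $E=\{\omega\cdot x<0\}$ with $\omega=(-\sin\vartheta,0,\dots,0,\cos\vartheta)$, one has identically $\A^s_{E,\Omega}(x)=c_{n,s}\,x_1^{-s}\,g_s\bigl(\tfrac{\pi}{2}-\vartheta\bigr)$ on $\partial E\cap\{x_1>0\}$ (with $g_s$ the wedge integral with exponent $n+s$), while $d\mathscr{H}^{n-1}=\frac{dx_1\,dx''}{\cos\vartheta}$, so your scheme yields
\begin{equation*}
\lim_{s\nearrow1}\int_{\partial E\cap\Omega^c}\A^s_{E,\Omega}(x)\,\mathcal{X}(x)\cdot\nu_{\partial E}(x)\,d\mathscr{H}^{n-1}_x
=\Bigl(\lim_{s\nearrow1}\tfrac{c_{n,s}}{1-s}\Bigr)\,\frac{g(\psi)}{\sin\psi}\int_{\partial E\cap\partial\Omega}\mathcal{X}\cdot\nu_{\partial E}\,d\mathscr{H}^{n-2},
\qquad \psi=\tfrac{\pi}{2}-\vartheta,
\end{equation*}
and not the right-hand side as stated. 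These are exactly the two factors that the paper's computation absorbs (when $d\mathscr{H}^1_\zeta$ along $\{\zeta_2=\zeta_1\tan\vartheta\}$ is replaced by $d\zeta_1$, and $c_{n,s}/(1-s)$ by $1$ after the substitution $\tau=\zeta_1^{1-s}$), so the qualitative content --- concentration on $\partial E\cap\partial\Omega$ with a density vanishing exactly when $\psi_{x'}=\pi/2$, hence the Corollary --- is unaffected; but as a proof of the stated identity your closing step is not justified. You must either carry $\lim\frac{c_{n,s}}{1-s}$ and $1/\sin\psi_{x'}$ explicitly into the limiting density, or show precisely where and why they disappear; asserting that they cancel is the one place where your argument, taken literally, would fail.
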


An immediate consequence of Theorem \ref{prop: s to 1} is the following.

\begin{corollary}
	Under the same assumptions of Theorem \ref{prop: s to 1}, 
	\begin{equation*}
		\lim_{s\nearrow1}\int_{\partial E\cap \Omega^c}\A_{E,\Omega}^s(x)\mathcal{X}(x)\cdot \nu_{\partial E}(x)d\mathscr{H}^{n-1}_x=0
	\end{equation*}
	if and only if $\partial E$ meets $\partial\Omega$ orthogonally in the support of~$\mathcal{X}$.
\end{corollary}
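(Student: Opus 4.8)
The plan is to deduce the corollary directly from Theorem~\ref{prop: s to 1}, so that everything reduces to the behaviour of the weight $g$. By that theorem,
\[
\lim_{s\nearrow1}\int_{\partial E\cap \Omega^c}\A_{E,\Omega}^s(x)\,\mathcal{X}(x)\cdot \nu_{\partial E}(x)\,d\mathscr{H}^{n-1}_x
=\int_{\partial E\cap \partial \Omega}g(\psi_{x'})\,\mathcal{X}(x')\cdot \nu_{\partial E}(x')\,d\mathscr{H}^{n-2}_{x'},
\]
so the statement turns into a pointwise assertion about $g(\psi_{x'})$ along the $(n-2)$-dimensional contact set $\partial E\cap\partial\Omega$. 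The first step is therefore to establish the properties of $g$ we need: $g\ge0$, $g$ is continuous, and, on the relevant range $(0,\pi/2]$ of angles, $g(\psi)=0$ if and only if $\psi=\pi/2$. Non-negativity is immediate since the integrand $|Z-e_1|^{-n-1}$ is positive. For the zero set, note that the domain of integration is $\{Z_1<0\}$ intersected with the slab $\{Z_n\in((Z_1-1)/\tan\psi,(1-Z_1)/\tan\psi)\}$: on $\{Z_1<0\}$ this slab has $Z_n$-width $2(1-Z_1)/\tan\psi>0$ whenever $\psi\in(0,\pi/2)$, so the region has positive Lebesgue measure and $g(\psi)>0$, while as $\psi\nearrow\pi/2$ the slab collapses to the Lebesgue-null hyperplane $\{Z_n=0\}$, so $g(\pi/2)=0$. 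Continuity, in particular at $\psi=\pi/2$, follows from dominated convergence, an integrable majorant being $|Z-e_1|^{-n-1}$ times the indicator of a fixed, slightly larger slab, whose integral over $\{Z_1<0\}$ is finite because that region stays at positive distance from $e_1$ and the kernel decays like $|Z|^{-n-1}$ at infinity. Here $\psi_{x'}\in(0,\pi/2]$ is the unoriented angle between the hyperplanes $T_{x'}(\partial E)$ and $T_{x'}(\partial\Omega)$, and uniform transversality~\eqref{eq: unif transversality} keeps it bounded away from $0$ along $\partial E\cap\partial\Omega$; hence wherever $\psi_{x'}\le\pi/2-\eta$ on the contact set one has $g(\psi_{x'})\ge c(\eta)>0$.

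Granting this, the ``if'' implication is immediate. If $\partial E$ meets $\partial\Omega$ orthogonally at every point of $\operatorname{supp}\mathcal{X}$, then for $x'\in\partial E\cap\partial\Omega\cap\operatorname{supp}\mathcal{X}$ one has $\psi_{x'}=\pi/2$ and hence $g(\psi_{x'})=0$, while for $x'\notin\operatorname{supp}\mathcal{X}$ one has $\mathcal{X}(x')=0$; in either case the integrand $g(\psi_{x'})\,\mathcal{X}(x')\cdot\nu_{\partial E}(x')$ vanishes $\mathscr{H}^{n-2}$-almost everywhere on $\partial E\cap\partial\Omega$, so the right-hand side of the identity above vanishes, and with it the limit.

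For the ``only if'' implication I argue by contraposition, exhibiting a field that witnesses a nonzero limit. Suppose $\partial E$ fails to meet $\partial\Omega$ orthogonally at some $\bar x\in\partial E\cap\partial\Omega\cap\operatorname{supp}\mathcal{X}$, so that $\psi_{\bar x}<\pi/2$ and $g(\psi_{\bar x})>0$. Since the tangent planes of the $C^{1,1}$ hypersurfaces $\partial E$ and $\partial\Omega$ vary continuously along their transversal intersection, $x'\mapsto\psi_{x'}$ is continuous, and therefore $g(\psi_{x'})\ge c>0$ on a relatively open neighbourhood $V\subset\partial E\cap\partial\Omega$ of $\bar x$. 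By uniform transversality the component of $\nu_{\partial E}$ tangent to $\partial\Omega$ does not degenerate along $\partial E\cap\partial\Omega$, so (using $\partial\Omega\in C^{1,1}$) one can construct an admissible $\mathcal{X}\in C^\infty_c(\R^n,\R^n)$ satisfying~\eqref{eq: tangency}, with $\operatorname{supp}\mathcal{X}\cap\partial E\cap\partial\Omega\subset V$, with $\mathcal{X}\cdot\nu_{\partial E}\ge0$ on $\partial E\cap\partial\Omega$ and $\mathcal{X}\cdot\nu_{\partial E}>0$ on a nonempty relatively open subset of $V$. For this field the right-hand side of the identity above is the integral of a non-negative quantity that is positive on a set of positive $\mathscr{H}^{n-2}$-measure, hence strictly positive; so the limit does not vanish. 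This establishes the contrapositive, hence the ``only if'' part: vanishing of the limit forces $\partial E$ to meet $\partial\Omega$ orthogonally throughout $\operatorname{supp}\mathcal{X}$.

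The delicate point, which I expect to require the most care, is precisely the construction of the witness field in the ``only if'' direction: one needs a genuine $C^\infty_c$ vector field that is tangent to all of $\partial\Omega$ (not just at $\bar x$), localized near $\bar x$, and with the sign of $\mathcal{X}\cdot\nu_{\partial E}$ controlled on the relevant part of the contact set. This is exactly where uniform transversality enters—ensuring that the component of $\nu_{\partial E}$ tangent to $\partial\Omega$ stays bounded away from $0$—and where the $C^{1,1}$ regularity of $\partial\Omega$ is used. The accompanying analysis of $g$ is elementary, the only subtle point being the collapse of the integration domain as $\psi\nearrow\pi/2$.
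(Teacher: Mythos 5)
Your first paragraph and the ``if'' direction are exactly the content the paper has in mind: the corollary is stated as an immediate consequence of Theorem~\ref{prop: s to 1}, with no separate proof, and the whole point is that the limit equals $\int_{\partial E\cap\partial\Omega}g(\psi_{x'})\,\mathcal{X}(x')\cdot\nu_{\partial E}(x')\,d\mathscr{H}^{n-2}_{x'}$ while $g(\psi)>0$ for the transversal non-orthogonal angles and $g(\pi/2)=0$ (positive-width slab versus collapse of the slab, with \eqref{eq: unif transversality} keeping $\psi_{x'}$ away from $0$). That part of your proposal is correct and matches the intended argument.

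The ``only if'' direction is where you depart from the statement, and this is a genuine gap. The corollary fixes one field $\mathcal{X}$ (the one in the hypotheses of Theorem~\ref{prop: s to 1}); your contrapositive discards it and manufactures a \emph{new} admissible field localized near the non-orthogonal point, so what you prove is the quantified assertion ``non-orthogonality implies \emph{some} admissible field has nonzero limit'', which is not the negation of ``the limit vanishes for the given $\mathcal{X}$''. Indeed, read literally for a fixed field the ``only if'' cannot be proved at all: in $\R^3$ take $\Omega=B_1$, $E=\{x_3<c\}$ with $c\in(0,1)$ and $\mathcal{X}$ a cut-off rotation about the $x_3$-axis; then $\mathcal{X}$ satisfies \eqref{eq: tangency}, $\mathcal{X}\cdot\nu_{\partial E}\equiv0$, hence the limit is zero although the intersection is nowhere orthogonal. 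So the statement has to be understood, as the paper implicitly does, at the level of the density $g(\psi_{x'})$ (equivalently, vanishing of the limit for \emph{all} admissible fields), in which case the proof is just your analysis of $g$ and no witness field is needed. Moreover, the construction you defer is not routine at this regularity: a $C^\infty$ field satisfying \eqref{eq: tangency} leaves $\partial\Omega$ invariant, so its smooth integral curves through contact points lie in $\partial\Omega$; when $\partial\Omega$ is only $C^{1,1}$ this obstructs prescribing the direction of $\mathcal{X}$ on $\partial\Omega$ (for instance no smooth tangent field to $\{x_n=x_{n-1}|x_{n-1}|\}$ can have a nonzero $e_{n-1}$-component at the origin, since its integral curve would force $t\mapsto\gamma_{n-1}(t)|\gamma_{n-1}(t)|$ to be smooth), and the direction you need---the projection of $\nu_{\partial E}$ onto $T\partial\Omega$, modulo directions tangent to the contact set---is essentially forced, so mollification-and-cutoff does not settle it. Either reinterpret the corollary as the paper does and drop the witness field, or acknowledge that its construction is the missing (and nontrivial) step of your quantified version.
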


We refer to Section~\ref{sec: lim s1} below for the proof of Theorem~\ref{prop: s to 1}. 

\subsection{A free boundary $s$-minimal surface \emph{without} free boundary}
In the classical case, if the reference domain~$\Omega$ is bounded, the boundary of a smooth free boundary minimal surface~$E$ must always meet the boundary of~$\Omega$ (unless~$E$ is either void or contains the whole domain). Indeed, otherwise, assuming that~$0\in\Omega$, one could pick a point~$p\in\partial E$ that maximises the distance from the origin and note that either~$E\subseteq B_r$
or~$E^c\subseteq B_r$, with~$r\coloneqq |p|$. The mean curvature of~$\partial E$ at~$p$ would then be bounded below by~$\frac1r$ (when~$E\subseteq B_r$) or above by~$-\frac1r$ (when~$E^c\subseteq B_r$), thus contradicting the zero mean curvature condition.

Alternatively, one can observe that there are no closed minimal hypersurfaces~$\Sigma$ in~$\R^n$, since
the coordinate functions~$x_1,\dots,x_n\colon\Sigma\to\R$ are harmonic on~$\Sigma$
and therefore constant if~$\Sigma$ has no boundary.
This entails that a free boundary minimal hypersurface in a compact manifold~~$\Omega$ always meets the boundary.

In stark contrast  with the local case, we show
that, in the asymptotic regime~$s\sim 0$, there exist free boundary~$s$-minimal hypersurfaces without any boundary contact set.
Moreover, such surfaces are radially symmetric, which is again an exclusively nonlocal feature. The precise statement is the following:

\begin{theorem}\label{thm: rad symm sFBMS}
	There exists~$s_\circ\in(0,1)$ such that for every~$s\in(0,s_\circ)$ there exist radii~$0<r_1<1<r_2$ such that~$\partial(B_{r_2}\setminus B_{r_1})$ is a free boundary~$s$-minimal hypersurface in~$B_1$.
\end{theorem}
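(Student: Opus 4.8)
The idea is to look for a free boundary $s$-minimal surface in $B_1$ of the radially symmetric form $E=B_{r_2}\setminus B_{r_1}$ with $0<r_1<1<r_2$, so that $\partial E$ consists of two spheres, one inside $\Omega=B_1$ (namely $\partial B_{r_1}$) and one outside (namely $\partial B_{r_2}$). By the Euler--Lagrange characterisation in Theorem~\ref{PROP1}, after stripping off the arbitrary tangential vector field, such a set is a free boundary $s$-minimal surface provided that the nonlocal mean curvature $\H^s_E$ vanishes on $\partial B_{r_1}$ (this is equation~\eqref{eq: s-min surf}) and the free boundary quantity $\A^s_{E,B_1}$ vanishes on $\partial B_{r_2}$ (this is equation~\eqref{eq: free boundary}). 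Both quantities are constant along each sphere by rotational symmetry, so these are two scalar equations in the two unknowns $(r_1,r_2)$. I would first set up these two functions explicitly: write
$$
F_1(r_1,r_2)\coloneqq \H^s_{B_{r_2}\setminus B_{r_1}}(r_1 e_1),
\qquad
F_2(r_1,r_2)\coloneqq \A^s_{B_{r_2}\setminus B_{r_1},B_1}(r_2 e_1),
$$
and try to solve $F_1=F_2=0$.

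The key observation making the problem tractable is the asymptotic regime $s\searrow 0$. As $s\to 0$ the kernel $c_{n,s}|z|^{-n-s}$, up to the normalisation $c_{n,s}/s\to 8/\omega_n$ recorded in~\eqref{eq: cns to 1}, becomes (in a distributional sense on annular regions) comparable to a constant times $|z|^{-n}$, and the renormalised integrals converge. More precisely, I would decouple the two equations in the limit: as $s\searrow 0$ the interaction of $r_1 e_1$ with the far sphere $\partial B_{r_2}$ contributes a term that is uniformly small (it involves only points at distance $\geq r_2-r_1$, so no singularity and a factor from $c_{n,s}\to 0$), hence to leading order $F_1=0$ is a self-contained equation for the inner sphere $\partial B_{r_1}$ sitting inside a full ball of radius $r_2$ — and a natural candidate is that the balance forces $r_1<1$. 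Likewise $F_2=0$, which only sees interactions with $\Omega=B_1$, decouples as an equation primarily in $r_2$ relative to the fixed ball $B_1$, again nonsingular since $r_2>1$. Rescaling the $F_1$ equation by $r_1$ and the $F_2$ equation suitably, one obtains in the limit $s\to 0$ two ``profile'' equations; I would show each has a solution with the required inequality (one solution $<1$, one $>1$) strictly, and that the relevant partial derivative is nonzero there.

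The main technical obstacle is the last step: running the implicit function theorem to conclude that for all small $s>0$ the exact system $F_1(r_1,r_2)=F_2(r_1,r_2)=0$ still has a solution near the limiting one. This requires (a) showing that $(F_1,F_2)$ depend smoothly — or at least $C^1$ — on $(r_1,r_2,s)$ up to $s=0$ after the correct renormalisation, which involves controlling the principal value integral defining $\H^s$ uniformly near $s=0$ and checking the singular part of $\partial B_{r_1}$ contributes a continuous term (here one uses that a sphere is smooth, so the principal value is classical and its $r_1$-derivative is controlled); and (b) computing the $2\times 2$ Jacobian of $(F_1,F_2)$ with respect to $(r_1,r_2)$ at the limit point and showing it is invertible. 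Because of the decoupling described above, this Jacobian is, to leading order in $s$, lower (or upper) triangular, so its invertibility reduces to the two one-variable nondegeneracy statements already established. Once the implicit function theorem applies, for $s\in(0,s_\circ)$ with $s_\circ$ small we get $r_1(s)<1<r_2(s)$, and $\partial(B_{r_2(s)}\setminus B_{r_1(s)})$ is the desired free boundary $s$-minimal hypersurface in $B_1$ with empty contact set $\partial E\cap\partial B_1$.
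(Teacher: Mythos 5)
Your reduction to the two scalar equations $F_1=0$ (vanishing nonlocal mean curvature on the inner sphere, which is $\partial E\cap B_1$) and $F_2=0$ (vanishing of $\A^s_{E,B_1}$ on the outer sphere, which is $\partial E\cap\overline{B_1}^c$) is correct and matches the structure of the argument. However, the core of your plan --- decoupling the system in the limit $s\searrow 0$ at a \emph{finite} configuration $0<\bar r_1<1<\bar r_2$ and then running the implicit function theorem --- contains a genuine gap. The claim that the interaction of $r_1e_1$ with the far region is ``uniformly small'' because the points are at distance at least $r_2-r_1$ and $c_{n,s}\to 0$ is false: the exterior of $B_{r_2}$ has infinite volume and $\int_{\{|y-x|\ge d\}}|x-y|^{-n-s}\,dy=\omega_n/(s d^s)$, so this tail term is of size $c_{n,s}/s$, which by \eqref{eq: cns to 1} converges to a positive constant (namely $8$). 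Consequently, for \emph{any} fixed radii $0<r_1<r_2$, one has $\H^s_{B_{r_2}\setminus B_{r_1}}(r_1e_1)\to 8\neq 0$ as $s\searrow 0$: the limiting ``profile equation'' for $F_1$ has no solution, so there is no limiting point at which to anchor the implicit function theorem, and the triangular-Jacobian scheme collapses. In fact the interior equation can only be balanced when the ratio $r_2/r_1$ diverges as $s\searrow 0$ (roughly like $2^{1/s}$, since the balance reads $(r_2/r_1)^s\approx 2$); this is precisely the content of Lemma~\ref{lem: R*} in the paper, where the critical ratio $R_*(s)$ is shown to tend to $+\infty$. Your treatment of $F_2$ has the complementary degeneracy: since the integration is over the bounded set $B_1$ and $r_2>1$, the quantity $\A^s_{E,B_1}(r_2e_1)$ tends to $0$ for \emph{every} admissible pair of radii once the factor $c_{n,s}$ is kept, so the equation must be renormalised (divide by $c_{n,s}$) before any limiting statement is meaningful.

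For contrast, the paper avoids any perturbative/IFT machinery and works at fixed $s$ with two intermediate value arguments. First, by scale invariance the interior equation only constrains the ratio of the radii: the function $f_s(R)=\H^s_{B_R\setminus B_1}(e_1)$ is continuous, positive as $R\searrow1$ and negative as $R\to+\infty$, so there is $R_*(s)>1$ with $f_s(R_*)=0$, and hence $\H^s_{B_{R_*r}\setminus B_r}=0$ on $\partial B_r$ for every $r>0$. Second, with the ratio fixed, the free boundary condition \eqref{eq: free boundary} on $\partial B_{R_*r}$ becomes a single equation $g_s(r)=0$ in the dilation parameter $r\in(1/R_*,1)$, solved again by continuity and a sign change at the endpoints; the smallness of $s$ enters only here, through $R_*(s)\to+\infty$ (one needs $R_*>3$ to get the negative sign as $r\searrow 1/R_*$). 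If you want to salvage an asymptotic approach along your lines, you would have to rescale the unknowns in an $s$-dependent way (e.g.\ work with $(r_2/r_1)^{-s}$ as a variable) rather than pass to a naive limit with bounded radii.
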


Theorem~\ref{thm: rad symm sFBMS} will be established in Section~\ref{sec: FBWFB}.

\subsection{The volume condition for unbounded sets}
We also prove a remarkably simple and powerful property of free boundary~$s$-minimal hypersurfaces: whenever~$\partial E$ is unbounded and~$\Omega$ is bounded, the volume of~$E$ in~$\Omega$ must coincide with the volume of~$E^c$ in~$\Omega$.
 
\begin{theorem}\label{thm: volume constraint}
Let~$s\in(0,1)$, $\Omega$ be an open, bounded set and~$E$ be a free boundary~$s$-minimal surface with~$\partial E$ unbounded.

	Then, 
	\begin{equation}\label{eq: volume condition}
		\mathscr{H}^n(E\cap\Omega)=\mathscr{H}^n(E^c\cap\Omega).
	\end{equation}
	\end{theorem}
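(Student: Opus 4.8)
The plan is to combine the free-boundary Euler--Lagrange condition $\A^s_{E,\Omega}\equiv 0$ on $\partial E\cap\overline\Omega^c$ with the large-distance behaviour of $\A^s_{E,\Omega}$: since $\Omega$ is bounded, as $|x|\to\infty$ the quantity $|x|^{n+s}\A^s_{E,\Omega}(x)$ will converge to $c_{n,s}\big(\mathscr{H}^n(E^c\cap\Omega)-\mathscr{H}^n(E\cap\Omega)\big)$, while, $\partial E$ being unbounded, there are boundary points with $|x|$ arbitrarily large at which $\A^s_{E,\Omega}$ vanishes. Equating the two will give~\eqref{eq: volume condition}.

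\textbf{Step 1: $\A^s_{E,\Omega}$ vanishes on $\partial E\cap\overline\Omega^c$.} First I would take an arbitrary $\mathcal X\in C^\infty_c(\R^n,\R^n)$ with $\operatorname{supp}\mathcal X\cap\overline\Omega=\varnothing$. Then~\eqref{eq: tangency} holds trivially, so criticality~\eqref{eq: criticality} applies to $\mathcal X$; since $\mathcal X$ vanishes on $\Omega$, the interior term in~\eqref{eq: first variation} drops out and one is left with
\[
0=\int_{\partial E\cap\overline\Omega^c}\A^s_{E,\Omega}(x)\,\mathcal X(x)\cdot\nu_{\partial E}(x)\,d\mathscr{H}^{n-1}_x.
\]
(The same identity can be obtained directly: only the terms of $\Per_s(\Phi_t(E);\Omega)$ that couple $\Omega$ with $\Omega^c$ get perturbed, and there the kernel $|x-y|^{-n-s}$ with $x\in\operatorname{supp}\mathcal X$, $y\in\Omega$, is bounded and smooth, so one may differentiate under the integral and integrate by parts without any regularisation.) Since $x\mapsto\A^s_{E,\Omega}(x)=c_{n,s}\int_\Omega\big(\chi_{E^c}(y)-\chi_E(y)\big)|x-y|^{-n-s}\,dy$ is continuous (indeed real-analytic) on $\overline\Omega^c$ and $\mathcal X\cdot\nu_{\partial E}$ may be prescribed freely on $\partial E\cap\overline\Omega^c$, the fundamental lemma of the calculus of variations then forces $\A^s_{E,\Omega}\equiv 0$ there, i.e.\ precisely~\eqref{eq: free boundary}. (If $\A^s_{E,\Omega}(x_0)\ne0$ at some such $x_0$, it would keep a sign on a ball $B\subset\overline\Omega^c$ around $x_0$, and testing with $\mathcal X=\varphi N$, $\varphi\ge0$ a bump in $B$ and $N$ a smooth extension of $\nu_{\partial E}$, would make the integral nonzero.)

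\textbf{Step 2: decay of $\A^s_{E,\Omega}$ and conclusion.} Next I would fix $R>0$ with $\Omega\subset B_R$ and set $V:=\mathscr{H}^n(E\cap\Omega)$, $V^c:=\mathscr{H}^n(E^c\cap\Omega)$. The mean value theorem applied to $z\mapsto|z|^{-n-s}$ gives a constant $C=C(n,s)>0$ with $\big|\,|x-y|^{-n-s}-|x|^{-n-s}\,\big|\le C\,R\,|x|^{-n-s-1}$ for all $y\in\Omega$ and all $|x|\ge 2R$, hence
\[
\big|\,|x|^{n+s}\,\A^s_{E,\Omega}(x)-c_{n,s}(V^c-V)\,\big|\le c_{n,s}\,C\,R\,\mathscr{H}^n(\Omega)\,|x|^{-1}\qquad\text{for }|x|\ge 2R.
\]
Because $\partial E$ is unbounded, one may pick $x_k\in\partial E$ with $|x_k|\to\infty$; for $k$ large $|x_k|>2R$, so $x_k\in\partial E\cap\overline\Omega^c$ and Step 1 yields $\A^s_{E,\Omega}(x_k)=0$. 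Letting $k\to\infty$ in the estimate forces $c_{n,s}(V^c-V)=0$, that is $\mathscr{H}^n(E\cap\Omega)=\mathscr{H}^n(E^c\cap\Omega)$, which is~\eqref{eq: volume condition}.

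\textbf{Main obstacle.} I expect no step of real substance here, consistently with the statement being ``remarkably simple''. The two points that need a little care are the passage from criticality against far-away vector fields to the pointwise condition~\eqref{eq: free boundary} --- harmless, since $\A^s_{E,\Omega}$ is smooth off $\overline\Omega$, so no delicate information on $\partial E$ far from $\Omega$ is needed --- and the elementary kernel estimate in Step 2. The noteworthy aspect is conceptual rather than technical: a purely nonlocal interaction term enforces an exact volume balance of $E$ inside $\Omega$ for \emph{every} unbounded free boundary $s$-minimal surface, with no local counterpart.
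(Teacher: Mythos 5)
Your proof is correct and follows essentially the same route as the paper: the paper likewise picks boundary points $x_k\in\partial E\cap\overline\Omega^c$ with $|x_k|\to\infty$, multiplies the free boundary condition~\eqref{eq: free boundary} by $|x_k|^{n+s}$ and passes to the limit (via dominated convergence rather than your quantitative mean-value estimate) to obtain~\eqref{eq: volume condition}. The only difference is that the paper invokes~\eqref{eq: free boundary} directly as part of being a free boundary $s$-minimal surface, whereas your Step 1 re-derives it from criticality against vector fields supported away from $\overline\Omega$ --- a harmless addition that implicitly uses the same mild regularity of $\partial E$ which the paper's appeal to the Euler--Lagrange condition presupposes.
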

	
The proof of Theorem~\ref{thm: volume constraint} is contained in Section~\ref{sec: volume}.
	
\begin{remark}
	The assumption of~$\partial E$ being unbounded
	in Theorem~\ref{thm: volume constraint} is necessary, as can be seen from the example of Theorem~\ref{thm: rad symm sFBMS}. 
\end{remark}

The volume condition~\eqref{eq: volume condition} can be used to show that many natural candidates for the nonlocal analogues of classical free boundary minimal surfaces are not actually free boundary~$s$-minimal surfaces for most (if not any) $s\in(0,1)$. 

\medskip

Consider the Lawson cones in~$\R^{n+m}$
\begin{equation*}
	C^{n,m}(\alpha)\coloneqq\{(x,y)\in\R^n\times\R^m:|x|<\alpha|y|\}.
\end{equation*}
It was proved in~\cite{Davila-delPino-Wei2018} that for every~$n$, $m\geq 1$ and every~$s\in(0,1)$ there exists a unique~$\alpha=\alpha(s,n,m)>0$ such that~$\partial C^{n,m}(\alpha)$ is a critical point for the~$s$-perimeter. For such~$\alpha$, we denote the solid cone by~$C^{n,m}_s$. We remark that, by symmetry, for any~$n\geq1$ and every~$s\in(0,1)$,
\begin{equation}\label{eq: opening symmetric cones}
	\alpha(s,n,n)=1.
\end{equation}

In~\cite{Davila-delPino-Wei2018} it was also proved that 
\begin{equation}\label{eq: opening C21}
	\alpha(s,2,1)=\sqrt{1-s}+O(1-s)\quad \text{as }s\nearrow 1,
\end{equation}
thus, the opening of the~$s$-critical Lawson cone in~$\R^3$ vanishes as~$s\nearrow 1$. Such cone arises as the blow-down of the fractional catenoid~$F_s$, constructed in~\cite{Davila-delPino-Wei2018}. We recall that, as~$s\nearrow1$, $F_s$ converges to the set~$F_*$ whose boundary is a classical catenoid, uniformly in every compact set.

\medskip

It is reasonable to wonder whether~$s$-critical Lawson cones are free boundary~$s$-minimal hypersurfaces in any ball, since any of these cones trivially satisfies the classical free boundary condition. 

Moreover, a simple limiting argument shows the existence of a ball in which the catenoid is a free boundary minimal surface, thus it is natural to wonder whether the fractional analogues~$F_s$ are free boundary in some ball.

The volume condition~\eqref{eq: volume condition} entails that this is not the case, according to the following result. 

\begin{corollary}\label{CPscf:2er3gr}
There exists~$\overline{s}\in(0,1)$ such that for all~$s\in(\overline{s},1)$ the Lawson cone~$C^{2,1}_s$ and the fractional catenoids~$F_s$ are not free boundary~$s$-minimal surfaces in any ball~$B_R(0)\subset\R^3$.
\end{corollary}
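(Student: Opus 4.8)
The plan is to derive a contradiction with the volume condition from Theorem~\ref{thm: volume constraint} for the catenoid, and with both the volume condition and an explicit computation for the Lawson cone. First I would treat the cone $C^{2,1}_s$. Since $\partial C^{2,1}_s$ is unbounded, if it were a free boundary $s$-minimal surface in some ball $B_R(0)\subset\R^3$ then Theorem~\ref{thm: volume constraint} would force $\mathscr{H}^3(C^{2,1}_s\cap B_R)=\mathscr{H}^3((C^{2,1}_s)^c\cap B_R)$. By scaling invariance of the cone this is equivalent to $\mathscr{H}^3(C^{2,1}_s\cap B_1)=\tfrac12\mathscr{H}^3(B_1)$, i.e.\ the solid angle of $C^{2,1}(\alpha)$ equals exactly half the sphere. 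A direct computation shows that the solid-angle fraction occupied by $\{|x|<\alpha|y|\}$ in $\R^3$ (with $x\in\R^2$, $y\in\R$) equals $1-\tfrac{1}{\sqrt{1+\alpha^2}}$, which equals $\tfrac12$ precisely when $\alpha=\sqrt3$. But by~\eqref{eq: opening C21}, $\alpha(s,2,1)=\sqrt{1-s}+O(1-s)\to 0$ as $s\nearrow 1$, so for $s$ close to $1$ we have $\alpha(s,2,1)$ bounded away from $\sqrt3$; hence the volume condition fails and $C^{2,1}_s$ cannot be free boundary $s$-minimal in any ball centred at the origin. (For balls not centred at the origin one notes the cone has no privileged centre other than $0$, but since the statement restricts to $B_R(0)$ this is not needed.)

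Next I would handle the fractional catenoid $F_s$. The key input is that $\partial F_s$ is unbounded, so again Theorem~\ref{thm: volume constraint} applies: if $F_s$ were free boundary $s$-minimal in some $B_R(0)$ we would need $\mathscr{H}^3(F_s\cap B_R)=\mathscr{H}^3(F_s^c\cap B_R)$. Here there is no exact scaling, so instead I would use the asymptotic description recalled in the excerpt: as $s\nearrow 1$, $F_s\to F_*$ locally uniformly, where $\partial F_*$ is a classical catenoid, and the blow-down of $F_s$ is the cone $C^{2,1}_s$ whose opening $\alpha(s,2,1)\to 0$. The strategy is a dichotomy on the scale $R$. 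For $R$ in any fixed compact range, $\mathscr{H}^3(F_s\cap B_R)\to\mathscr{H}^3(F_*\cap B_R)$, and one checks that for the classical catenoid this equals $\tfrac12\mathscr{H}^3(B_R)$ for at most one value of $R$ (by strict monotonicity of $R\mapsto \mathscr{H}^3(F_*\cap B_R)-\tfrac12\mathscr{H}^3(B_R)$, since the catenoid $\partial F_*$ is not a cone through the origin — the defect is strictly monotone once $B_R$ engulfs the neck region). For $R$ large (of order comparable to or beyond the catenoid's natural length scale and into the blow-down regime), $F_s\cap B_R$ looks like the thin cone $C^{2,1}_s\cap B_R$, whose volume fraction is $1-\tfrac{1}{\sqrt{1+\alpha(s,2,1)^2}}=O(\alpha(s,2,1)^2)=O(1-s)$, far from $\tfrac12$. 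For $R$ small, $F_s\cap B_R$ is essentially empty (or full) so the fraction is near $0$ (or $1$). Combining these, for $s$ sufficiently close to $1$ there is no $R$ at all for which the volume condition can hold, proving the claim for $F_s$ in every ball $B_R(0)$.

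The cone part is essentially a one-line computation once~\eqref{eq: opening C21} is invoked, so the real work is the catenoid. The main obstacle is making the dichotomy quantitative and uniform in $s$: I need (i) a uniform-in-$s$ lower bound on $R\mapsto|\mathscr{H}^3(F_s\cap B_R)-\tfrac12\mathscr{H}^3(B_R)|$ away from the single critical radius of the limit catenoid, which requires the convergence $F_s\to F_*$ to hold not just locally uniformly but with enough control to transfer strict monotonicity to the approximants, and (ii) matching this with the blow-down estimate at large scales, which uses that $\partial F_s$ is trapped between dilations of $\partial C^{2,1}_s$ (a consequence of the construction in~\cite{Davila-delPino-Wei2018}). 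I would phrase the argument so that one only needs, for each $s$ near $1$, a finite collection of radius-ranges on which the volume defect is bounded away from zero by a constant independent of $s$; given the known asymptotics this is a matter of careful bookkeeping rather than new ideas, and the existence of $\overline{s}$ follows by taking the maximum of the finitely many thresholds produced.
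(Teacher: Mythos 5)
Your treatment of the Lawson cone is correct and is essentially the paper's argument made explicit: the paper simply declares the failure of the volume condition evident from~\eqref{eq: opening C21}, while you compute the cap fraction $1-\tfrac{1}{\sqrt{1+\alpha^2}}$ and note it equals $\tfrac12$ only at $\alpha=\sqrt3$. The catenoid half, however, contains a genuine gap, and it sits exactly at the step you defer to ``bookkeeping''. Your claim (i) --- strict monotonicity of $R\mapsto \mathscr{H}^3(F_*\cap B_R)-\tfrac12\mathscr{H}^3(B_R)$, hence at most one critical radius with a uniform bound away from it --- is false. For the neck-radius-one catenoid, $F_*=\{|x_3|<\cosh^{-1}|x'|\}=\{|x'|>\cosh x_3\}$, the defect is negative for $R$ slightly above the neck (the set is a thin sliver there) and negative again as $R\to\infty$ (its volume grows only like $R^2\log R$), but it is \emph{positive} at intermediate scales: slicing by planes $\{x_3=t\}$ gives
\begin{equation*}
\mathscr{H}^3(F_*^c\cap B_R)=2\pi\int_0^R\min\big(\cosh^2 t,\;R^2-t^2\big)\,dt,
\end{equation*}
and at $R=3$ this is $\approx 55.4$, which is strictly less than $\tfrac12\mathscr{H}^3(B_3)=18\pi\approx 56.5$; so $F_*$ occupies \emph{more} than half of $B_3$, while it occupies less than half of $B_2$ and of $B_R$ for $R$ large. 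The defect therefore changes sign, is not monotone, and vanishes at intermediate radii, i.e.\ the classical catenoid does split certain balls centred at its neck into two equal volumes.

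This is not repairable by more careful bookkeeping within your scheme. Since $\mathscr{H}^3(F_s\cap B_R)\to\mathscr{H}^3(F_*\cap B_R)$ uniformly for $R$ in compact intervals, the sign change of the limit defect and the intermediate value theorem produce, for every $s$ close to $1$, radii $R$ at which $F_s$ satisfies the volume identity~\eqref{eq: volume condition} exactly; the volume condition of Theorem~\ref{thm: volume constraint} is only a necessary condition, so at such radii it yields no contradiction, and your announced conclusion that ``there is no $R$ at all for which the volume condition can hold'' cannot be reached. Ruling out those radii requires an input beyond volumes (for instance testing~\eqref{eq: s-min surf}--\eqref{eq: free boundary} directly), which your proposal does not contain. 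For comparison, the paper argues by contradiction along sequences $s_k\nearrow1$, $R_k$: it shows $0<\inf_k R_k\le\sup_k R_k<+\infty$ using precisely your two end-point regimes (small balls miss $\partial F_{s_k}$; after rescaling by $R_k\to\infty$ the profile~\eqref{eq: expansion fractional catenoid} collapses), and then reduces to the assertion that the limiting catenoid fails~\eqref{eq: volume condition} in every ball (Figure~\ref{fig: catenoid}). Your claim (i) is a strictly stronger quantitative form of that assertion, and the slice computation above shows that this intermediate-radius statement is the genuinely delicate point --- indeed the computation contradicts it near $R=3$ --- so this is where a real argument, not bookkeeping, is needed.
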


\begin{remark}
By the monotonicity and the continuity of the map 
		\begin{equation*}
			\alpha\to \mathscr{H}^{n+m}(C^{n,m}(\alpha)\cap B_1),
		\end{equation*}
		along with the facts that 
		\begin{equation*}
			\mathscr{H}^{n+m}(C^{n,m}(0)\cap B_1)=0\qquad{\mbox{and}}\qquad \mathscr{H}^{n+m}(C^{n,m}(\infty)\cap B_1)=\mathscr{H}^{n+m}(B_1),
		\end{equation*}
		we infer the existence of a unique~$\alpha$ such that
		\begin{equation}\label{utirw43829gfhew8ir}
		\mathscr{H}^{n+m}(C^{n,m}(\alpha)\cap B_1)=\frac{\mathscr{H}^{n+m}(B_1)}2.\end{equation}
Notice that~\eqref{utirw43829gfhew8ir} corresponds to the volume condition in~\eqref{eq: volume condition} in this setting.

In the case~$n=m$, by symmetry, we also know that~$C^{n,n}(1)$ is a free boundary~$s$-minimal surface in any ball.
Therefore, in this setting, the unique~$\alpha$ satisfying~\eqref{utirw43829gfhew8ir} is~$\alpha=1$,
and all the other cones~$C^{n,n}(\alpha)$, with~$\alpha\neq1$, are not free boundary~$s$-minimal surfaces in~$B_1$.

It would be interesting to classify all the fractional Lawson cones~$C^{m,n}_s$ which are
free boundary~$s$-minimal surfaces also when~$n\neq m$. The fact that the volume condition~\eqref{eq: volume condition}
does not depend on~$s$ suggests that it is ``difficult'' for~$C^{m,n}_s$ to satisfy this condition for a ``generic''~$s$
and one may even conjecture that the only fractional Lawson cones that are free boundary in a ball are the symmetric cones~$C^{n,n}(1)$.
\end{remark}

\subsection{Stickiness and regularity across the boundary}

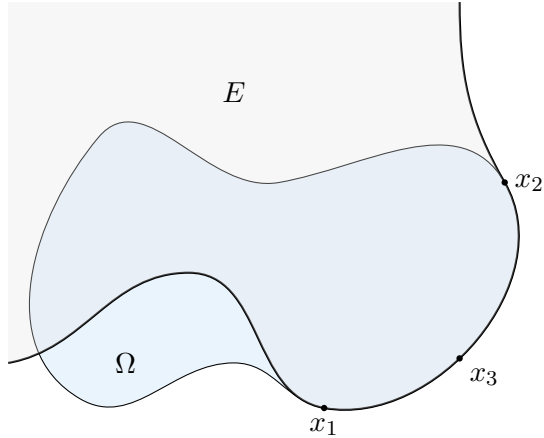
\begin{figure}
	\begin{center}
		\begin{tikzpicture}[scale=1.2]
		
		\coordinate (A) at (0,0);
		\coordinate (B) at (1,-.5);
		\coordinate (C) at (3,2);
		\coordinate (D) at (0.5,2);
		\coordinate (E) at (-1.5,2.5);
		\coordinate (F) at (-1.7,-.4);
		\coordinate (G) at (2.5,0.05);
		
		\coordinate (A1) at (2.5,4);
		\coordinate (A2) at (-.5,1);
		\coordinate (A3) at (-2.5,0);
		\coordinate (A4) at (-2.5,4);


		\filldraw[fill=\colorset!10, draw=black] (A) to[out=0, in=170] (B) to[out=-10,in=-60] (C) to[out=120,in=10] (D) to[out=190,in=50] (E) to[out=230,in=150] (F) to[out=-30,in=180] (A);
		
		\draw[black, thick] (A3) to[out=10,in=180] (A2) to[out=0, in=170] (B) to[out=-10,in=-60] (C) to[out=120,in=-90] (A1);
		
		\fill[fill=gray!20, fill opacity=0.3] (A3) to[out=10,in=180] (A2) to[out=0, in=170] (B) to[out=-10,in=-60] (C) to[out=120,in=-90] (A1) to (A4) to (A3);
		
		\fill (B) circle (1pt);
		\fill (C) circle (1pt);
		\fill (G) circle (1pt);
		
		
		\fill (-1.2,0) node {$\Omega$};
		\fill (0,3) node {$E$};
		\fill (B) node[below] {$x_1$};
		\fill (C) node[right] {$x_2$};
		\fill (G) node[below right] {$x_3$};
		
		\end{tikzpicture}
	\end{center}
	\caption{Different types of stickiness: from inside on~$x_1$, from outside on~$x_2$ and bilateral on~$x_3$.}
	\label{fig: stickiness}
\end{figure}

The ``stickiness'' phenomenon was introduced in~\cite{Dipierro-Savin-Valdinoci2017} and describes the (generic, see~\cite{Dipierro-Savin-Valdinoci2020}) tendency of minimisers of the fractional perimeter to share a portion of boundary with the ambient domain~$\Omega$, effectively ``sticking'' to it.

In this paper, we investigate  stickiness properties of free boundary~$s$-minimal surfaces, see Section~\ref{sec: stickiness} below.
For this purpose, we give the following definitions: 
  
\begin{definition}[Stickiness]
	Let~$E$ and~$\Omega$ be open sets of~$\R^n$ 
	of class~$C^0$
and let~$x\in\partial E\cap\partial\Omega$. We say that 
the set~$E$ \emph{sticks to~$\Omega$} at~$x$ if there exists~$\rho>0$ such that
\begin{eqnarray*}&&
{\mbox{either $\Omega\cap B_\rho(x)\subset  E$ and $\Omega^c\cap B_\rho(x)\subset  E^c$}}\\&&
{\mbox{or $\Omega\cap B_\rho(x)\subset  E^c$ and $\Omega^c\cap B_\rho(x)\subset  E$.}}
\end{eqnarray*}
\end{definition}

\begin{definition}[Stickiness from outside]
	Let~$E$ and~$\Omega$ be open sets of~$\R^n$ of class~$C^0$ and let~$x\in\partial E\cap\partial\Omega$. We say that 
the set~$E$ \emph{sticks to~$\Omega$ from outside} at~$x$ if there exists~$\rho>0$ such that
\begin{eqnarray*}&&
{\mbox{either~$\Omega\cap B_\rho(x)\subset  E$ and~$\Omega^c\cap B_\rho(x)\cap E\ne\varnothing$}}\\&&
{\mbox{or~$\Omega\cap B_\rho(x)\subset  E^c$ and~$\Omega^c\cap B_\rho(x)\cap E^c\ne\varnothing$.}}\end{eqnarray*}
\end{definition}

\begin{definition}[Stickiness from inside]
	Let~$E$ and~$\Omega$ be open sets of~$\R^n$ of class~$C^0$ and let~$x\in\partial E\cap\partial\Omega$. We say that 
the set~$E$ \emph{sticks to~$\Omega$ from inside} at~$x$ if there exists~$\rho>0$ such that
\begin{eqnarray*}&&{\mbox{either~$\Omega^c\cap B_\rho(x)\subset  E^c$ and~$\Omega\cap B_\rho(x)\cap E^c\ne\varnothing$}}\\
&&{\mbox{or~$\Omega^c\cap B_\rho(x)\subset  E$ and~$\Omega\cap B_\rho(x)\cap E\ne\varnothing$.}}
	\end{eqnarray*}
\end{definition}

See Figure~\ref{fig: stickiness} for a depiction of the different types of stickiness.

\medskip

In the context of free boundary~$s$-minimal hypersurfaces, stickiness is quite subtle. Indeed, the behaviour that
it describes is exactly the opposite of what one expects from the local counterpart, where the hypersurface meets the boundary orthogonally.

On the other hand, at the level of the first variation~\eqref{eq: first variation}, in any portion of~$\partial E$ sticking to~$\partial\Omega$ the term~${\mathcal{X}}\cdot\nu_{\partial E}$ vanishes, as a consequence of~\eqref{eq: tangency}. This brings us to the following simple but instructive example.

\begin{example}[Total stickiness]\label{ex: tot stick} The set~$\Omega$ itself, and its complement, are free boundary~$s$-minimal surfaces in~$\Omega$ for any~$s\in(0,1)$. 

Indeed, by the tangency condition~\eqref{eq: tangency}, the flow leaves~$\Omega$ unchanged, and therefore $\Per_s(\Omega;\Omega)$ is constant for inner variations preserving~$\partial\Omega$.  
\end{example}

We point out that Example~\ref{ex: tot stick}
is a degenerate situation, which is the nonlocal analogue of the fact that (if one allows it) $\partial \Omega$ is a classical
free boundary minimal hypersurface in~$\Omega$, since it is unchanged by flows tangent to~$\partial \Omega$. 

Next, we show that there are non-trivial examples of stickiness for critical points.

\begin{example}\label{ex: sections}
	Let~$\Omega\coloneqq B_1\subset\R^n$ and 
	\begin{equation*}
		E\coloneqq\{x\in B_1:x_n>0\}\cup \{x\in B_1^c:x_n<0\}.
	\end{equation*}	
	Then, $E$ is a free boundary~$s$-minimal surface in~$\Omega$ for every $s\in(0,1)$, with
	\begin{equation*}
	\Lambda\coloneqq \left\lbrace x\in\partial E\cap \partial\Omega : \text{$E$ sticks to~$\Omega$ at~$x$}\right\rbrace\ne\varnothing
\end{equation*}
(in fact, with~$\Lambda$ dense in~$\partial\Omega$, being the unit sphere minus the equator). Indeed, in all points of~$\partial E\cap\Omega$ and~$\partial E\cap \overline\Omega^c$ the equations~\eqref{eq: s-min surf} and~\eqref{eq: free boundary} are strongly satisfied by symmetry. 
	
	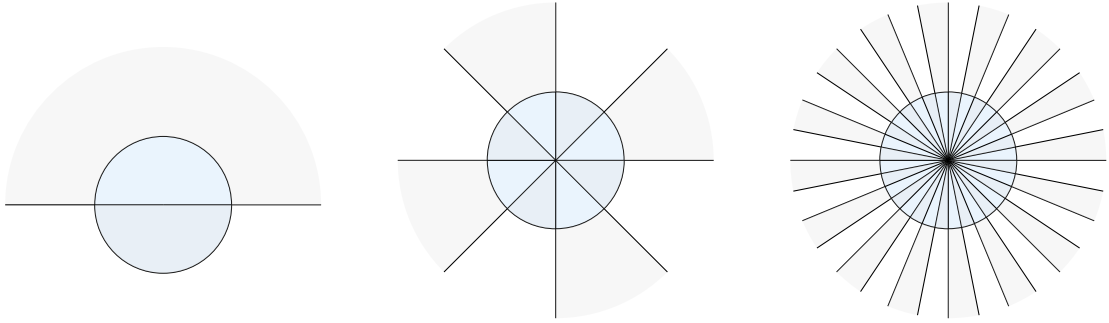
\begin{figure}
  \centering
  \foreach \john in {1,3,5} {
    \begin{minipage}{0.30\textwidth}
      \centering
      \begin{tikzpicture}[scale=0.7]
	
	\def\R{3}
	\def\r{1.3}
	
	\filldraw[fill=\colorset!10, draw=black] (0,0) circle (\r);

	\def\n{\john} 
  \pgfmathsetmacro{\k}{int(2^\n)}   
  \pgfmathsetmacro{\angle}{360/\k}  

  \foreach \i in {0,...,\numexpr\k-1} {
    \pgfmathsetmacro{\start}{\i*\angle}
    \pgfmathsetmacro{\end}{(\i+1)*\angle}
    
    \draw (0,0) -- ++(\start:\R);

    \ifodd\i
      \fill[fill=gray!20, fill opacity=0.3] (0,0) -- ({\r*cos(\start)}, {\r*sin(\start)})
                          arc[start angle=\start, end angle=\end, radius=\r]
                          -- cycle;
    \else
     \fill[fill=gray!20, fill opacity=0.3]
      ({\r*cos(\end)}, {\r*sin(\end)})
        arc[start angle=\end, delta angle=\start-\end, radius=\r]
      -- ({\R*cos(\start)}, {\R*sin(\start)})
        arc[start angle=\start, delta angle=\end-\start, radius=\R]
      -- cycle;      
    \fi
  }

	\end{tikzpicture}
    \end{minipage}
    \hspace{0.02\textwidth}
  }
  \caption{The set~$E$ (in grey) is a free boundary~$s$-minimal surface, with stickiness, in the unit circle (in light blue).}
  \label{fig: sections}
\end{figure}
	
	Using the same strategy we can construct many other examples.
	For any integer~$k\geq 1$ consider the set 
	\begin{equation*}
		\widetilde E\coloneqq \left\lbrace re^{i\theta}\in\C\simeq\R^2:r>0,\ \theta\in \left(\frac{\pi}{k}(2j),\frac{\pi}{k}(2j+1)\right),\ j=0,\dots,k-1\right\rbrace
	\end{equation*}
	given by cone in~$\R^2$ over every other arc in~$\s^1$ connecting~$2k$ equi-spaced points.
	
	One can check easily, still by symmetry, that~$\partial \widetilde E$ is a (non sticking) free boundary minimal hypersurface in~$B_1$. 
	
	On the other hand, the set 
	\begin{equation*}
		E\coloneqq (\widetilde E\cap B_1)\cup (\widetilde E^c\cap B_1^c)
	\end{equation*}
	is a free boundary~$s$-minimal hypersurface in~$B_1$, sticking on the whole unit circle minus the~$2k$ points, see Figure~\ref{fig: sections}.
\end{example}

The sets constructed in Example~\ref{ex: sections} show that stickiness can happen for free boundary~$s$-minimal hypersurfaces. Our next result establishes that the same is not true if we consider only stickiness from outside.

\begin{theorem}\label{dj9asofhvkrew8yhmoijyuj}
	Let~$s\in(0,1)$ and~$E$ be a free boundary~$s$-minimal surface in~$\Omega$ of class~$C^0$.
	
	Then, there are no points on~$\partial E\cap \partial\Omega$ at which~$E$ sticks to~$\Omega$ from outside.
\end{theorem}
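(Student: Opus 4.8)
The plan is to argue by contradiction using the first variation formula of Theorem~\ref{PROP1}, more precisely its pointwise consequence: since criticality~\eqref{eq: criticality} holds for \emph{all} admissible vector fields~$\mathcal{X}$ satisfying the tangency condition~\eqref{eq: tangency}, the Euler--Lagrange equations~\eqref{eq: s-min surf} and~\eqref{eq: free boundary} must hold at every point of~$\partial E\cap\Omega$ and~$\partial E\cap\overline\Omega^c$ respectively. Suppose then that~$E$ sticks to~$\Omega$ from outside at some~$x_0\in\partial E\cap\partial\Omega$, and (up to replacing~$E$ with~$E^c$, which does not affect the notion of free boundary~$s$-minimal surface) assume we are in the first alternative of the definition: there is~$\rho>0$ with~$\Omega\cap B_\rho(x_0)\subset E$ while~$\Omega^c\cap B_\rho(x_0)\cap E\neq\varnothing$.

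The heart of the argument is to test the free boundary condition~\eqref{eq: free boundary} at a well-chosen point~$x\in\partial E\cap\overline\Omega^c$ close to~$x_0$ and derive a strict sign. Because~$\Omega^c\cap B_\rho(x_0)\cap E\neq\varnothing$ and this set is open, I would pick a point~$x\in\partial E\cap\overline\Omega^c\cap B_\rho(x_0)$ — such a point exists since~$E$ contains a nonempty open subset of~$\Omega^c\cap B_\rho(x_0)$ but~$x_0\in\partial E$, so~$\partial E$ must separate this region within~$B_\rho(x_0)$ and hence meets~$\overline\Omega^c$ there (one should take~$x$ to be a point of~$\partial E$ lying in the interior of~$\Omega^c$ and in~$B_\rho(x_0)$, obtained by moving from a point of~$E\cap\Omega^c\cap B_\rho(x_0)$ towards~$x_0$ until hitting~$\partial E$; if~$\partial E\cap\overline{\Omega}^c\cap B_\rho(x_0)=\varnothing$ then all of~$\Omega^c\cap B_\rho(x_0)$ is either in~$E$ or in~$E^c$, and combined with~$\Omega\cap B_\rho(x_0)\subset E$ this forces either total stickiness or~$x_0\notin\partial E$, a contradiction). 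At such~$x$, the free boundary quantity is
\[
\A^K_{E,\Omega}(x)=\int_\Omega\big(\chi_{E^c}(y)-\chi_E(y)\big)K(x-y)\,dy.
\]
Now I claim this integral is strictly negative. Indeed, there is~$r\in(0,\rho)$ with~$B_r(x_0)\subset B_\rho(x_0)$, and on~$\Omega\cap B_r(x_0)$ we have~$\chi_{E^c}-\chi_E=-1$ pointwise, contributing~$-\int_{\Omega\cap B_r(x_0)}K(x-y)\,dy<0$; this is a genuinely negative, bounded-away-from-zero contribution because~$x_0\in\partial\Omega$ forces~$\mathscr{H}^n(\Omega\cap B_r(x_0))>0$ and~$K>0$ with~$x$ at finite distance. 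On the rest of~$\Omega$ the integrand~$\chi_{E^c}-\chi_E$ is at worst~$+1$, but — and this is the quantitative point I need — the kernel decay~\eqref{nvcmxwkdefaddmiss} together with a careful choice of~$x$ very close to~$x_0$ should make the ``far'' positive part strictly smaller in absolute value than the ``near'' negative part. To make this rigorous I would exploit that, since~$\Omega\cap B_\rho(x_0)\subset E$, the positive part of the integrand is supported in~$\Omega\setminus B_\rho(x_0)$, which is at distance~$\geq\rho-|x-x_0|$ from~$x$; sending~$x\to x_0$ the near negative contribution behaves like~$c\,\mathrm{dist}(x,\Omega)^{-s}\cdot(\text{solid-angle factor})$ and in particular diverges or at least dominates, while the far positive contribution stays bounded by~$C_K\int_{\Omega\setminus B_\rho(x_0)}|x-y|^{-n-s}\,dy$, which remains bounded as~$x\to x_0$. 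Hence for~$x$ close enough to~$x_0$ we get~$\A^K_{E,\Omega}(x)<0$, contradicting~\eqref{eq: free boundary}.

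I expect the main obstacle to be the second paragraph's geometric extraction of a suitable test point~$x\in\partial E\cap\overline\Omega^c$ near~$x_0$ at which one also controls~$\mathrm{dist}(x,\Omega)$ from below in terms of~$|x-x_0|$: one wants~$x$ to approach~$x_0$ \emph{non-tangentially} to~$\partial\Omega$ from the exterior, so that the solid angle of~$\Omega$ seen from~$x$ stays bounded below while~$\mathrm{dist}(x,\Omega)^{-s}\to\infty$. Since~$\Omega^c\cap B_\rho(x_0)\cap E$ is open and~$x_0\in\partial\Omega$ with~$\partial\Omega$ merely~$C^0$, some care is needed to produce such a non-tangential sequence; with only~$C^0$ regularity on~$E$ and~$\Omega$ one may instead argue purely in terms of positive Lebesgue measure of~$\Omega\cap B_r(x)$ versus the tail bound, avoiding tangent planes altogether. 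The alternative, cleaner route — which I would actually pursue — is to not insist on hitting~$\partial E$ exactly but rather to contradict~\eqref{eq: free boundary} in an averaged/weak sense: integrate~$\A^K_{E,\Omega}$ against a nonnegative test density supported on~$\partial E\cap\overline\Omega^c\cap B_\rho(x_0)$ times~$\mathcal{X}\cdot\nu_{\partial E}$ for a vector field~$\mathcal{X}$ tangent to~$\partial\Omega$ but with a definite inward normal component along~$\partial E$ there, and show the resulting boundary integral in~\eqref{eq: first variation} is strictly negative, contradicting criticality~\eqref{eq: criticality}. Either way the mechanism is the same: stickiness from outside places a full-measure chunk of~$\Omega$ on the ``$E$ side'', which the near-boundary singular integral~$\A^K_{E,\Omega}$ detects with a sign, and~\eqref{eq: free boundary} forbids that sign.
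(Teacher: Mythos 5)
Your mechanism is the same as the paper's: assume sticking from outside at $x_0$, evaluate the free boundary condition~\eqref{eq: free boundary} at points of $\partial E\cap\overline\Omega^c$ approaching $x_0$, observe that on $\Omega\cap B_\rho(x_0)$ the integrand is identically $-1$ while the possibly positive part lives in $\Omega\setminus B_\rho(x_0)$ and is bounded uniformly, and derive a contradiction because the near contribution blows up. The paper packages the last step slightly differently (it uses the equation to bound $\int_{\Omega\cap B_\rho}|x_k-y|^{-n-s}dy$ by the far term and then applies Fatou's lemma against $\int_{\Omega\cap B_\rho}|y|^{-n-s}dy=+\infty$), but this is the same near/far argument you describe. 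In particular, your worry about non-tangential approach and solid angles is unnecessary: Fatou along \emph{any} sequence $x_k\to x_0$ suffices, provided $\int_{\Omega\cap B_\rho(x_0)}|x_0-y|^{-n-s}dy=+\infty$, which uses the $C^0$ regularity of $\Omega$ (positive measure of $\Omega\cap B_r(x_0)$ at a single scale, which is all you invoke, is not enough for divergence). Likewise, your ``cleaner averaged route'' through~\eqref{eq: first variation} is not actually available here, since Theorem~\ref{PROP1} requires $C^{1,\alpha}$ regularity and uniform transversality, which a $C^0$ critical set need not enjoy; the pointwise condition~\eqref{eq: free boundary}, as in your primary route and in the paper, is the right tool.

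The genuine weak point is your extraction of the test point. First, your dichotomy ``if $\partial E\cap\overline\Omega^c\cap B_\rho(x_0)=\varnothing$ then all of $\Omega^c\cap B_\rho(x_0)$ is either in $E$ or in $E^c$, forcing total stickiness or $x_0\notin\partial E$'' is false as stated: $\overline\Omega^c\cap B_\rho(x_0)$ need not be connected, so its components can lie separately in $E$ and in the interior of $E^c$ without $\partial E$ ever entering $\overline\Omega^c\cap B_\rho(x_0)$; the witness of sticking from outside may sit in a component far from $x_0$, while near $x_0$ the exterior of $\Omega$ is entirely on the $E^c$ side (for instance $\Omega=\{x_2<0\}$ and $E=\{x_2<\max(0,x_1-\tfrac12)\}$ at $x_0=0$ with $\rho=1$: sticking from outside holds at $0$, yet $\partial E\cap\overline\Omega^c$ stays at distance $\tfrac12$ from $0$). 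Second, a single point of $\partial E\cap\overline\Omega^c\cap B_\rho(x_0)$ is not enough: since the far bound is a fixed constant depending on $\rho$, you need a \emph{sequence} of such points converging to a point of $\partial\Omega$ around which $\Omega$ lies in $E$, so that the near term eventually exceeds it. To be fair, the paper's own proof simply asserts the existence of a sequence $x_k\in\partial E\cap\overline\Omega^c$ with $x_k\to x_0$ without justification, so this geometric step is a shared gap; but your attempted proof of it does not hold, and repairing it (e.g.\ by relocating to a suitable boundary point where $\partial E$ genuinely exits $\overline\Omega$) is precisely the missing ingredient in your write-up.
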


We remark that there is no counterpart of Theorem~\ref{dj9asofhvkrew8yhmoijyuj} in the setting of sticking from inside.
Indeed, sticking from inside can happen in general. A simple example is given by
\begin{eqnarray*}E&\coloneqq &\{(y',y_n)\in\R^n:y_n>0\}\\ {\mbox{and }}\quad
\Omega&\coloneqq&\{(y',y_n)\in\R^n:y_n>\varphi(y')\},\end{eqnarray*} where~$\varphi$ is a smooth function such that~$\varphi\equiv 0$ in~$B_1$ and~$\varphi<0$ in~$B_1^c$. 

This kind of ``lack of symmetry'' for the two types of stickiness, from outside and from inside, is due to the fact the equations in~\eqref{eq: s-min surf}
and~\eqref{eq: free boundary} are fundamentally different. Indeed, the proof of Theorem~\ref{dj9asofhvkrew8yhmoijyuj}
exploits a limiting process for the free boundary equation that allows us to reach a contradiction, while  
a similar limiting process for the mean curvature equation would be inconclusive since there could be a compensation between the contributions from inside and outside of~$\Omega$. 
	
	It would be interesting to investigate the phenomenon of sticking from inside
in cases in which~$\Omega$ is convex, mean-convex or even just bounded.
\medskip

Nevertheless we are able to show that, if~$\partial E$ crosses~$\partial\Omega$ without sticking and it is regular enough away from~$\partial\Omega$, then the intersection is orthogonal and the set is regular across~$\partial\Omega$.
	
\begin{theorem}\label{R-wf0jgvhr90tiuohjtktyu}
	Let~$s\in(0,1)$ and~$\alpha\in(s,1]$.
	Let~$\Omega$ be an open set of~$\R^n$ of class~$C^{1,\alpha}$.
	Let~$E$ be a free boundary~$s$-minimal surface in~$\Omega$.
	
Assume that~$0\in\partial E\cap\partial \Omega$ and that there exist~$r>0$ and
	a diffeomorphism~$T:\R^n\to\R^n$ of class~$C^{1,\alpha}$,
with~$T(0)=0$, $ DT(0)=\operatorname{Id}$,
$T(B_r)=B_r$, $T(B_r\cap\Omega)=B_r\cap\{x_1>0\}$
and~$T(E\cap B_r)=E_1\cup E_2$, where
\begin{equation}\label{oqwdlfrthp:wedf}
\begin{split}& E_1\coloneqq \big\{ {\mbox{$x\in B_r$ s.t. $x_1\ge0$ and $\omega_{1}\cdot x<0$}}\big\}\\{\mbox{and }}  \;&
E_2\coloneqq \big\{ {\mbox{$x\in B_r$ s.t. $x_1\le0$ and $\omega_{2}\cdot x<0$}}\big\},
\end{split}\end{equation}
for some unit vectors~$\omega_{1}$, $\omega_{2}\in\R^n$ of the form
$$\omega_{1}=(-\sin\vartheta_1,0\dots,0,\cos\vartheta_1)\quad{\mbox{ and }}\quad\omega_{2}=(-\sin\vartheta_2,0\dots,0,\cos\vartheta_2).$$

Assume also that \begin{equation}\label{qew0doeflj234rt39ht6:gb}
\vartheta_1,\vartheta_2\in\left(-\frac\pi2,\frac\pi2\right).\end{equation}
Then $\vartheta_1=\vartheta_2=0$, and in particular ~$E$ is of class~$C^{1,\alpha}$ in the vicinity of the origin
	and the intersection of~$\partial E$ and~$\partial\Omega$ at the origin is orthogonal. 
\end{theorem}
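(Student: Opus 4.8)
The plan is to blow the configuration up at the origin and reduce the statement to a rigidity property of a model ``broken half-space'' inside a half-space.

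\emph{Step 1 (blow-up).} I would set $E_\lambda:=\lambda^{-1}E$ and $\Omega_\lambda:=\lambda^{-1}\Omega$ for $\lambda>0$. Since $T$ is of class $C^{1,\alpha}$ with $T(0)=0$ and $DT(0)=\operatorname{Id}$, one has $T^{-1}(x)=x+O(|x|^{1+\alpha})$ near the origin; combined with $T(B_r\cap\Omega)=B_r\cap\{x_1>0\}$, $T(E\cap B_r)=E_1\cup E_2$, and the scale invariance of the cones and of the half-space, this gives $\Omega_\lambda\to\{x_1>0\}$ and $E_\lambda\to E^\infty:=E_1^\infty\cup E_2^\infty$ as $\lambda\searrow0$ (locally in $L^1$, and in Hausdorff distance on the boundaries), where $E_i^\infty$ is obtained from $E_i$ by dropping the constraint $x\in B_r$. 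I would then argue that $E^\infty$ is again a free boundary $s$-minimal surface, now in $\{x_1>0\}$: criticality in the sense of \eqref{eq: criticality} is a local and scale-covariant notion, and one passes to the limit along the rescaling using that $DT(0)=\operatorname{Id}$ forces the rescaled (a priori anisotropic and $x$-dependent) energies induced by $T$ to converge, after normalization, to $\Per_s(\cdot\,;\{x_1>0\})$. Granting this, $\partial E^\infty$ is a \emph{smooth} half-hyperplane on each side of $\{x_1=0\}$ --- namely $\{\omega_{1}\cdot x=0,\ x_1>0\}$ inside and $\{\omega_{2}\cdot x=0,\ x_1<0\}$ outside --- so the Euler--Lagrange equations \eqref{eq: s-min surf}--\eqref{eq: free boundary} hold at those boundary points.

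\emph{Step 2 (the interior equation forces $\vartheta_1=\vartheta_2$).} Writing $\operatorname{sgn}(\omega\cdot y):=\chi_{\{\omega\cdot y>0\}}(y)-\chi_{\{\omega\cdot y<0\}}(y)$ and using that on $\{y_1>0\}$ (resp. $\{y_1<0\}$) the set $E^\infty$ coincides with the half-space $\{\omega_{1}\cdot y<0\}$ (resp. $\{\omega_{2}\cdot y<0\}$), equation \eqref{eq: s-min surf} at a point $x$ with $x_1>0$, $\omega_{1}\cdot x=0$ reads
\[
\pv\int_{\{y_1>0\}}\frac{\operatorname{sgn}(\omega_{1}\cdot y)}{|x-y|^{n+s}}\,dy+\int_{\{y_1<0\}}\frac{\operatorname{sgn}(\omega_{2}\cdot y)}{|x-y|^{n+s}}\,dy=0 .
\]
Since half-spaces are $s$-minimal, $\pv\int_{\R^n}\operatorname{sgn}(\omega_{1}\cdot y)\,|x-y|^{-n-s}\,dy=0$ whenever $\omega_{1}\cdot x=0$; subtracting this, and using $x_1>0$ to split off the (now absolutely convergent) integral over $\{y_1<0\}$, the identity becomes
\[
\int_{\{y_1<0\}}\big(\operatorname{sgn}(\omega_{2}\cdot y)-\operatorname{sgn}(\omega_{1}\cdot y)\big)\frac{dy}{|x-y|^{n+s}}=0 .
\]
The integrand is supported in the dihedral wedge between $\{\omega_{1}\cdot y=0\}$ and $\{\omega_{2}\cdot y=0\}$ intersected with $\{y_1<0\}$, where it equals the constant $-2\operatorname{sgn}(\vartheta_1-\vartheta_2)$; this wedge has positive measure whenever $\vartheta_1\ne\vartheta_2$ and the weight is strictly positive, so necessarily $\vartheta_1=\vartheta_2=:\vartheta$, and $E^\infty=\{\omega\cdot y<0\}$ with $\omega=(-\sin\vartheta,0,\dots,0,\cos\vartheta)$.

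\emph{Step 3 (the free boundary equation forces $\vartheta=0$; conclusion).} Now $E^\infty$ is a genuine half-space, so the free boundary condition \eqref{eq: free boundary} at a point $x$ with $x_1<0$, $\omega\cdot x=0$ reads $\int_{\{y_1>0\}}\operatorname{sgn}(\omega\cdot y)\,|x-y|^{-n-s}\,dy=0$. Reflecting through $\{\omega\cdot y=0\}$ via the isometry $\sigma(y):=y-2(\omega\cdot y)\omega$ --- which fixes $x$ and exchanges $\{\omega\cdot y>0\}$ and $\{\omega\cdot y<0\}$ --- and changing variables in the negative part, this becomes
\[
\int_{(\{y_1>0\}\setminus\sigma(\{y_1>0\}))\cap\{\omega\cdot y>0\}}\frac{dy}{|x-y|^{n+s}}-\int_{(\sigma(\{y_1>0\})\setminus\{y_1>0\})\cap\{\omega\cdot y>0\}}\frac{dy}{|x-y|^{n+s}} .
\]
The symmetric difference $\{y_1>0\}\,\triangle\,\sigma(\{y_1>0\})$ is a union of two antipodal dihedral wedges of opening $2|\vartheta|$, and a check of the relevant angles shows that one of them lies entirely in $\{\omega\cdot y<0\}$ while the other lies entirely in $\{\omega\cdot y>0\}$; hence for $\vartheta\ne0$ the first integral above vanishes while the second is strictly positive (the wedge has positive measure and $x$ stays at positive distance from it), contradicting the free boundary condition. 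Therefore $\vartheta_1=\vartheta_2=0$, so $\omega_{1}=\omega_{2}=e_n$ and $T(E\cap B_r)=B_r\cap\{x_n<0\}$; being the $C^{1,\alpha}$-diffeomorphic image of a half-ball, $E$ is of class $C^{1,\alpha}$ near the origin, and since $DT(0)=\operatorname{Id}$ the tangent planes to $\partial E$ and $\partial\Omega$ at the origin are $\{x_n=0\}$ and $\{x_1=0\}$, hence orthogonal.

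\emph{Main obstacle.} The delicate point is Step~1. Across $\partial E\cap\partial\Omega$ the set $E$ is a priori only Lipschitz (it becomes $C^{1,\alpha}$ only \emph{after} one knows $\vartheta_1=\vartheta_2=0$), so Theorem~\ref{PROP1} does not apply to $E$ directly; one must work from \eqref{eq: criticality} in its localized form --- differentiating $\Per_s(\Phi_t(E);\Omega)-\Per_s(E;\Omega)$, which is finite for $\mathcal{X}\in C^\infty_c(\R^n,\R^n)$ --- and show that this localized first variation is stable under the rescaling $x\mapsto\lambda^{-1}x$ composed with the $C^{1,\alpha}$ change of variables $T$, using $E_\lambda\to E^\infty$, the convergence (after normalization) of the rescaled kernels induced by $T$ to $c_{n,s}|\cdot|^{-n-s}$ --- a consequence of $T\in C^{1,\alpha}$ and $DT(0)=\operatorname{Id}$ --- and the bound \eqref{nvcmxwkdefaddmiss}. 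Once $E^\infty$ is known to be critical, the rest is classical: the Euler--Lagrange equations are invoked only at smooth boundary points of $E^\infty$, away from the kink $\{x_1=0=x_n\}$ and from $\{x_1=0\}$, where Theorem~\ref{PROP1} applies with $\mathcal{X}$ localized there.
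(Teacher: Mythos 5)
Your Steps 2 and 3 are correct as computations on the model cone, and they are in substance the same computations that drive the paper's proof: the wedge between $\{\omega_1\cdot y=0\}$ and $\{\omega_2\cdot y=0\}$ inside $\{y_1<0\}$ is exactly the set $F$ whose contribution makes the interior nonlocal mean curvature blow up in Lemma~\ref{sdfertyu:qwdf}, and the reflection $y\mapsto y-2(\omega\cdot y)\omega$ is exactly the device used in Lemma~\ref{osjdlcvORmJUfbgb} to show that the free-boundary quantity cannot vanish when $\vartheta\ne0$; the final passage from $\vartheta_1=\vartheta_2=0$ to $C^{1,\alpha}$ regularity and orthogonality is also fine (this is Lemma~\ref{CC1} in the paper).

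The genuine gap is your Step 1, which you flag but do not carry out, and it is not a routine technicality. Stationarity in the sense of \eqref{eq: criticality} does not pass to blow-up limits of merely critical (non-minimizing) sets without quantitative control of the first variation. Concretely: (a) you cannot invoke Theorem~\ref{PROP1} for $E$ or for the rescalings $E_\lambda$, since across $\partial\Omega$ and along the crease these sets are only Lipschitz (the theorem requires $E\in C^{1,\alpha}$ with $\alpha>s$ and uniform transversality), so there is no first-variation formula available to pass to the limit; (b) arguing instead with the difference quotients of $t\mapsto\Per_s(\Phi_t(E_\lambda);\Omega_\lambda)$ gives, for each fixed $\lambda$, only that the derivative at $t=0$ vanishes, and to transfer this to the limit you need uniform-in-$\lambda$ control in $t$, a correction of the test field (a field tangent to $\{x_1=0\}$ is not tangent to $\partial\Omega_\lambda$), and a localization/renormalization because $\Per_s(E^\infty;\{x_1>0\})=+\infty$; none of this is supplied, and each step needs estimates of first-variation type for the non-smooth sets $E_\lambda$ near the crease, where (by the paper's own lemmas) the relevant densities may blow up like $\operatorname{dist}^{-s}$. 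It is also a warning sign that your scheme never uses the hypothesis $\alpha>s$: that assumption is precisely what controls the diffeomorphism error against the kernel singularity (the $O(r^{\alpha-s})$ bounds in Lemmas~\ref{sdfertyu:qwdf} and~\ref{osjdlcvORmJUfbgb}, and the hypotheses of Theorem~\ref{PROP1}), and it is needed even to make sense of the pointwise equations \eqref{eq: s-min surf}--\eqref{eq: free boundary} at smooth points of $\partial E$. The paper avoids the blow-up altogether: it uses the Euler--Lagrange equations only at smooth points of $\partial E$ near the origin (in $\Omega$, resp.\ in $\overline\Omega^c$) and shows directly that, if $\vartheta_1\ne\vartheta_2$, the nonlocal mean curvature diverges as such points approach $0$, and, if $\vartheta\ne0$, the free-boundary quantity diverges along a sequence $x_k\to0$, contradicting criticality. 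If you try to make your Step 1 rigorous, the most economical route is to derive the pointwise equations for $E$ itself at nearby smooth points and compare with the cone, absorbing the $T$-errors via $\alpha>s$ --- which is precisely the paper's argument, and then the blow-up becomes unnecessary.
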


We stress that condition~\eqref{qew0doeflj234rt39ht6:gb} is related to the fact that~$\partial E$ and~$\partial\Omega$ do not adhere to each other at the origin (which is supposed to be a common boundary point).
Moreover, condition~\eqref{oqwdlfrthp:wedf} says, in a nutshell, that~$E$
is of class~$C^{1,\alpha}$ near the origin ``from both sides'' of~$\partial\Omega$.
In this spirit, Theorem~\ref{R-wf0jgvhr90tiuohjtktyu} guarantees
that~$E$ is, in fact, $C^{1,\alpha}$ through the origin as well.

Natural questions regarding the nonlocal free boundary minimal surfaces involve their regularity and density properties. For instance one may wonder whether they are always smooth, whether a point on their boundary presents uniform densities of the set and its complement, and whether, in a given ball, it always presents two balls of comparable radii contained, respectively, in the set and its complement (this is the so-called ``clean ball condition''). Interestingly, all these properties do
{\em not} hold true in our setting. As a counterexample, one can consider, for all~$N\in{\mathbb{N}}\cap[2,+\infty)$, the planar set defined in complex notation by
$$E_N\coloneqq \left\{ re^{i\vartheta}, {\mbox{with~$r>0$ and }}\vartheta\in \bigcup_{j=0}^{N-1}\left(\frac{2j\pi}{N},\frac{(2j+1)\pi}{N}\right) \right\}.$$
By symmetry, $E_N$ is a nonlocal free boundary minimal surface in any ball centered at the origin, but, at the origin, it violates smoothness, and does not satisfy density estimates and clean ball conditions with respect to uniform quantities (any property of this type actually degenerates when~$N$ gets larger and larger).
\medskip

We now dive into the technical part of this paper, by providing the proofs of the main results.

\section{The Euler--Lagrange equations and proof of Theorem~\ref{PROP1}}\label{sec: EL}

In this section we prove Theorem~\ref{PROP1}.
We will follow the strategy of~\cite[Theorem~6.1]{Figalli-Fusco-Maggi-Millot-Morini2015}, by first regularising the kernel and then showing that the result passes to the limit. We begin by proving Lemma \ref{lem: estimate K near Omega}.

\begin{proof}[Proof of Lemma \ref{lem: estimate K near Omega}]
For every~$x\in T_{\bar\delta}$, we denote by~$\pi_x\in\partial\Omega$ the nearest point projection of~$x$ and let~$d_x\coloneqq \operatorname{dist}(x,\Omega)=|x-\pi_x|$.

Now let~$x \in T_{\bar\delta}\cap\overline\Omega^c$ and let~$P_x\colon \R^n\to\R^n$ be a rigid motion that maps~$\pi_x$ to~$0$ and~$x$ to~$d_xe_n$. Then, recalling~\eqref{nvcmxwkdefaddmiss},
and using the changes of variables~$z\coloneqq P_x y$ and~$w\coloneqq z/d_x$,
we find that
		\begin{equation}\label{vbncmxow8459rtegfekuw}\begin{split}
		&	\int_\Omega K(x-y)dy\leq C_K\int_\Omega \frac{dy}{|x-y|^{n+s}}\\
&\qquad=C_K\int_{P_x\Omega} \frac{dz}{|d_xe_n-z|^{n+s}}=\frac{C_K}{d_x^s}\int_{d_x^{-1}P_x\Omega}\frac{dw}{|e_n-w|^{n+s}}
.		\end{split}\end{equation}

We point out that~$d_x^{-1}P_x\Omega$ converges to~$\{y_n<0\}$ as~$d_x\to0$, namely as~$x\to \partial\Omega$.
As a result, there exists~$\delta_0\in(0,\bar\delta)$ such that,
for all~$x\in T_{\delta_0}\cap\overline\Omega^c$,
$$ \int_{d_x^{-1}P_x\Omega}\frac{dw}{|e_n-w|^{n+s}}\le 
1+\int_{\{w_n<0\}}\frac{dw}{|e_n-w|^{n+s}}<+\infty.$$

Plugging this information into~\eqref{vbncmxow8459rtegfekuw},
we conclude that, for all~$x\in T_{\delta_0}\cap\overline\Omega^c$,
$$\int_\Omega K(x-y)dy\le
\frac{C\,C_K}{d_x^s},$$
for some~$C>0$ depending only on~$n$ and~$s$.
This entails the desired result.
	\end{proof}

\begin{proof}[Proof of Theorem~\ref{PROP1}]
Given~$\delta>0$ sufficiently small, consider a smooth, monotone family of cut-off functions~$\eta_\delta\colon [0,+\infty)\to [0,1]$ such that
\begin{eqnarray*} \eta_\delta\equiv 1 {\mbox{ in }}[0,\delta]\cup \left[\tfrac1{\delta},+\infty\right),\quad 
\eta_\delta\equiv 0 {\mbox{ in }}\left[2\delta,\tfrac{1}{2\delta}\right]\quad{\mbox{ and }}\quad
|\eta'_\delta|\leq \tfrac{2}\delta.\end{eqnarray*}
We define the regularised kernel~$K_\delta(z)\coloneqq (1-\eta_\delta(|z|))K(z)$.

Note that, as a consequence of the tangency condition~\eqref{eq: tangency}, the flow leaves~$\Omega$ unchanged, namely
\begin{equation*}
	\Phi_t(\Omega)=\Omega\quad (\text{and clearly }\Phi_t(\Omega^c)=\Omega^c).
\end{equation*}
Moreover, we have that
$$ \frac{d}{dt}\Per_{K_\delta}(E_t;\Omega)= \frac{d}{dh}\Big\vert_{h=0}\Per_{K_\delta}(E_{t+h};\Omega),$$
see page~481 in~\cite{Figalli-Fusco-Maggi-Millot-Morini2015}.

Using these facts, we compute, for~$|t|$ and~$|h|$ small,
\begin{eqnarray*}
	\mathcal{I}_{K_\delta}(E_{t+h}^c\cap \Omega,E_{t+h}\cap\Omega)&=&\int_{E_{t+h}^c\cap \Omega}\int_{E_{t+h}\cap\Omega}K_\delta(x-y)dxdy
	\\&=&\int_{E_t^c\cap \Omega}\int_{E_t\cap\Omega}K_\delta(\Phi_h(x)-\Phi_h(y))J_{\Phi_h}(x)J_{\Phi_h}(y)dxdy.
\end{eqnarray*}
where~$J_{\Phi_h}$ is the Jacobian determinant of the change of variable~$\Phi_h$, which can be expanded as~$J_{\Phi_h}=\operatorname{id}+h{\mathcal{X}}+O(h^2)$. 

Thus, we can write
\begin{eqnarray*}&&
	\frac{d}{dt} \mathcal{I}_{K_\delta}(E_t^c\cap \Omega,E_t\cap\Omega)=A_1+A_2,
\end{eqnarray*}
where
\begin{eqnarray*}
A_1&\coloneqq &\int_{E_t^c\cap \Omega}\int_{E_t\cap\Omega}\nabla K_\delta(x-y)({\mathcal{X}}(x)-{\mathcal{X}}(y))dxdy\\
{\mbox{and }}\quad A_2&\coloneqq &\int_{E_t^c\cap \Omega}\int_{E_t\cap\Omega}K_\delta(x-y)(\div {\mathcal{X}}(x)+\div {\mathcal{X}}(y))dxdy.\end{eqnarray*}

By the symmetry of~$K$ and integrating by parts in~$A_1$, we get 
\begin{align*}
	A_1&=\int_{E_t^c\cap \Omega}\left(\int_{E_t\cap\Omega}\nabla K_\delta(x-y)\cdot{\mathcal{X}}(x)dx\right)dy+\int_{E_t\cap \Omega}\left(\int_{E_t^c\cap\Omega}\nabla K_\delta(y-x)\cdot{\mathcal{X}}(y)dy\right)dx\\
	&=-\int_{E_t^c\cap \Omega}\int_{E_t\cap\Omega} K_\delta(x-y)\div {\mathcal{X}}(x)dxdy\\
	&\qquad\qquad+\int_{E_t^c\cap \Omega}\int_{\partial (E_t\cap\Omega)}K_\delta(x-y){\mathcal{X}}(x)\cdot\nu_{\partial (E_t\cap\Omega)}(x)d\mathscr{H}^{n-1}_xdy\\
	&\qquad\qquad-\int_{E_t\cap \Omega}\int_{E_t^c\cap\Omega} K_\delta(x-y)\div {\mathcal{X}}(y)dxdy\\
	&\qquad\qquad+\int_{E_t\cap \Omega}\int_{\partial (E_t^c\cap\Omega)}K_\delta(x-y){\mathcal{X}}(y)\cdot\nu_{\partial (E_t^c\cap\Omega)}(y)d\mathscr{H}^{n-1}_ydx.
\end{align*}
We observe that the sum of the first and the third terms in the last expression equals~$-A_2$, and therefore
\begin{equation}\label{yfhjdkls874395820yhfjd}\begin{split}&
	\frac{d}{dt} \mathcal{I}_{K_\delta}(E_t^c\cap \Omega,E_t\cap\Omega)\\&\quad=
	\int_{E_t^c\cap \Omega}\int_{\partial (E_t\cap\Omega)}K_\delta(x-y){\mathcal{X}}(x)\cdot\nu_{\partial (E_t\cap\Omega)}(x)d\mathscr{H}^{n-1}_xdy
	\\&\qquad\quad+\int_{E_t\cap \Omega}\int_{\partial (E_t^c\cap\Omega)}K_\delta(x-y){\mathcal{X}}(y)\cdot\nu_{\partial (E_t^c\cap\Omega)}(y)d\mathscr{H}^{n-1}_ydx.
\end{split}\end{equation}

We also remark that~$\partial (E_t\cap\Omega)=(\partial E_t\cap \Omega)\cup(E_t\cap\partial\Omega)$ and, for~$x\in E_t\cap\partial\Omega$,
\begin{equation*}
	\nu_{\partial (E_t\cap\Omega)}(x)=\nu_{\partial\Omega}(x).
\end{equation*}
Hence, by the tangency condition~\eqref{eq: tangency}, we deduce that
\begin{equation*}
	{\mathcal{X}}(x)\cdot \nu_{\partial (E_t\cap\Omega)}(x)=0\quad {\mbox{ for all }} x\in E_t\cap\partial\Omega.
\end{equation*}

Moreover,
\begin{eqnarray*}\partial (E_t^c\cap\Omega)=(\partial E_t^c\cap \Omega)\cup(E_t^c\cap\partial\Omega)=
(\partial E_t\cap \Omega)\cup(E_t^c\cap\partial\Omega)
\end{eqnarray*}
and we have that
\begin{eqnarray*}
&&{\mathcal{X}}(y)\cdot \nu_{\partial (E_t^c\cap\Omega)}(y)=0\quad {\mbox{ for all }} y\in E_t^c\cap\partial\Omega\\
{\mbox{and }} &&
{\mathcal{X}}(y)\cdot \nu_{\partial (E_t^c\cap\Omega)}(y)=-{\mathcal{X}}(y)\cdot \nu_{\partial (E_t\cap\Omega)}(y)
\quad {\mbox{ for all }} y\in \partial E_t^c \cap\Omega.
\end{eqnarray*}

As a result, using these pieces of information into~\eqref{yfhjdkls874395820yhfjd}, we conclude that
\begin{eqnarray*}&&
	\frac{d}{dt} \mathcal{I}_{K_\delta}(E_t^c\cap \Omega,E_t\cap\Omega)\\&&\quad
	=\int_{E_t^c\cap \Omega}\int_{\partial E_t\cap\Omega}K_\delta(x-y){\mathcal{X}}(x)\cdot\nu_{\partial E_t}(x)d\mathscr{H}^{n-1}_xdy\\
	&&\qquad\quad-\int_{E_t\cap \Omega}\int_{\partial E_t\cap\Omega}K_\delta(x-y){\mathcal{X}}(y)\cdot\nu_{\partial E_t}(y)d\mathscr{H}^{n-1}_ydx.
\end{eqnarray*}

Similar computations lead to 
\begin{eqnarray*}&&
	\frac{d}{dt} \mathcal{I}_{K_\delta}(E_t\cap \Omega,E_t^c\cap\Omega^c)\\&&\quad=\int_{E_t^c\cap \Omega^c}\int_{\partial E_t\cap\Omega}K_\delta(x-y){\mathcal{X}}(x)\cdot\nu_{\partial E_t}(x)d\mathscr{H}^{n-1}_xdy\\
	&&\qquad\quad-\int_{E_t\cap \Omega}\int_{\partial E_t\cap\Omega^c}K_\delta(x-y){\mathcal{X}}(y)\cdot\nu_{\partial E_t}(y)d\mathscr{H}^{n-1}_ydx
\end{eqnarray*}
and 
\begin{eqnarray*}&&
	\frac{d}{dt} \mathcal{I}_{K_\delta}(E_t\cap \Omega^c,E_t^c\cap\Omega)\\&&\quad=\int_{E_t^c\cap \Omega}\int_{\partial E_t\cap\Omega^c}K_\delta(x-y){\mathcal{X}}(x)\cdot\nu_{\partial E_t}(x)d\mathscr{H}^{n-1}_xdy\\
	&&\qquad\quad-\int_{E_t\cap \Omega^c}\int_{\partial E_t\cap\Omega}K_\delta(x-y){\mathcal{X}}(y)\cdot\nu_{\partial E_t}(y)d\mathscr{H}^{n-1}_ydx.
\end{eqnarray*}
Thus, putting everything together, we find that, for~$|t|$ small,
\begin{equation}\label{compuaboveujfyrjfn86954}\begin{split}&
	\frac{d}{dt} \Per_{K_\delta}(E_t;\Omega)\\&\quad=\int_{\partial E_t\cap \Omega}\left(\int_{E_t^c}K_\delta(x-y)dy-\int_{E_t}K_\delta(x-y)dy\right){\mathcal{X}}(x)\cdot\nu_{\partial E_t}(x)d\mathscr{H}^{n-1}_x\\
&\qquad+\int_{\partial E_t\cap\Omega^c}\left(\int_{E_t^c\cap \Omega}K_\delta(x-y)dy-\int_{E_t\cap \Omega}K_\delta(x-y)dy\right){\mathcal{X}}(x)\cdot\nu_{\partial E_t}(x)d\mathscr{H}^{n-1}_x.
\end{split}\end{equation}

Next, we consider the limit as~$\delta\searrow 0^+$. For this, we set 
\begin{equation*}
	\phi_\delta(t)\coloneqq \Per_{K_\delta}(E_t;\Omega)\qquad\text{and}\qquad \phi(t)\coloneqq \Per_{K}(E_t;\Omega).
\end{equation*}
Our goal is to show that 
\begin{equation}\label{eq: first variation K}
	\phi'(0)=\int_{\partial E\cap \Omega}\H^K_{E}(x)\xi(x)d\mathscr{H}^{n-1}_x+\int_{\partial E\cap \Omega^c}\A^{K}_{E,\Omega}(x)\xi(x)d\mathscr{H}^{n-1}_x
\end{equation} 
where~$\xi(x)\coloneqq {\mathcal{X}}(x)\cdot \nu_{\partial E}(x)$.

To this aim, we notice that, by the Monotone Convergence Theorem, for~$|t|$ small, say~$|t|<\varepsilon$,
\begin{equation}\label{7985430fhcldsigehilehrjhgfd654ligerw}
\lim_{\delta\searrow0}\phi_\delta(t)=\phi(t).\end{equation} 
Moreover, by~\eqref{compuaboveujfyrjfn86954},
\begin{equation}\label{ytiuerw9865tyfgkjds}
	\phi_\delta'(t)=\int_{\partial E_t\cap \Omega}\H^{K_\delta}_{E_t}(x)\xi(x)d\mathscr{H}^{n-1}_x+\int_{\partial E_t\cap \Omega^c}\A^{K_\delta}_{E_t,\Omega}(x)\xi(x)d\mathscr{H}^{n-1}_x.
\end{equation}

Next, we perform a passage to the limit as~$\delta\searrow 0$ of the derivative by showing uniform convergence to the desired quantity.
In this step, the argument to show convergence of~$\H_{E_t}^{K_\delta}$ and~$\A_{E_t,\Omega}^{K_\delta}$ is very different.
Indeed, by~\cite[Proposition~6.3]{Figalli-Fusco-Maggi-Millot-Morini2015} the approximated curvatures $\H_{E_t}^{K_\delta}$ are uniformly close to~$\H_{E_t}^{K}$ in the whole region~$\partial E_t\cap \Omega$. Note that the proof of~\cite[Proposition~6.3]{Figalli-Fusco-Maggi-Millot-Morini2015} provides estimates in $\Omega$ which are robust as long as $E$ is $C^{1,\alpha}$ in $\Omega$, with~$\alpha\in(s,1)$. In particular, we have that 
\begin{equation}\label{ytiuerw9865tyfgkjds2}
	\lim_{\delta\to 0^+}\;\sup_{|t|<\varepsilon}\; \sup_{\partial E_t\cap\Omega\cap\operatorname{spt}\xi}\big|\H^{K}_{E_t}-\H^{K_\delta}_{E_t}\big|=0,
\end{equation}
see~\cite[formula~(6.24)]{Figalli-Fusco-Maggi-Millot-Morini2015}.

The same argument cannot be made for $\A_{E_t,\Omega}^{K_\delta}$, since it would imply uniform convergence of the bounded quantities $\A_{E_t,\Omega}^{K_\delta}$ to the unbounded quantity $\A_{E_t,\Omega}^{K}$ in $\partial E\cap\Omega^c$. Instead, we proceed as follows: let~$\delta_0$ be given by Lemma~\ref{lem: estimate K near Omega} and let~$\delta\in\left(0,\frac{\delta_0}2\right)$.
Let~$T_{2\delta}$ be a~${2\delta}$-tubular neighbourhood of~$\partial\Omega$. Note that, for all~$x\in \partial E_t\cap\overline\Omega^c\cap T_{2\delta}^c$,
\begin{equation*}
	\begin{split}&
		\big|\A_{E_t,\Omega}^{K_{{\delta}}}(x)-\A_{E_t,\Omega}^{K}(x)\big|=\left|\int_\Omega(\chi_{E_t^c}(y)-\chi_{E_t}(y))\eta_{\delta}(|x-y|)K(x-y)dy \right|\\
	&\qquad\leq 2\int_{B_{1/2\delta}^c}K(z)dz
	\leq 2C_K\int_{B_{1/2\delta}^c}\frac{dz}{|z|^{n+s}}	
	= \frac{2^{1+s}C_K\omega_n}{s}\,\delta^s.
	\end{split}
\end{equation*}
As a consequence, since~$\xi$ is compactly supported,
\begin{equation}\label{vcxnwqyet624ertdfgcg78iuhj}\begin{split} &
\left|\int_{\partial E_t\cap \Omega^c\cap T_{2\delta}^c}\Big(\A^{K_\delta}_{E_t,\Omega}(x)-\A^{K}_{E_t,\Omega}(x)\Big)\xi(x)d\mathscr{H}^{n-1}_x\right|
\le C\|{\mathcal{X}}\|_{L^\infty(\R^n,\R^n)} \delta^s
,\end{split}\end{equation}
up to renaming~$C$.

Furthermore, if~$x\in \partial E_t\cap\overline\Omega^c\cap T_{2\delta}$, thanks to Lemma \ref{lem: estimate K near Omega}
we have that
\begin{equation*}
\begin{split}
	&\big|\A_{E_t,\Omega}^{K_\delta}(x)-\A_{E_t,\Omega}^{K}(x)\big|=\left|\int_\Omega(\chi_{E_t^c}(y)-\chi_{E_t}(y))\eta_\delta(|x-y|)K(x-y)dy \right|\\
	&\qquad\qquad\leq \int_\Omega K(x-y)dy\leq C\operatorname{dist}(x,\Omega)^{-s},
\end{split}
\end{equation*}
where $C>0$ depends on~$C_K$, $n$ and~$s$.

Therefore,
\begin{equation}\label{y5t43wTUfrgcnxty54}\begin{split}&
\left|\int_{\partial E_t\cap \Omega^c\cap T_{2\delta}}\Big(
\A^{K_\delta}_{E_t,\Omega}(x)-\A^{K}_{E_t,\Omega}(x)\Big)\xi(x)d\mathscr{H}^{n-1}_x\right|\\&\qquad\le C \int_{\partial E_t\cap \Omega^c\cap T_{2\delta}}
\operatorname{dist}(x,\Omega)^{-s}
|\xi(x)|d\mathscr{H}^{n-1}_x.
\end{split}\end{equation}

Now, we suppose that the support of~${\mathcal{X}}$ is contained in some ball~$B_R$, and we use the regularity assumption on~$\partial E$ to conclude that there exist~$\rho>0$, $N\in{\mathbb{N}}$ and~$p_1,\dots,p_N\in\R^n$ such that, for~$t$ sufficiently small,
$$
\partial E_t\cap B_R\subset\bigcup_{i=1}^N B_\rho(p_i)$$
and $\partial E_t\cap B_\rho(p_i)$ is the graph of a~$C^{1,\alpha}$-function.

Using this information into~\eqref{y5t43wTUfrgcnxty54}, we find that
\begin{equation}\label{vckm87654eworu948}\begin{split}&
\left|\int_{\partial E_t\cap \Omega^c\cap T_{2\delta}}\Big(
\A^{K_\delta}_{E_t,\Omega}(x)-\A^{K}_{E_t,\Omega}(x)\Big)\xi(x)d\mathscr{H}^{n-1}_x\right|\\&\qquad\le C \int_{\partial E_t\cap \Omega^c\cap T_{2\delta}\cap B_R}
\operatorname{dist}(x,\Omega)^{-s}
|\xi(x)|d\mathscr{H}^{n-1}_x\\
&\qquad\le C\sum_{i=1}^N \int_{\partial E_t\cap \Omega^c\cap T_{2\delta}\cap B_{\rho}(p_i)}\operatorname{dist}(x,\Omega)^{-s}
|\xi(x)|d\mathscr{H}^{n-1}_x\\&\qquad\le
C\|{\mathcal{X}}\|_{L^\infty(\R^n,\R^n)}
\sum_{i=1}^N \int_{\partial E_t\cap \Omega^c\cap T_{2\delta}\cap B_{\rho}(p_i)}\operatorname{dist}(x,\Omega)^{-s}d\mathscr{H}^{n-1}_x
.\end{split}\end{equation}

Now we let~$\pi_x\in\partial\Omega$ be such that~$|x-\pi_x|=\operatorname{dist}(x,\Omega)$ and consider a diffeomorphism of~$B_{\rho}(p_i)$ of class~$C^1$ which places~$\partial E_t$ into~$\Sigma\coloneqq \{x_n=0\}$ and~$\Omega$ 
into~$L\coloneqq \{\cos\theta\,x_n<\sin\theta\, x_{n-1}\}$ for some~$\theta\in\left(0,\frac\pi2\right]$ (we stress that we are using here the transversality assumption between~$E_t$ and~$\Omega$).

In this way,
\begin{equation}\label{KLasd456y}\begin{split}
&\int_{\partial E_t\cap \Omega^c\cap T_{2\delta}\cap B_{\rho}(p_i)}\operatorname{dist}(x,\Omega)^{-s}d\mathscr{H}^{n-1}_x=
\int_{\partial E_t\cap \Omega^c\cap T_{2\delta}\cap B_{\rho}(p_i)}\frac{d\mathscr{H}^{n-1}_x}{|x-\pi_x|^s}
\\&\qquad \le C
\int_{\Sigma\cap \{x_{n-1}\in(0,4\delta)\}
\cap \{ |(x_1,\dots,x_{n-2})|<2\rho\} }
\frac{dx_1\dots dx_{n-1}}{|x-\Pi_x|^s},
\end{split}
\end{equation}
for a suitable~$\Pi_x\in\partial L$ (which is the image of the old projection~$\pi_x$ under this diffeomorphism).

We now observe that the distance of~$x=(x_1,\dots,x_{n-1},0)$ to~$\partial L$ is~$\sin\theta\,x_{n-1}$ and therefore~$|x-\Pi_x|\ge \sin\theta\,x_{n-1}$.

Plugging this information into~\eqref{KLasd456y}, it follows that
\begin{eqnarray*}&&
\int_{\partial E_t\cap \Omega^c\cap T_{2\delta}\cap B_{\rho}(p_i)}\operatorname{dist}(x,\Omega)^{-s}d\mathscr{H}^{n-1}_x\\&&\qquad
\le\frac{C}{\sin^s\theta} \int_{\Sigma\cap \{x_{n-1}\in(0,4\delta)\}\cap \{ |(x_1,\dots,x_{n-2})|<2\rho\}}\frac{dx_1\dots dx_{n-1}}{x_{n-1}^s}\le \frac{C\rho^{n-2}\delta^{1-s}}{\sin^s\theta},\end{eqnarray*}
up to conveniently renaming~$C$.

{F}rom this and~\eqref{vckm87654eworu948}, we thus obtain that
\begin{eqnarray*}&&
\left|\int_{\partial E_t\cap \Omega^c\cap T_{2\delta}}\Big(
\A^{K_\delta}_{E_t,\Omega}(x)-\A^{K}_{E_t,\Omega}(x)\Big)\xi(x)d\mathscr{H}^{n-1}_x\right|
\le\frac{C\|{\mathcal{X}}\|_{L^\infty(\R^n,\R^n)}\rho^{n-2}\delta^{1-s}}{\sin^s\theta}.
\end{eqnarray*}
Using this and~\eqref{vcxnwqyet624ertdfgcg78iuhj}, we conclude that
\begin{align*}
&	\left|\int_{\partial E_t\cap \Omega^c}\Big(
\A^{K_\delta}_{E_t,\Omega}(x)-\A^{K}_{E_t,\Omega}(x)\Big)\xi(x)d\mathscr{H}^{n-1}_x\right|\\&\qquad\le
\left|\int_{\partial E_t\cap \Omega^c\cap T_{2\delta}}\Big(
\A^{K_\delta}_{E_t,\Omega}(x)-\A^{K}_{E_t,\Omega}(x)\Big)\xi(x)d\mathscr{H}^{n-1}_x\right|\\&\qquad\qquad
+\left|\int_{\partial E_t\cap \Omega^c\cap T_{2\delta}^c}\Big(\A^{K_\delta}_{E_t,\Omega}(x)-\A^{K}_{E_t,\Omega}(x)\Big)\xi(x)d\mathscr{H}^{n-1}_x\right|\\&\qquad\le C\|{\mathcal{X}}\|_{L^\infty(\R^n,\R^n)}\big(\delta^{1-s}+\delta^s\big),
\end{align*} up to relabelling~$C$.

Consequently,
\begin{align*}
	\lim_{\delta\searrow 0}\;\sup_{|t|<\varepsilon}\left|
	\int_{\partial E_t\cap \Omega^c}\Big(\A^{K_\delta}_{E_t,\Omega}(x)-\A^{K}_{E_t,\Omega}(x)\Big)\xi(x)d\mathscr{H}^{n-1}_x\right|\leq \lim_{\delta\searrow 0}\;\sup_{|t|<\varepsilon}C\|{\mathcal{X}}\|_\infty\delta^{\min\{s,1-s\}}=0.
\end{align*}
{F}rom this, \eqref{ytiuerw9865tyfgkjds} and~\eqref{ytiuerw9865tyfgkjds2}, we thus obtain that
$$  \lim_{\delta\to 0^+}\;\sup_{|t|<\varepsilon}\left|\phi_\delta'(t)
-\int_{\partial E_t\cap \Omega}\H^{K}_{E_t}(x)\xi(x)d\mathscr{H}^{n-1}_x-\int_{\partial E_t\cap \Omega^c}\A^{K}_{E_t,\Omega}(x)\xi(x)d\mathscr{H}^{n-1}_x\right|=0.$$

As a result, recalling also~\eqref{7985430fhcldsigehilehrjhgfd654ligerw} we conclude that, for~$|t|<\varepsilon$,
$$\phi'(t)=\int_{\partial E_t\cap \Omega}\H^K_{E_t}(x)\xi(x)d\mathscr{H}^{n-1}_x+\int_{\partial E_t\cap \Omega^c}\A^{K}_{E_t,\Omega}(x)\xi(x)d\mathscr{H}^{n-1}_x,$$
which gives the desired result in~\eqref{eq: first variation K} by taking~$t=0$.
\end{proof}
 
\section{The limit of the free boundary condition and proof of Theorem~\ref{prop: s to 1}}\label{sec: lim s1}

The proof of Theorem \ref{prop: s to 1} relies on a straightening procedure, based on two technical results which we state next. 

We will consider the following geometric setup: let $\Omega$ and $F$ be open sets of class $C^{1,1}$. 
Let~$T\colon \R^n\to\R^n$ be a diffeomorphism of~$\R^n$ of class~$C^{1,1}$
and suppose that~$T^{-1}(X)=X+S(X)$,
with~$S(0)=0$ and~$DS(0)=0$.

For $r\in(0,1)$, let $Q_r=(-r,r)^{n}$.
Let $U_r\coloneqq T^{-1}(Q_r)$ and assume that
\begin{align*}
	T(\Omega\cap U_r)&=
\{x_1<0\}\cap Q_r,\\
T(F\cap U_r)&=\{\omega\cdot x<0\}\cap Q_r
\end{align*}
with~$\omega=(-\sin\vartheta,0,\dots,0,\cos\vartheta)$, for some~$\vartheta\in\left(-\frac\pi2,\frac\pi2\right)$.

In the remainder of this section, we use uppercase letters $(X,Y,\dots)$ to denote points in the range of $T$ (that is, the straightened domain) and lowercase letters $(x,y,\dots)$ to denote points in the original domain. In particular, we use the notation~$X=T(x)$.

We denote 
\begin{align*}
	\mathscr{I}(X,Y)&\coloneqq \frac{X-Y}{|X-Y|}+\frac{DS(X)(X-Y)}{|X-Y|},\\
	\mathscr{W}(X,Y)&\coloneqq \frac{D^2S(X) [X-Y,X-Y]}{2|X-Y|},\\{\mbox{and }}\quad
	\mathscr{B}_s(X,Y)&\coloneqq -(n+s)|\mathscr{I}(X,Y)|^{-n-s-2}
\mathscr{I}(X,Y)\cdot\mathscr{W}(X,Y)
\end{align*}

\begin{lemma}\label{saxjoml3rf}
There exist~$r_0\in\left(0,\frac12\right)$ and~$C\ge1$, depending only on~$\Omega$, $F$, $\vartheta$, and~$n$, such that, if~$r\in(0,r_0)$,
for all~$x\in\partial F\cap \overline\Omega^c\cap U_{r}$,
\begin{equation*}
	\begin{split}
		\Bigg|
\int_{\Omega\cap U_r}\frac{\chi_F(y)}{|x-y|^{n+s}}dy&-\int_{\{Y_1<0\}\cap\{\omega\cdot Y<0\}\cap Q_r}|\mathscr{I}(X,Y)|^{-n-s}\frac{|\det DT^{-1}(Y)|}{|X-Y|^{n+s}}dY\\
	&-\int_{\{Y_1<0\}\cap\{\omega\cdot Y<0\}\cap Q_r}\mathscr{B}_s(X,Y)\frac{|\det DT^{-1}(Y)|}{|X-Y|^{n+s}}dY\Bigg|\\
&\leq \frac{Cr}{s}\left(X_1^{1-s}+\frac{(Cr)^{1-s}-X_1^{1-s}}{1-s}\right).
	\end{split}
\end{equation*}
\end{lemma}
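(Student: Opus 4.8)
The plan is to perform the change of variables $y=T^{-1}(Y)$ in the integral $\int_{\Omega\cap U_r}\frac{\chi_F(y)}{|x-y|^{n+s}}\,dy$ and to carefully Taylor-expand the resulting kernel. First I would record that, by the hypotheses on $T$, the domain of integration transforms cleanly: $T(\Omega\cap U_r)=\{X_1<0\}\cap Q_r$ and $T(F\cap U_r)=\{\omega\cdot X<0\}\cap Q_r$, so after the substitution the new domain of integration is exactly $\{Y_1<0\}\cap\{\omega\cdot Y<0\}\cap Q_r$, matching the right-hand side. The Jacobian factor $|\det DT^{-1}(Y)|$ appears automatically. The only genuine work is to control the kernel $|x-y|^{-n-s}=|T^{-1}(X)-T^{-1}(Y)|^{-n-s}$ in terms of $|X-Y|^{-n-s}$.

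The key computation is the expansion of $T^{-1}(X)-T^{-1}(Y)$. Writing $T^{-1}(X)=X+S(X)$, I have
$$ T^{-1}(X)-T^{-1}(Y)=(X-Y)+\bigl(S(X)-S(Y)\bigr), $$
and I would Taylor-expand $S(Y)$ around $X$ to second order: $S(Y)=S(X)+DS(X)(Y-X)+\tfrac12 D^2S(X)[Y-X,Y-X]+o(|X-Y|^2)$, with the remainder controlled by $\|D^3S\|$ (here the $C^{1,1}$ regularity of $T$, hence Lipschitz $D S$, forces me to be slightly careful — the second-order term is really a Lipschitz estimate on $DS$, and $\mathscr W$ is the ``model'' quadratic term). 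Dividing by $|X-Y|$ gives
$$ \frac{T^{-1}(X)-T^{-1}(Y)}{|X-Y|}=\mathscr{I}(X,Y)-\mathscr{W}(X,Y)+(\text{error of size }O(|X-Y|)), $$
using the definitions of $\mathscr{I}$ and $\mathscr{W}$ in the paper (the sign of $\mathscr W$ is chosen so this holds; note $DS(X)(X-Y)=-DS(X)(Y-X)$). Then I would expand the $(-n-s)/2$ power of the squared norm: for a vector $v$ and a small perturbation $w$,
$$ |v+w|^{-n-s}=|v|^{-n-s}-(n+s)|v|^{-n-s-2}(v\cdot w)+O(|v|^{-n-s}|w|^2), $$
applied with $v=\mathscr{I}(X,Y)$ (which is bounded and bounded away from $0$ for $r$ small, since $DS(0)=0$ and $DS$ is continuous) and $w=-\mathscr{W}(X,Y)$, which is $O(|X-Y|)$ by the $C^{1,1}$ bound. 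The linear term reproduces exactly $\mathscr{B}_s(X,Y)$, and the quadratic remainder is $O(|X-Y|^2)$ times $|X-Y|^{-n-s}\cdot|X-Y|^{-1}\cdot|X-Y|$... — more precisely, collecting everything, the error in approximating $|x-y|^{-n-s}$ by $|X-Y|^{-n-s}\bigl(|\mathscr I|^{-n-s}+\mathscr B_s\bigr)$ is bounded by $C\,|X-Y|^{-n-s+1}$ uniformly in $U_r$.

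It then remains to integrate this pointwise error bound over $\{Y_1<0\}\cap\{\omega\cdot Y<0\}\cap Q_r$. The point $X$ has $X_1>0$ (since $x\in\overline\Omega^c$, so $X\in\{X_1>0\}$), and $|X-Y|\ge X_1$ for $Y$ in the half-space $\{Y_1<0\}$; also $|X-Y|\le Cr$ on $Q_r$. So I must estimate
$$ \int_{\{Y_1<0\}\cap Q_r}|X-Y|^{-n-s+1}\,dY. $$
Slicing in the $Y_1$ variable and using that the cross-sectional integral of $|X-Y|^{-n-s+1}$ over the $(n-1)$-dimensional slice at height $Y_1$ behaves like $C\,\mathrm{dist}(X,\text{slice})^{-s}=C\,(X_1-Y_1)^{-s}$ (a standard one-dimensional reduction, valid because $-n-s+1-(n-1)=-s\in(-1,0)$), I get
$$ C\int_{-Cr}^{0}(X_1-Y_1)^{-s}\,dY_1 \;=\; \frac{C}{1-s}\Bigl((X_1+Cr)^{1-s}-X_1^{1-s}\Bigr)\;\le\;\frac{C}{1-s}\Bigl((Cr)^{1-s}-X_1^{1-s}\Bigr)+C X_1^{1-s}\cdot(\text{harmless}), $$
which, after bookkeeping and absorbing constants, is exactly the claimed bound $\frac{Cr}{s}\bigl(X_1^{1-s}+\frac{(Cr)^{1-s}-X_1^{1-s}}{1-s}\bigr)$ (the factor $r$ and the $1/s$ come from more careful tracking of where the extra power of $|X-Y|$ versus the $r$-scale enters; one splits the region $\{|X-Y|\le$ something$\}$ versus the far region and uses $|X-Y|^{-n-s+1}\le (Cr)^{?}|X-Y|^{-n-s+\cdots}$ appropriately). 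I expect the main obstacle to be precisely this last bookkeeping: getting the error in the exact form stated, with the right interplay of the $r$, $s$, and $1-s$ factors and the correct role of $X_1$, rather than merely a qualitative $O(r^{1-s})$ bound. The Taylor expansion of the kernel is routine; the delicate part is the anisotropic integration near the singular set $\{X_1=0\}$ where $X$ sits, combined with keeping the constant $C$ independent of $s$ in the right places.
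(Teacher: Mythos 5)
Your overall strategy (change of variables, Taylor expansion of the kernel around $|X-Y|^{-n-s}$, then integration of the pointwise error) is the same as the paper's, but there is a genuine quantitative gap at the two places you yourself flag as ``bookkeeping'', and it cannot be absorbed into constants. First, the pointwise error: you expand $T^{-1}(X)-T^{-1}(Y)$ only to first-order accuracy, writing $\tfrac{T^{-1}(X)-T^{-1}(Y)}{|X-Y|}=\mathscr{I}-\mathscr{W}+O(|X-Y|)$; since $\mathscr{W}$ is itself $O(|X-Y|)$, at this level of precision the term $\mathscr{B}_s$ carries no information, and the best you can claim is an error of size $O(|X-Y|^{1-n-s})$, which is what you state. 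The paper's proof needs (and asserts, via the second-order expansion of $S$ with an $O(|X-Y|^3)$ remainder) an error of size $O(|X-Y|^{2-n-s})$, i.e.\ \emph{two} powers better than the kernel; this extra power is exactly what later produces the factor $r$ in the right-hand side. With your weaker pointwise bound the stated estimate is false to prove by integration: your own final formula is of size
\begin{equation*}
\frac{C}{s(1-s)}\Big((X_1+Cr)^{1-s}-X_1^{1-s}\Big)\;\sim\;\frac{C\,r^{1-s}}{s(1-s)}\quad\text{for small }X_1,
\end{equation*}
whereas the claimed bound is at most of size $\tfrac{Cr}{s}\cdot\tfrac{(Cr)^{1-s}}{1-s}\sim\tfrac{C\,r^{2-s}}{s(1-s)}$ there; the two differ by a factor $\sim 1/r$, which no choice of an $r$-independent constant $C$ can repair.

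Second, even granted the improved error $O(|X-Y|^{2-n-s})$, the precise shape of the right-hand side does not come out of a single slicing in $Y_1$ as you propose. In the paper one integrates out only the $n-2$ variables tangent to $\partial E\cap\partial\Omega$ (this is what produces the $\tfrac1s$), reducing to a two-dimensional integral $\int_{(-r,r)^2}|\zeta-\lambda|^{-s}\,d\lambda$ with $\zeta=(X_1,X_n)$; then one uses the geometric facts $\omega_0\cdot\zeta=0$ and $\zeta_1\ge0$ (i.e.\ $x\in\partial F\cap\overline\Omega^c$) together with $\lambda_1<0$ to prove $|\zeta-\lambda|^2\ge\lambda_1^2+\zeta_1^2$, so that the $\lambda_2$-integration over an interval of length $2r$ yields the factor $r$ and the $\lambda_1$-integration, split at $\lambda_1=\zeta_1$, yields exactly $X_1^{1-s}+\tfrac{r^{1-s}-X_1^{1-s}}{1-s}$. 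Your reduction over all $n-1$ transverse directions at once cannot see this decoupling and so cannot produce either the factor $r$ or the $X_1^{1-s}$ term. (Your remark that $C^{1,1}$ regularity does not justify a genuine second-order Taylor expansion with cubic remainder is a fair observation about the paper's hypotheses, but retreating to the first-order estimate, as you do, gives up precisely the gain the lemma requires.)
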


\begin{proof}
	By a change of variable, we see that
\begin{equation}\label{URcw208f78M}\begin{split}
\int_{\Omega\cap U_r}\frac{\chi_F(y)}{|x-y|^{n+s}}\,dy&=
\int_{\{Y_1<0\}\cap\{\omega\cdot Y<0\}\cap Q_r}\frac{|\det DT^{-1}(Y)|}{|T^{-1}(X)-T^{-1}(Y)|^{n+s}}\,dY.
\end{split}
\end{equation}

We observe that, for $|X-Y|$ small,
\begin{equation}\label{03-44t-2wd}\begin{split}&
	|T^{-1}(X)-T^{-1}(Y)|^{-n-s}=|X-Y+S(X)-S(Y)|^{-n-s}\\&\quad
	=\left|X-Y+DS(X)(X-Y)-\frac{D^2S(X)}2[X-Y,X-Y]+O(|X-Y|^3)\right|^{-n-s}\\
	&\quad=|X-Y|^{-n-s}\left|\mathscr{I}(X,Y)+\mathscr{W}(X,Y)+O(|X-Y|^2)\right|^{-n-s}.
\end{split}
\end{equation}
Moreover, we have that, for small~$r$ and~$X\in Q_r$,
$$|\mathscr{I}(X,Y)|\ge\frac12.$$
Also, for $|X-Y|$ small,
\begin{align*}
\mathscr{W}(X,Y)=O(|X-Y|).
\end{align*}
Thus,
\begin{eqnarray*}
&&|\mathscr{I}(X,Y)+\mathscr{W}(X,Y)+O(|X-Y|^2)|^{-n-s}\\&&\quad=|\mathscr{I}(X,Y)|^{-n-s}-(n+s)|\mathscr{I}(X,Y)|^{-n-s-2}
\mathscr{I}(X,Y)\cdot\mathscr{W}(X,Y)+O(|X-Y|^{2}).
\end{eqnarray*}

Using this information into~\eqref{03-44t-2wd}, we obtain that
\begin{align}\label{Tr5p}
|T^{-1}(X)-T^{-1}(Y)|^{-n-s}=\frac{|\mathscr{I}(X,Y)|^{-n-s}}{|X-Y|^{n+s}}+\frac{\mathscr{B}_s(X,Y)}{|X-Y|^{n+s}}+O(|X-Y|^{2-n-s}).
\end{align}

Next, we set 
\begin{align*}
	\Xi(x)\coloneqq &\int_{\Omega\cap U_r}\frac{\chi_F(y)}{|x-y|^{n+s}}dy-\int_{\{Y_1<0\}\cap\{\omega\cdot Y<0\}\cap Q_r}|\mathscr{I}(X,Y)|^{-n-s}\frac{|\det DT^{-1}(Y)|}{|X-Y|^{n+s}}dY\\
	&\quad-\int_{\{Y_1<0\}\cap\{\omega\cdot Y<0\}\cap Q_r}\mathscr{B}_s(X,Y)\frac{|\det DT^{-1}(Y)|}{|X-Y|^{n+s}}dY.
\end{align*}
It follows from~\eqref{URcw208f78M} and~\eqref{Tr5p} that
\begin{equation}\label{I7DqidMNa34}
|\Xi(x)|\le C\int_{\{Y_1<0\}\cap\{\omega\cdot Y<0\}\cap Q_r}\frac{dY}{|X-Y|^{n+s-2}}.
\end{equation}

Now we use the notation~$Y=(Y_1,Y'',Y_n)\in\R\times\R^{n-2}\times\R$, $
\omega_0=(-\sin\vartheta,\cos\vartheta)
$, and~$\zeta\coloneqq (X_1,X_n)$,
and we find that
\begin{equation}\label{s0dvkp9ipEDCwV3}\begin{split}
&\int_{\{Y_1<0\}\cap\{\omega\cdot Y<0\}\cap Q_r}\frac{dY}{|X-Y|^{n+s-2}}\\&\quad=
\int_{\{Y_1<0\}\cap\{\omega\cdot Y<0\}\cap Q_r}\frac{dY}{\big(|(X_1,X_n)-(Y_1,Y_n)|^2+|X''-Y''|^2 \big)^{\frac{n+s-2}2}}\\&\quad\le 
\int_{{(\lambda,\mu)\in(-r,r)^2\times\R^{n-2}}\atop{
\{\lambda_1<0\}\cap\{\omega_0\cdot \lambda<0\}}}\frac{d\lambda\,d\mu}{\big(|\zeta-\lambda|^2+|\mu|^2 \big)^{\frac{n+s-2}2}}\\&\quad=
\int_{{(\lambda,\ell)\in(-r,r)^2\times\R^{n-2}}\atop{
\{\lambda_1<0\}\cap\{\omega_0\cdot \lambda<0\}}}\frac{d\lambda\,d\ell}{|\zeta-\lambda|^{s}\big(1+|\ell|^2 \big)^{\frac{n+s-2}2}}\\&\quad\le \frac{C}s
\int_{{\lambda\in(-r,r)^2}\atop{
\{\lambda_1<0\}\cap\{\omega_0\cdot \lambda<0\}}}\frac{d\lambda}{|\zeta-\lambda|^{s}}.
\end{split}\end{equation}

We also observe that~$X\in T(\partial F\cap U_r)$ and thus~$0=\omega\cdot X=\omega_0\cdot\zeta$,
yielding that\begin{equation*} \zeta_2=\zeta_1\tan\vartheta.\end{equation*}
Hence, if~$\varpi_0:=(\cos\vartheta,\sin\vartheta)$,
$$ \zeta\cdot\varpi_0=\zeta_1\cos\vartheta+\zeta_2\sin\vartheta=\frac{\zeta_1}{\cos\vartheta}.$$

Additionally, since~$X\in T(\Omega^c\cap U_r)$, we have that~$X_1\ge0$, thus~$\zeta_1\ge0$,
and consequently~$\frac{\zeta_1}{\cos\vartheta}\ge0$.
In particular, if~$\lambda_1<0$,
$$ \frac{2\zeta_1(\lambda\cdot\varpi_0)}{\cos\vartheta}
=2\zeta_1(\lambda_1+\lambda_2\tan\vartheta)\le2
\zeta_1\lambda_2\tan\vartheta\le \zeta_1^2 \tan^2\vartheta+\lambda_2^2,
$$
where the last step relies on the Cauchy--Schwarz inequality.

As a result, since
the vectors~$\omega_0$ and~$\varpi_0$ constitute an orthonormal basis of~$\R^2$, if~$\lambda_1<0$,
\begin{eqnarray*}&&
|\zeta-\lambda|^2=\big((\zeta-\lambda)\cdot\omega_0\big)^2+\big((\zeta-\lambda)\cdot\varpi_0\big)^2=
(\lambda\cdot\omega_0)^2+\left(\frac{\zeta_1}{\cos\vartheta}-\lambda\cdot\varpi_0\right)^2\\&&\qquad=
|\lambda|^2+\frac{\zeta_1^2}{\cos^2\vartheta}-\frac{2\zeta_1(\lambda\cdot\varpi_0)}{\cos\vartheta}\ge
|\lambda|^2+\frac{\zeta_1^2}{\cos^2\vartheta}-\zeta_1^2 \tan^2\vartheta-\lambda_2^2\\&&\qquad=\lambda_1^2+\zeta_1^2.
\end{eqnarray*}
Combining this and~\eqref{s0dvkp9ipEDCwV3}, we see that
\begin{equation*}\begin{split}
&\int_{\{Y_1<0\}\cap\{\omega\cdot Y<0\}\cap Q_r}\frac{dY}{|X-Y|^{n+s}}\le\frac{ Cr}s
\int_0^{r}\frac{d\lambda_1}{(\lambda_1^2+\zeta_1^2)^{\frac{s}2}}\\&\qquad\le \frac{Cr}s\left(
\int_{0}^{\zeta_1}\frac{d\lambda_1}{\zeta_1^{s}}+
\int_{\zeta_1}^r\frac{d\lambda_1}{\lambda_1^{s}}
\right)\le \frac{Cr}s\left(\zeta_1^{1-s}+\frac{r^{1-s}-\zeta_1^{1-s}}{1-s}\right).
\end{split}\end{equation*}

This, in tandem with~\eqref{I7DqidMNa34}, returns that
\begin{equation*}
|\Xi(x)|\le \frac{Cr}s\left(\zeta_1^{1-s}+\frac{r^{1-s}-\zeta_1^{1-s}}{1-s}\right),
\end{equation*}
as desired.
\end{proof}

\begin{lemma}\label{lem:ehfwbuebd}
Let $E$, $\Omega$, ${\mathcal{X}}$ and~$g$ be as in Theorem~\ref{prop: s to 1}. 

Then, there exists~$r_0\in\left(0,\frac12\right)$, depending only on~$\Omega$, $E$, and~$n$, such that, if~$r\in(0,r_0)$,
	\begin{eqnarray*}
	&&	\lim_{s\nearrow1}
\int_{\partial E\cap \Omega^c\cap U_r}\mathcal{A}^s_{E,\Omega}(x) {\mathcal{X}}(x)\cdot \nu_{\partial E}(x)\,d\mathscr{H}^{n-1}_x\\&&\qquad=\int_{\partial E\cap \partial \Omega\cap U_r}g(\psi_{x'})\mathcal{X}(x')\cdot\nu_{\partial E}(x')\,d{\mathscr{H}}^{n-2}_{x'}+O(r^{n-1}),
	\end{eqnarray*}
	where $\psi_{x'}$ is the intersection angle between the affine hyperplanes~$T_{x'}(\partial E)$ and~$T_{x'}(\partial \Omega)$.
\end{lemma}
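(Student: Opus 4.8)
The plan is to localise the free boundary integral near a single point $x'\in\partial E\cap\partial\Omega\cap U_r$, straighten both $\partial E$ and $\partial\Omega$ simultaneously via the diffeomorphism $T$ of class $C^{1,1}$, and then identify the leading order term in the limit $s\nearrow 1$ as the integral over the straightened interface, whose contribution concentrates on the codimension-two set $\partial E\cap\partial\Omega$. Concretely, for $x\in\partial E\cap\overline\Omega^c\cap U_r$ I would write
\begin{equation*}
\mathcal{A}^s_{E,\Omega}(x)=c_{n,s}\left(\int_{\Omega\cap U_r}\frac{\chi_{E^c}(y)-\chi_E(y)}{|x-y|^{n+s}}\,dy+\int_{\Omega\cap U_r^c}\frac{\chi_{E^c}(y)-\chi_E(y)}{|x-y|^{n+s}}\,dy\right),
\end{equation*}
and observe that the second integral is bounded uniformly (its kernel is integrable away from $x$) while $c_{n,s}\sim(1-s)\frac{16n}{\omega_n}\to 0$, so only the first, singular piece survives and must be analysed with care. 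Applying Lemma~\ref{saxjoml3rf} (and its analogue with $\chi_{E^c}$ in place of $\chi_F$), I replace the first integral, up to an error controlled by $\frac{Cr}{s}\big(X_1^{1-s}+\frac{(Cr)^{1-s}-X_1^{1-s}}{1-s}\big)$, by the model integrals over $\{Y_1<0\}\cap\{\omega\cdot Y<0\}\cap Q_r$ with the kernels $|\mathscr{I}(X,Y)|^{-n-s}|X-Y|^{-n-s}$ and $\mathscr{B}_s(X,Y)|X-Y|^{-n-s}$ times $|\det DT^{-1}(Y)|$.

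Next I would carry out the multiplication by $c_{n,s}\sim(1-s)\frac{16n}{\omega_n}$ and integrate against ${\mathcal{X}}(x)\cdot\nu_{\partial E}(x)$ over $\partial E\cap\Omega^c\cap U_r$. Here the key point is a scaling/concentration computation: writing $X=T(x)$ with $X_1\geq 0$ playing the role of the distance from the straightened $\partial\Omega$, and slicing $\partial E\cap\Omega^c\cap U_r$ by the value of $X_1$, the model integral behaves like $C(\psi)\,X_1^{-s}$ for the leading $|\mathscr{I}|^{-n-s}$ term (as in Lemma~\ref{lem: estimate K near Omega}), while the $\mathscr{B}_s$ correction and the $DS$-perturbation inside $\mathscr{I}$ contribute lower-order terms because $DS(0)=0$ and $D^2S$ is bounded, so these pick up extra powers of $r$ or of $(1-s)$. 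Then $c_{n,s}\int_{0}^{\sim r}X_1^{-s}\,dX_1\sim(1-s)\cdot\frac{r^{1-s}}{1-s}=r^{1-s}\to r$-order constant, and more precisely the $X_1$-integral of $X_1^{-s}$ over $(0,\varrho)$ times $(1-s)$ tends to $1$ as $s\nearrow1$; so the transverse integration collapses and leaves a $\mathscr{H}^{n-2}$-integral over $\partial E\cap\partial\Omega\cap U_r$, with integrand the limiting constant $\lim_{s\nearrow1}c_{n,s}\int_{\{Z_1<0\}\cap\{\omega_0\cdot Z<0\}}\frac{dZ}{|Z-e_1|^{n+s}}$ suitably normalised. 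Recognising this constant as $g(\psi_{x'})$ — after the change of variables that sends the straightened $\partial E$ to a hyperplane through $e_1$ and records the intersection angle $\psi_{x'}=\psi_\vartheta$ between $T_{x'}(\partial E)$ and $T_{x'}(\partial\Omega)$ — gives exactly the claimed formula, with the $O(r^{n-1})$ accounting for the bounded far-field piece $\int_{\Omega\cap U_r^c}$ (times $c_{n,s}$, which is harmless) integrated over a region of $\mathscr{H}^{n-1}$-measure $O(r^{n-1})$, together with the Lemma~\ref{saxjoml3rf} error terms, which after multiplication by $c_{n,s}$ and transverse integration are also $O(r^{n-1})$ uniformly in $s$ close to $1$.

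I expect the main obstacle to be the uniformity of all these estimates as $s\nearrow 1$ \emph{simultaneously} with the passage to the limit under the $\mathscr{H}^{n-1}$-integral sign: the error in Lemma~\ref{saxjoml3rf} contains the factor $\frac{1}{s(1-s)}$, which blows up as $s\nearrow1$, and one has to check that after multiplying by $c_{n,s}\sim(1-s)$ and integrating the $\operatorname{dist}^{1-s}$-type weights over the $C^{1,1}$ graph $\partial E$ one still gets a bound that is $O(r^{n-1})$ with a constant independent of $s$ near $1$. This forces a fairly delicate bookkeeping of the competing powers of $(1-s)$, $r$, and $X_1$, and the use of the transversality hypothesis~\eqref{eq: unif transversality} (equivalently $\vartheta$ bounded away from $\pm\pi/2$) to keep $\sin\vartheta$, $\cos\vartheta$ uniformly positive so that the slicing coarea formula and the bound $|x-\Pi_x|\geq\sin\vartheta\,X_1$ remain effective. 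Once that bookkeeping is in place, the identification of the limit constant with $g(\psi_{x'})$ is a routine change of variables, and the Dominated/Monotone Convergence Theorem applied slice-by-slice in $X_1$ finishes the proof.
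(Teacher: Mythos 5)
Your proposal follows essentially the same route as the paper's proof: localisation to $U_r$ with the far-field contribution killed by $c_{n,s}\to 0$, the straightening estimate of Lemma~\ref{saxjoml3rf} applied to both $E$ and $E^c$, concentration of the transverse integral via $(1-s)\int_0^{r}X_1^{-s}\,dX_1\to 1$ (the paper's substitution $\tau=\zeta_1^{1-s}$ after the coarea slicing), identification of the limiting angle-dependent constant with $g(\psi_{x'})$ after rescaling to the point $e_1$, and $O(r^{n-1})$ bookkeeping for the $\mathscr{I}$-perturbation and $\mathscr{B}_s$ corrections using transversality. Apart from sketch-level imprecisions (the literal expression $\lim_{s\nearrow1}c_{n,s}\int\cdots$ would vanish, the normalisation being instead absorbed in the transverse integration exactly as you indicate, and the $O(r^{n-1})$ in the paper comes from the correction terms and from replacing the frozen angle and flat measure by $\psi_{x'}$ and $d\mathscr{H}^{n-2}$ on $\partial E\cap\partial\Omega$, rather than from the far-field piece, which vanishes in the limit), this matches the paper's argument.
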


\begin{proof}
We point out that, if~$x\in \partial E\cap U_{r}$,
\begin{align*}
	|\A^s_{E,\Omega\setminus U_r}(x)|&\le Cs(1-s)\left|
\int_{\Omega\setminus U_r}\frac{\chi_{E^c}(y)-\chi_{E}(y)}{|x-y|^{n+s}}\,dy\right|\\
&\le
Cs(1-s)\int_{\R^n\setminus B_{r}}\frac{dz}{|z|^{n+s}}\\
&\le \frac{C(1-s)}{r^s}.
\end{align*}
Therefore, by the Dominated Convergence Theorem,
\begin{equation*}\lim_{s\nearrow1}
\int_{\partial E\cap \Omega^c\cap U_r}\mathcal{A}^s_{E,\Omega\setminus U_r}(x) {\mathcal{X}}(x)\cdot \nu_{\partial E}(x)\,d\mathscr{H}^{n-1}_x=0.
\end{equation*}

As a result, since
$$\mathcal{A}^s_{E,\Omega}=
\mathcal{A}^s_{E,\Omega\setminus U_r}+\mathcal{A}^s_{E,\Omega\cap U_r},$$
we conclude that
\begin{equation}\label{012oejfgrg203hyhaswderg254t}\begin{split}&\lim_{s\nearrow1}
\int_{\partial E\cap \Omega^c\cap U_r}\mathcal{A}^s_{E,\Omega}(x) {\mathcal{X}}(x)\cdot \nu_{\partial E}(x)\,d\mathscr{H}^{n-1}_x\\&\quad
=\lim_{s\nearrow1}
\int_{\partial E\cap \Omega^c\cap U_r}\mathcal{A}^s_{E,\Omega\cap U_r}(x) {\mathcal{X}}(x)\cdot \nu_{\partial E}(x)\,d\mathscr{H}^{n-1}_x.
\end{split}
\end{equation}

Now, we utilize the coarea formula on manifolds, see~\cite[Theorem~3.13]{MR1775760}, and we have that
\begin{equation}\label{0owdfjv:Pi1wkdn}\begin{split}&
\int_{\partial E\cap \Omega^c\cap U_{r}}\mathcal{A}^s_{E,\Omega\cap U_r}(x) {\mathcal{X}}(x)\cdot \nu_{\partial E}(x)\,d\mathscr{H}^{n-1}_x\\&\qquad
=
\int_{\{\omega\cdot X=0\}\cap\{X_1>0\}\cap Q_{r}}\mathcal{A}^s_{E,\Omega\cap U_r}(x) 
\mathscr{V}(x)\,d\mathscr{H}^{n-1}_X,
\end{split}
\end{equation}where the notation~$X=T(x)$ is understood and
we have set, for convenience, 
\begin{equation*}
	\mathscr{V}(x)\coloneqq \frac{{\mathcal{X}}(x)\cdot \nu_{\partial E}(x)}{|\det DT\vert_{\partial E}(x)|},
\end{equation*}
with $T\vert_{\partial E}\colon\partial E\cap U_{r}\to \{\omega\cdot X=0\}\cap Q_{r}$ being the restriction of~$T$ to~$\partial E $.

Furthermore, we apply Lemma~\ref{saxjoml3rf} with~$F\coloneqq E$ and~$F\coloneqq E^c$
(in the latter case, $\omega$ gets replaced by~$-\omega$). Thus, 
up to renaming~$C>0$, we find that
\begin{equation}\label{0qoj9375wdlfv203rt-ty}
\begin{split}&
\big|\mathcal{A}^s_{E,\Omega\cap U_r}(x)-c_{n,s}\big({\mathcal{J}}_1(X)+{\mathcal{J}}_2(X)\big)\big|\\&\qquad=c_{n,s}
\left|
\int_{\Omega\cap U_r}\frac{\chi_{E^c}(y)-\chi_E(y)}{|x-y|^{n+s}}\,dy-{\mathcal{J}}_1(X)-{\mathcal{J}}_2(X)
\right|\\&\qquad\le
C(1-s)r\left(X_1^{1-s}+\frac{(Cr)^{1-s}-X_1^{1-s}}{1-s}\right)
,\end{split}
\end{equation}
where 
\begin{align*}
	\mathcal{J}_1(X)&\coloneqq \int_{\{Y_1<0\}\cap\{\omega\cdot Y>0\}\cap Q_r}|\mathscr{I}(X,Y)|^{-n-s}\frac{|\det DT^{-1}(Y)|}{|X-Y|^{n+s}}\,dY\\
	&\qquad-\int_{\{Y_1<0\}\cap\{\omega\cdot Y<0\}\cap Q_r}|\mathscr{I}(X,Y)|^{-n-s}\frac{|\det DT^{-1}(Y)|}{|X-Y|^{n+s}}\,dY
\end{align*}
and 
\begin{align*}
	\mathcal{J}_2(X)&\coloneqq \int_{\{Y_1<0\}\cap\{\omega\cdot Y>0\}\cap Q_r}\mathscr{B}_s(X,Y)\frac{|\det DT^{-1}(Y)|}{|X-Y|^{n+s}}\,dY\\
	&\qquad-\int_{\{Y_1<0\}\cap\{\omega\cdot Y<0\}\cap Q_r}\mathscr{B}_s(X,Y)\frac{|\det DT^{-1}(Y)|}{|X-Y|^{n+s}}\,dY.
\end{align*}

As a consequence of~\eqref{0owdfjv:Pi1wkdn} and~\eqref{0qoj9375wdlfv203rt-ty}, we find that,
up to renaming~$C>0$ line after line,
\begin{align*}
	\Bigg|&\int_{\partial E\cap \Omega^c\cap U_{r}}\mathcal{A}^s_{E,\Omega\cap U_r}(x) {\mathcal{X}}(x)\cdot \nu_{\partial E}(x)\,d\mathscr{H}^{n-1}_x\\
	&\qquad-c_{n,s}\int_{\{\omega\cdot X=0\}\cap\{X_1>0\}\cap Q_{r}}\big(\mathcal{J}_1(X)+\mathcal{J}_2(X)\big)\mathscr{V}(x)\,d\mathscr{H}^{n-1}_X\Bigg|\\ 
	&\le C(1-s)r\int_{\{\omega\cdot X=0\}\cap\{X_1>0\}\cap Q_{r}}
	\left(X_1^{1-s}+\frac{(Cr)^{1-s}-X_1^{1-s}}{1-s}\right)|\mathscr{V}(x)| \,d\mathscr{H}^{n-1}_X	\\
	&\leq Cr\int_{\{\omega\cdot X=0\}\cap\{X_1>0\}\cap Q_{r}\cap 		\operatorname{spt} {\mathcal{X}}}
\big((1-s)r^{1-s}+(Cr)^{1-s}-X_1^{1-s}\big)\,d\mathscr{H}^{n-1}_X.
\end{align*}
We observe that this quantity
is infinitesimal as~$s\nearrow1$, thanks to the Dominated Convergence Theorem, and as a result we obtain that
\begin{equation}\label{ahasiw}
	\begin{split}
		&\lim_{s\nearrow1}\int_{\partial E\cap \Omega^c\cap U_{r}}\mathcal{A}^s_{E,\Omega\cap U_r}(x) {\mathcal{X}}(x)\cdot \nu_{\partial E}(x)\,d\mathscr{H}^{n-1}_x\\
	&=\lim_{s\nearrow1}c_{n,s}\int_{\{\omega\cdot X=0\}\cap\{X_1>0\}\cap Q_{r}}\big(\mathcal{J}_1(X)+\mathcal{J}_2(X)\big)\mathscr{V}(x)\,d\mathscr{H}^{n-1}_X.
	\end{split}
\end{equation}

We remark that, thanks to the fact that~$DS(0)=0$, we have that,
if~$X\in Q_r$,
\begin{equation*}
\big| \,|\mathscr{I}(X,Y)|^{-n-s}- 1\big|\le C r,
\end{equation*}
for some~$C>0$ uniform with respect to~$X$, $Y\in Q_r$.

Hence, setting~$
\mathcal{J}_\ast(X):=\mathcal{J}^1_\ast(X)-\mathcal{J}^2_\ast(X)$,
with
\begin{eqnarray*}
	\mathcal{J}^1_\ast(X)&\coloneqq& \int_{\{Y_1<0\}\cap\{\omega\cdot Y>0\}\cap Q_r}\frac{|\det DT^{-1}(Y)|}{|X-Y|^{n+s}}\,dY\\
	{\mbox{and }}\quad \mathcal{J}^2_\ast(X)&\coloneqq& 
\int_{\{Y_1<0\}\cap\{\omega\cdot Y<0\}\cap Q_r}\frac{|\det DT^{-1}(Y)|}{|X-Y|^{n+s}}\,dY,
\end{eqnarray*}
we find that
\begin{equation}\label{widhiwudeh}\begin{split}&
|\mathcal{J}_1(X)-\mathcal{J}_\ast(X)|\\
&=\Bigg|
\int_{\{Y_1<0\}\cap\{\omega\cdot Y>0\}\cap Q_r}|\mathscr{I}(X,Y)|^{-n-s}\frac{|\det DT^{-1}(Y)|}{|X-Y|^{n+s}}\,dY-\mathcal{J}^1_\ast(X)\\
	&\qquad-\int_{\{Y_1<0\}\cap\{\omega\cdot Y<0\}\cap Q_r}|\mathscr{I}(X,Y)|^{-n-s}\frac{|\det DT^{-1}(Y)|}{|X-Y|^{n+s}}\,dY
	+\mathcal{J}^2_\ast(X)\Bigg|\\
&\leq
\left|
\int_{\{Y_1<0\}\cap\{\omega\cdot Y>0\}\cap Q_r}\big|\,|\mathscr{I}(X,Y)|^{-n-s}-1\big| \frac{|\det DT^{-1}(Y)|}{|X-Y|^{n+s}}\,dY\right|\\&\qquad+\left|
\int_{\{Y_1<0\}\cap\{\omega\cdot Y<0\}\cap Q_r}\big|\,
|\mathscr{I}(X,Y)|^{-n-s}-1\big|
\frac{|\det DT^{-1}(Y)|}{|X-Y|^{n+s}}\,dY\right|\\
&\leq Cr
\int_{\{Y_1<0\}\cap\{\omega\cdot Y>0\}\cap Q_r}\frac{|\det DT^{-1}(Y)|}{|X-Y|^{n+s}}\,dY\\&\qquad+Cr
\int_{\{Y_1<0\}\cap\{\omega\cdot Y<0\}\cap Q_r}
\frac{|\det DT^{-1}(Y)|}{|X-Y|^{n+s}}\,dY.
\end{split}
\end{equation} 

Furthermore, by~\eqref{012oejfgrg203hyhaswderg254t} and~\eqref{ahasiw} we get that 
\begin{equation}\label{vbcnxieowyr4uiw}
	\lim_{s\nearrow1}
\int_{\partial E\cap \Omega^c\cap U_r}\mathcal{A}^s_{E,\Omega}(x) {\mathcal{X}}(x)\cdot \nu_{\partial E}(x)\,d\mathscr{H}^{n-1}_x=\mathrm{I}+\mathrm{II}+\mathrm{III}
\end{equation}
where 
\begin{align*}
	\mathrm{I}&:=\lim_{s\nearrow1}c_{n,s}\int_{\{\omega\cdot X=0\}\cap\{X_1>0\}\cap Q_{r}}\mathcal{J}_\ast(X)\mathscr{V}(x)\,d\mathscr{H}^{n-1}_X,\\
	\mathrm{II}&:=\lim_{s\nearrow1}c_{n,s}\int_{\{\omega\cdot X=0\}\cap\{X_1>0\}\cap Q_{r}}\big(\mathcal{J}_1(X)-\mathcal{J}_\ast(X)\big)\mathscr{V}(x)\,d\mathscr{H}^{n-1}_X,\\
{\mbox{and}}\qquad	\mathrm{III}&:=\lim_{s\nearrow1}c_{n,s}\int_{\{\omega\cdot X=0\}\cap\{X_1>0\}\cap Q_{r}}\mathcal{J}_2(X)\mathscr{V}(x)\,d\mathscr{H}^{n-1}_X.
\end{align*}

We now write~$\mathrm{I}=\mathrm{I}_1-\mathrm{I}_2$, where,
for~$k\in \{1,2\}$,
\begin{equation*}
	\mathrm{I}_k :=
\lim_{s\nearrow1}c_{n,s}\int_{\{\omega\cdot X=0\}\cap\{X_1>0\}\cap Q_{r}}
{\mathcal{J}}^k_\ast(X)\mathscr{V}(x)\,d\mathscr{H}^{n-1}_X.
\end{equation*}
Note that, for every~$\rho_0>0$,
\begin{equation*}
\lim_{s\nearrow1}c_{n,s}\int_{\{\omega\cdot X=0\}\cap\{X_1>0\}\cap Q_{r}}
\left(
\int_{\R^n\setminus B_{\rho_0}(X)}
\frac{dY}{|X-Y|^{n+s}}
\right)|\mathscr{V}(x)|\, d\mathscr{H}^{n-1}_X\le \lim_{s\nearrow1}\frac{C(1-s)}{s\rho_0^s}=0
\end{equation*}
and therefore
\begin{eqnarray*}&&
	\lim_{s\nearrow1}c_{n,s}\int_{\{\omega\cdot X=0\}\cap\{X_1>0\}\cap Q_{r}}\mathcal{J}^1_\ast(X)\mathscr{V}(x)\,
	d\mathscr{H}^{n-1}_X,\\
	&&\qquad=\lim_{s\nearrow 1}c_{n,s}\int_{{\{\omega\cdot X=0\}}\atop{\{X_1>0\}\cap Q_{r/2}}}
\left(
\int_{{\{Y_1<0\}}\atop{\{\omega\cdot Y>0\}\cap Q_r}}
\frac{|\det DT^{-1}(Y)|}{|X-Y|^{n+s}}\,dY
\right)\mathscr{V}(x)\,d\mathscr{H}^{n-1}_X\\
&&\qquad=\lim_{s\nearrow 1}c_{n,s}\int_{{\{\omega\cdot X=0\}}\atop{\{X_1>0\}\cap Q_{r}}}
\left(
\int_{{\{Y_1<0\}}\atop{\{\omega\cdot Y>0\}}}
\frac{|\det DT^{-1}(Y)|}{|X-Y|^{n+s}}\,dY
\right)\mathscr{V}(x)\,d\mathscr{H}^{n-1}_X.
\end{eqnarray*}

Hence, if we use the substitution~$W\coloneqq (Y_1,Y_2-X_2,\dots,Y_n-X_n)=Y-X+X_1e_1$, we see that
\begin{equation*}\begin{split}&\mathrm{I}_1=
\lim_{s\nearrow1}c_{n,s}\int_{\{\omega\cdot X=0\}\cap\{X_1>0\}\cap Q_{r}}
{\mathcal{J}}^1_\ast(X)\mathscr{V}(x)\,d\mathscr{H}^{n-1}_X\\&=
\lim_{s\nearrow1}c_{n,s}\int_{\{\omega\cdot X=0\}\cap\{X_1>0\}\cap Q_{r}}
\left(
\int_{{\{W_1<0\}}\atop{\{\omega\cdot W+X_1\sin\vartheta>0\}}}
\frac{|\det DT^{-1}(W+X-X_1e_1)|}{|W-X_1e_1|^{n+s}}\,dW
\right)\mathscr{V}(x)\,d{\mathscr{H}}^{n-1}_X
.\end{split}\end{equation*}
Accordingly, substituting for~$Z\coloneqq \frac{ W}{X_1}$ and letting~$\omega_0\coloneqq (-\sin\vartheta,\cos\vartheta)$,
\begin{equation*}
\begin{split}
\mathrm{I}_1&=
\lim_{s\nearrow1}c_{n,s}\int_{\{\omega\cdot X=0\}\cap\{X_1>0\}\cap Q_{r}}
\left(
\int_{{\{Z_1<0\}}\atop{\{\omega\cdot Z+\sin\vartheta>0\}}}
\frac{|\det DT^{-1}(X_1Z+X-X_1e_1)|}{|Z-e_1|^{n+s}}\,dZ
\right)\,\frac{\mathscr{V}(x)\,d{\mathscr{H}}^{n-1}_X}{X_1^s}\\
&=
\lim_{s\nearrow1}c_{n,s}\int_{\{\omega_0\cdot \zeta=0\}\cap\{\zeta_1>0\}\cap Q_{r}}
\left(
\int_{{\{Z_1<0\} }\atop{\{\omega\cdot Z+\sin\vartheta>0\}}}
\frac{|\det DT^{-1}(\zeta_1Z+X-\zeta_1e_1)|}{|Z-e_1|^{n+s}}\,dZ
\right)\,\frac{\mathscr{V}(x)\,d{\mathscr{H}}^{n-1}_X}{\zeta_1^s}\\
&=
\lim_{s\nearrow1}c_{n,s}\int_{\{\omega_0\cdot \zeta=0\}\cap\{\zeta_1>0\}\cap Q_{r}}
{\mathcal{G}}_s(\zeta,X'')\mathscr{V}(x)
\,\frac{d{\mathscr{H}}^{1}_\zeta \,d{\mathscr{H}}^{n-2}_{X''}}{\zeta_1^s}\\
&=
\lim_{s\nearrow1}c_{n,s}\int_{\{\zeta_1>0\}\cap\{\zeta_2=\zeta_1\tan\vartheta\}\cap Q_{r}}
{\mathcal{G}}_s(\zeta,X'')\mathscr{V}(x)
\,\frac{d\zeta_1\,d{\mathscr{H}}^{n-2}_{X''}}{\zeta_1^s},
\end{split}\end{equation*}
where we use the intermediate notation~$X=(\zeta_1,X'',\zeta_2)\in\R\times\R^{n-2}\times\R$
and~$\zeta=(\zeta_1,\zeta_2)\in\R^2$ and set
\begin{equation*}
	{\mathcal{G}}_s(\zeta,X'')\coloneqq \int_{{\{Z_1<0\}}\atop{\{\omega\cdot Z+\sin\vartheta>0\}}}
\frac{|\det DT^{-1}(\zeta_1Z+X-\zeta_1e_1)|}{|Z-e_1|^{n+s}}\,dZ.
\end{equation*}

It is now convenient to change variable~$\tau\coloneqq \zeta_1^{1-s}$ to find that
\begin{equation*}\begin{split}&
\mathrm{I}_1=
\lim_{s\nearrow1}\int_{{\{\tau\in(0,r^{1-s})\}\cap\{\zeta_2=\tau^{1/(1-s)}\tan\vartheta\}}\atop{\cap \{|\zeta_2|<r \}\cap\{|X''|_\infty <r\}}}
{\mathcal{G}}_s(\tau^{1/(1-s)},\zeta_2,X'')\mathscr{V}(x)
\,d\tau\,d{\mathscr{H}}^{n-2}_{X''}.
\end{split}\end{equation*}
Since~${\mathcal{G}}_s$ is bounded uniformly in~$s$ (and compactly supported, since so is~${\mathcal{X}}$), we can now use the Dominated Convergence Theorem and conclude that
\begin{equation}\label{ADSAFD:qewrefeg.r:01}\begin{split}
&\mathrm{I}_1=
\int_{\{\tau\in(0,1)\}\cap\{|X''|_\infty <r\}}
{\mathcal{G}}_1(0,0,X'')\mathscr{V}(T^{-1}(0,X'',0))
\,d\tau\,d{\mathscr{H}}^{n-2}_{X''}
\\&=\int_{\{|X''|_\infty <r\}}
{\mathcal{G}}_1(0,0,X'')\mathscr{V}(T^{-1}(0,X'',0))\,d{\mathscr{H}}^{n-2}_{X''}\\&=\int_{\{|X''|_\infty <r\}}
\left(\int_{ {\{Z_1<0\}}\atop{\{\omega\cdot Z+\sin\vartheta>0\}}}
\frac{|\det DT^{-1}(0,X'',0)|}{|Z-e_1|^{n+s}}\,dZ\right)
\frac{{\mathcal{X}}(T^{-1}(0,X'',0))\cdot \nu_{\partial E}(T^{-1}(0,X'',0))}{|\det DT(T^{-1}(0,X'',0))|}\,d{\mathscr{H}}^{n-2}_{X''}
\\&=\int_{\{|X''|_\infty <r\}}
\left(\int_{{\{Z_1<0\}}\atop{\{\omega\cdot Z+\sin\vartheta>0\}}}
\frac{dZ}{|Z-e_1|^{n+s}}\right)
{\mathcal{X}}(T^{-1}(0,X'',0))\cdot \nu_{\partial E}(T^{-1}(0,X'',0))\,d{\mathscr{H}}^{n-2}_{X''}.
\end{split}\end{equation}

Along the same vein, 
\begin{equation}\label{ADSAFD:qewrefeg.r:01BIS}
	\mathrm{I}_2=\int_{\{|X''|_\infty <r\}}
\left(\int_{{\{Z_1<0\}}\atop{\{\omega\cdot Z+\sin\vartheta<0\}}}
\frac{dZ}{|Z-e_1|^{n+s}}\right)
{\mathcal{X}}(T^{-1}(0,X'',0))\cdot \nu_{\partial E}(T^{-1}(0,X'',0))\,d{\mathscr{H}}^{n-2}_{X''}.
\end{equation}

Furthermore, if~$\widetilde\omega:=(\sin\vartheta,0,\dots,0,\cos\vartheta)$ and~$\widetilde Z:=(Z_1,\dots,Z_{n-1},-Z_n)$,
\begin{eqnarray*}&&
\int_{{\{Z_1<0\}}\atop{\{\widetilde\omega\cdot Z-\sin\vartheta>0\}}}
\frac{dZ}{|Z-e_1|^{n+s}}=\int_{{\{Z_1<0\}}\atop{\{Z_1\sin\vartheta+Z_n\cos\vartheta-\sin\vartheta>0\}}}
\frac{dZ}{|Z-e_1|^{n+s}}\\&&\quad=
\int_{{\{\widetilde Z_1<0\}}\atop{\{\widetilde Z_1\sin\vartheta- \widetilde Z_n\cos\vartheta-\sin\vartheta>0\}}}
\frac{d\widetilde Z}{|\widetilde Z-e_1|^{n+s}}=\int_{{\{\widetilde Z_1<0\}}\atop{\{\omega\cdot \widetilde Z+\sin\vartheta<0\}}}
\frac{d\widetilde Z}{|\widetilde Z-e_1|^{n+s}},
\end{eqnarray*}
leading to
\begin{eqnarray*}&&
\int_{{\{Z_1<0\}}\atop{\{\omega\cdot Z+\sin\vartheta>0\}}}
\frac{dZ}{|Z-e_1|^{n+s}}-\int_{{\{Z_1<0\}}\atop{\{\omega\cdot Z+\sin\vartheta<0\}}}
\frac{dZ}{|Z-e_1|^{n+s}}\\&&\quad=\int_{{\{Z_1<0\}}\atop{{\{\omega\cdot Z+\sin\vartheta>0\}}\atop{\{\widetilde\omega\cdot Z-\sin\vartheta<0\}}}}
\frac{dZ}{|Z-e_1|^{n+s}}=\int_{{\{Z_1<0\}}\atop{
\{Z_n\in(
(Z_1-1)\tan\vartheta,(1-Z_1)\tan\vartheta
)\}
}}\frac{dZ}{|Z-e_1|^{n+s}}.
\end{eqnarray*}
Using this, together with~\eqref{ADSAFD:qewrefeg.r:01} and~\eqref{ADSAFD:qewrefeg.r:01BIS}, we find that
\begin{eqnarray*}
&&\mathrm{I}=\int_{\{|X''|_\infty <r\}}
\left(\int_{{\{Z_1<0\}}\atop{
\{Z_n\in(
(Z_1-1)\tan\vartheta,(1-Z_1)\tan\vartheta
)\}
}}\frac{dZ}{|Z-e_1|^{n+s}}\right)\\&&\qquad\qquad\qquad
{\mathcal{X}}(T^{-1}(0,X'',0))\cdot \nu_{\partial E}(T^{-1}(0,X'',0))\,d{\mathscr{H}}^{n-2}_{X''}.
\end{eqnarray*}

We observe that, setting~$\psi:=\frac\pi2-\vartheta$,
$$ \int_{{\{Z_1<0\}}\atop{\{Z_n\in(
(Z_1-1)\tan\vartheta,(1-Z_1)\tan\vartheta)\}
}}\frac{dZ}{|Z-e_1|^{n+s}}=g(\psi),$$
and so
$$ \mathrm{I}=\int_{\{|X''|_\infty <r\}}
g(\psi)
{\mathcal{X}}(T^{-1}(0,X'',0))\cdot \nu_{\partial E}(T^{-1}(0,X'',0))\,d{\mathscr{H}}^{n-2}_{X''}.$$

Now, using the coarea formula on manifolds, see~\cite[Theorem~3.13]{MR1775760}, we get that
\begin{eqnarray*}
\mathrm{I}=
\int_{\partial E\cap\partial\Omega\cap U_r}g(\psi)
{\mathcal{X}}(x'')\cdot \nu_{\partial E}(x'')\big|\det DT\vert_{\partial E\cap\partial\Omega}(x'')\big|d\mathscr{H}^{n-2}_{x''},
\end{eqnarray*}
with the notation~$x''=T^{-1}(0,X'',0)$.

Note that by smoothness of $g$ we have 
\begin{equation*}
	|g(\psi)-g(\psi_{x''})|\leq C|x''|\leq Cr
\end{equation*}
and similarly, since $T$ is the identity in the origin, 
\begin{equation*}
	\big|1-|\det DT\vert_{\partial E\cap\partial\Omega}(x'')|\big|\leq C|x''|\leq Cr.
\end{equation*}
Also, by the regularity of $\partial E\cap\partial\Omega$ we have 
\begin{equation*}
	\mathscr{H}^{n-2}(\partial E\cap\partial\Omega\cap U_r)=O( r^{n-2}).
\end{equation*}
All in all, we obtain that 
\begin{equation}\label{deicniecn}
	\mathrm{I}=\int_{\partial E\cap\partial\Omega\cap U_r}g(\psi_{x''}){\mathcal{X}}(x'')\cdot \nu_{\partial E}(x'')d\mathscr{H}^{n-2}_{x''}+O(r^{n-1}).
\end{equation}

We now take care of the term~$\mathrm{II}$. For this,
we observe that 
\begin{equation}\label{aggrt78thew2wedfc56tyhj}
{\mbox{if~$X_1>0>Y_1$ then~$|X-Y|\ge X_1-Y_1\ge X_1$,}}
\end{equation}
and therefore
$$ \int_{\{Y_1<0\}\cap Q_r}\frac{dY}{|X-Y|^{n+s}}\le
\int_{|X-Y|\ge X_1}\frac{dY}{|X-Y|^{n+s}}\le\frac{C}{sX_1^s}.
$$
{F}rom this and~\eqref{widhiwudeh} we deduce that
\begin{equation*}
|\mathcal{J}_1(X)-\mathcal{J}_\ast(X)|
\leq \frac{Cr}{sX_1^s}.
\end{equation*}
As a result,
\begin{eqnarray*}&&
\left|\int_{\{\omega\cdot X=0\}\cap\{X_1>0\}\cap Q_{r}}\big(\mathcal{J}_1(X)-\mathcal{J}_\ast(X)\big)\mathscr{V}(x)\,d\mathscr{H}^{n-1}_X\right|\\&&\quad\le \frac{Cr}{s}
\int_{\{\omega\cdot X=0\}\cap\{X_1>0\}\cap Q_{r}}\frac{d\mathscr{H}^{n-1}_X}{X_1^s}
\le \frac{Cr^{n-1}}{s}\int_{0}^r\frac{dX_1}{X_1^s}
= \frac{Cr^{n-s}}{s(1-s)}
\end{eqnarray*}  and thus
\begin{equation}\label{deicniecn2}
\mathrm{II}=O(r^{n-1}).
\end{equation}

To estimate $\mathrm{III}$, note that~$|\mathscr{B}_s(X,Y)|\leq C|X-Y|$ and therefore
\begin{align*}
|\mathcal{J}_2(X)|\leq C\int_{\{Y_1<0\}\cap Q_r}\frac{dY}{|X-Y|^{n+s-1}}.
\end{align*}
Recalling also~\eqref{aggrt78thew2wedfc56tyhj} we thereby obtain that
\begin{equation*}
|\mathcal{J}_2(X)|\le C \int_{|X-Y|\ge X_1}
\frac{dY}{|X-Y|^{n+s-1}}\le \frac{C X_1^{1-s}}{1-s}.
\end{equation*}
It follows that 
\begin{align*}&
	\left|c_{n,s}\int_{\{\omega\cdot X=0\}\cap\{X_1>0\}\cap Q_{r}}\mathcal{J}_2(X)\mathscr{V}(x)\,d\mathscr{H}^{n-1}_X\right|\\
	&\qquad\leq
C\int_{\{\omega\cdot X=0\}\cap\{X_1>0\}\cap Q_{r}}
X_1^{1-s}\,d\mathscr{H}^{n-1}_X\\
&\qquad\leq Cr^{1-s}
\mathscr{H}^{n-1}\Big(
\{\omega\cdot X=0\}\cap\{X_1>0\}\cap Q_{r}\Big)\\
&\qquad\leq Cr^{1-s+n-1}\\
&\qquad\leq  Cr^{n-1}
\end{align*}
from which we deduce that 
\begin{equation}\label{deicniecn3}
\mathrm{III}=O(r^{n-1}).
\end{equation}

Gathering together~\eqref{deicniecn}, \eqref{deicniecn2} and~\eqref{deicniecn3}, and recalling~\eqref{vbcnxieowyr4uiw},
we obtain the desired result.
\end{proof}

With~Lemma \ref{lem:ehfwbuebd} we can now
complete the proof of Theorem~\ref{prop: s to 1}.

\begin{proof}[Proof of Theorem \ref{prop: s to 1}]
The proof follows from a covering argument, whose details are as follows.

Given~$r>0$ sufficiently small, we denote by~$T_{r}(\partial E\cap \partial \Omega)$ the $r$-tubular neighbourhood of $\partial E\cap \partial \Omega$ obtained by local diffeomorphisms with~$Q_r$
as described at the beginning of Section~\ref{sec: lim s1}.
More precisely,
we consider a collection of disjoint open sets~$U_{r}^j$, with~$j=1,\dots,N_r$, such that 
	\begin{equation*}
 T_r(\partial E\cap \partial \Omega)=\bigcup_{j=1}^{N_r}U^j_r=:U_r,
	\end{equation*}
up to sets of null measure.
We point out that~$N_r= O(r^{2-n})$. 
	
By Lemma~\ref{lem:ehfwbuebd}, we have that
	\begin{equation}\label{bvcnxmo387448trghusjafgk}\begin{split}
	&	\lim_{s\nearrow 1}\int_{\partial E\cap \Omega^c\cap U_r}\mathcal{A}^s_{E,\Omega}(x) {\mathcal{X}}(x)\cdot \nu_{\partial E}(x)\,d\mathscr{H}^{n-1}_x\\
		&\qquad=\lim_{s\nearrow 1}\sum_{j=1}^{N_r}\int_{\partial E\cap \Omega^c\cap U_r}\mathcal{A}^s_{E,\Omega}(x) {\mathcal{X}}(x)\cdot \nu_{\partial E}(x)\,d\mathscr{H}^{n-1}_x\\
		&\qquad=\sum_{j=1}^{N_r}\int_{\partial E\cap \partial \Omega\cap U_r}g(\psi_{x'})\mathcal{X}(x')\cdot\nu_{\partial E}(x')\, d{\mathscr{H}}^{n-2}_{x'}+O(N_rr^{n-1})\\
		&\qquad=\int_{\partial E\cap \partial \Omega\cap U_r}g(\psi_{x'})\mathcal{X}(x')\cdot\nu_{\partial E}(x')\,d{\mathscr{H}}^{n-2}_{x'}+O(N_rr^{n-1})\\
&\qquad=\int_{\partial E\cap \partial \Omega}g(\psi_{x'})\mathcal{X}(x')\cdot\nu_{\partial E}(x')\,d{\mathscr{H}}^{n-2}_{x'}+O(r).
	\end{split}\end{equation}
	
We also observe that 
	\begin{equation*}
		\lim_{s\nearrow1}
\left|\int_{(\partial E\cap \Omega^c)\setminus U_r}\mathcal{A}^s_{E,\Omega}(x) {\mathcal{X}}(x)\cdot \nu_{\partial E}(x)\,d\mathscr{H}^{n-1}_x\right|\leq \lim_{s\nearrow1}\frac{C\,c_{n,s}}{r^{n+s}}=0.
	\end{equation*}
	
{F}rom this and~\eqref{bvcnxmo387448trghusjafgk}, we infer that
\begin{eqnarray*}
\lim_{s\nearrow 1}\int_{\partial E\cap \Omega^c}\mathcal{A}^s_{E,\Omega}(x) {\mathcal{X}}(x)\cdot \nu_{\partial E}(x)\,d\mathscr{H}^{n-1}_x=\int_{\partial E\cap \partial \Omega}g(\psi_{x'})\mathcal{X}(x')\cdot\nu_{\partial E}(x')\,d{\mathscr{H}}^{n-2}_{x'}+O(r).
\end{eqnarray*}
The desired result now follows by sending~$r\searrow0$.
\end{proof}

\section{Free boundaries without free boundaries and proof of Theorem~\ref{thm: rad symm sFBMS}}\label{sec: FBWFB}
Here we construct an example of free boundary nonlocal minimal surface~$E$ in the unit ball such that~$\partial E\cap \partial\Omega=\varnothing$, proving Theorem~\ref{thm: rad symm sFBMS}.
We begin our construction with the following preliminary result.

\begin{lemma}\label{lem: R*}
	For any~$s\in (0,1)$ there exists~$R_*=R_*(s)>1$ such that
\begin{equation}\label{ytrewqasdsfgclaim1}
\H^s_{B_{R_*}\setminus B_1}(x)=0 \quad{\mbox{ for }} x\in\partial B_1.\end{equation}
	
	Moreover, 
	\begin{equation}\label{ytrewqasdsfgclaim2}
		\lim_{s\searrow 0}{R_*}(s)=+\infty.
	\end{equation}
	
	\begin{proof}
We point out that, by symmetry, the claim in~\eqref{ytrewqasdsfgclaim1}
is established if we show that
\begin{equation*}
\H^s_{B_{R_*}\setminus B_1}(e_1)=0.
\end{equation*}
To check this, we define 
		\begin{equation*}
			f_s(R)\coloneqq \H^s_{B_R\setminus B_1}(e_1)=c_{n,s}\pv\int_{\R^n}\frac{\chi_{(B_R\setminus B_1)^c}(y)-\chi_{B_R\setminus B_1}(y)}{|e_1-y|^{n+s}}dy.
		\end{equation*} 
		
		First, note that~$f$ is continuous in~$(1,+\infty)$. Indeed, for any~$1<R_1\leq R_2$,
		\begin{equation*}
			|f_s(R_1)-f_s(R_2)|=2c_{n,s}\int_{\R^n}\frac{\chi_{B_{R_2}\setminus B_{R_1}}(y)}{|e_1-y|^{n+s}}dy
			\leq \frac{2c_{n,s}\omega_n}{(R_1-1)^{n+s}}(R_2^n-R_1^n)
		\end{equation*}
		from which continuity follows. 
		
		Next, we observe that 
		\begin{equation}\label{mnbvcxz2345678876543}
			\lim_{R\searrow 1}f_s(R)\in(0,+\infty].
		\end{equation}
Indeed,
\begin{eqnarray*}&&\lim_{R\searrow 1}
f_s(R)=\lim_{R\searrow 1}c_{n,s}\pv\int_{\R^n}\frac{\chi_{B_R^c}(y)+\chi_{B_1}(y)-\chi_{B_R\setminus B_1}(y)}{|e_1-y|^{n+s}}dy\\
&&\qquad=c_{n,s}\pv\int_{\R^n}\frac{\chi_{B_1^c}(y)+\chi_{B_1}(y)}{|e_1-y|^{n+s}}dy=c_{n,s}\pv\int_{\R^n}\frac{dy}{|e_1-y|^{n+s}},
\end{eqnarray*}		
which proves~\eqref{mnbvcxz2345678876543}.

Moreover,
\begin{equation}\label{mnbvcxz23456788765432}	
\lim_{R\nearrow+ \infty}f_s(R)<0.\end{equation}
Indeed,
\begin{eqnarray*}
&&\lim_{R\nearrow+ \infty}f_s(R)=
\lim_{R\nearrow+ \infty}c_{n,s}\pv\int_{\R^n}\frac{\chi_{(B_R\setminus B_1)^c}(y)-\chi_{B_R\setminus B_1}(y)}{|e_1-y|^{n+s}}dy\\&&\qquad
=c_{n,s}\pv\int_{\R^n}\frac{\chi_{B_1}(y)-\chi_{\R^n\setminus B_1}(y)}{|e_1-y|^{n+s}}dy=-\H^s_{B_1}(e_1)<0,
\end{eqnarray*}
which is~\eqref{mnbvcxz23456788765432}.

As a consequence of~\eqref{mnbvcxz2345678876543}
and~\eqref{mnbvcxz23456788765432},
by continuity, there must exist~${R_*}={R_*}(s)>1$ such that~$f_s({R_*})=0$, namely~$\H^s_{B_{R_*}\setminus B_1}(e_1)=0$, as desired.

		To prove~\eqref{ytrewqasdsfgclaim2}, we first
		claim that 
		\begin{equation}\label{gbniow4u326rdsafcsqeqazedctg09876543}
		{\mbox{for any~$R>1$ there exists~$s_0\in(0,1)$ such that if~$s\in(0,s_0)$ then~$f_s(R)>0$.}}\end{equation}
For this, we write
		$$	f_s(R)=\mathrm{I}_s+\mathrm{II}_s,$$
		where
		\begin{eqnarray*}
	\mathrm{I}_s	&\coloneqq	&
			c_{n,s}\pv\int_{B_{2R}(e_1)}\frac{\chi_{(B_R\setminus B_1)^c}(y)-\chi_{B_R\setminus B_1}(y)}{|e_1-y|^{n+s}}dy\\
			{\mbox{and }}\quad
\mathrm{II}_s&\coloneqq	&c_{n,s}\int_{B_{2R}^c(e_1)}\frac{dy}{|e_1-y|^{n+s}}.
		\end{eqnarray*}
		
We take~$\lambda\in(0,R-1)$ and we notice that
\begin{equation}\label{mnbvcxz123456789poiuytre}\begin{split}
\left|\int_{B_{2R}(e_1)\setminus B_{\lambda}(e_1)}\frac{\chi_{(B_R\setminus B_1)^c}(y)-\chi_{B_R\setminus B_1}(y)}{|e_1-y|^{n+s}}dy\right|
&\le 2\int_{B_{2R}(e_1)\setminus B_{\lambda}(e_1)}\frac{dy}{|e_1-y|^{n+s}}
\\&=\frac{2\omega_n}{s}\left(\frac1{\lambda^s}-\frac1{(2R)^s}\right).
\end{split}\end{equation}

Moreover, we observe that if~$y\in B_{\lambda}(e_1)$ then
$$ |y|\le |y-e_1|+1<\lambda+1<R.$$
Therefore
\begin{equation*}\begin{split}
\int_{B_{\lambda}(e_1)}\frac{\chi_{(B_R\setminus B_1)^c}(y)-\chi_{B_R\setminus B_1}(y)}{|e_1-y|^{n+s}}dy
&=\int_{B_{\lambda}(e_1)}\frac{\chi_{B_1}(y)-\chi_{B_R\setminus B_1}(y)}{|e_1-y|^{n+s}}dy\\& =
-\int_{B_{\lambda}(e_1)\cap  P_\lambda}
\frac{dy}{|e_1-y|^{n+s}},
\end{split}\end{equation*}
where
$$ P_\lambda\coloneqq \Big\{x=(x_1,x')\in\R^n\;{\mbox{ s.t. }}\; |x'|<\lambda
\;{\mbox{and}}\; |x_1-1|\le \lambda -\sqrt{\lambda^2-|x'|^2}\Big\}.$$
Hence, by~\cite[Lemma~3.1]{MR3516886} we conclude that
$$\left|\int_{B_{\lambda}(e_1)}\frac{\chi_{(B_R\setminus B_1)^c}(y)-\chi_{B_R\setminus B_1}(y)}{|e_1-y|^{n+s}}dy\right|\le 
\frac{C}{(1-s)\lambda^{s}},
$$ for some~$C>0$ depending only on~$n$.

{F}rom this and~\eqref{mnbvcxz123456789poiuytre}, we gather that
\begin{eqnarray*}
&&\left|\int_{B_{2R}(e_1)}\frac{\chi_{(B_R\setminus B_1)^c}(y)-\chi_{B_R\setminus B_1}(y)}{|e_1-y|^{n+s}}dy\right|
\le \frac{2\omega_n}{s}\left(\frac1{\lambda^s}-\frac1{(2R)^s}\right)+\frac{C}{(1-s)\lambda^{s}}.
\end{eqnarray*}
	As a result,
$$	|\mathrm{I}_s|\le  
\frac{2\omega_n c_{n,s}}{s}\left(\frac1{\lambda^s}-\frac1{(2R)^s}\right)+\frac{Cc_{n,s}}{(1-s)\lambda^{s}}.$$
Thus, exploiting the limits in~\eqref{eq: cns to 1},
\begin{equation}\label{vieruohdlsn5tly5ioijbl}
\lim_{s\searrow0}|\mathrm{I}_s|\le\lim_{s\searrow0}
\frac{2\omega_n c_{n,s}}{s}\left(\frac1{\lambda^s}-\frac1{(2R)^s}\right)+\frac{Cc_{n,s}}{(1-s)\lambda^{s}}=0.\end{equation}
		
Furthermore, changing variable~$z\coloneqq y-e_1$,	\begin{eqnarray*}
			\mathrm{II}_s=c_{n,s}\int_{B_{2R}^c}\frac{dz}{|z|^{n+s}}=\frac{c_{n,s}\,\omega_n}{s(2R)^s}
		\end{eqnarray*}
		and therefore, recalling also the first limit in~\eqref{eq: cns to 1},
	$$\lim_{s\searrow0}	\mathrm{II}_s=\lim_{s\searrow0}
	\frac{c_{n,s}\,\omega_n}{s(2R)^s}= 8.
		$$
{F}rom this and~\eqref{vieruohdlsn5tly5ioijbl}, we obtain the desired claim in~\eqref{gbniow4u326rdsafcsqeqazedctg09876543}.	 

Also, we point out that~$f_s$ is a decreasing function in~$(1,+\infty)$.
Consequently, if~$f_s(R)>0$ then~${R_*}(s)>R$.
{F}rom this observation and~\eqref{gbniow4u326rdsafcsqeqazedctg09876543}, we deduce that
for any~$R>1$ there exists~$s_0\in(0,1)$ such that if~$s\in(0,s_0)$ then~${R_*}(s)>R$. This entails~\eqref{ytrewqasdsfgclaim2}
and concludes the proof of Lemma~\ref{lem: R*}.
	\end{proof}
\end{lemma}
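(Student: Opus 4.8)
\emph{Reduction to a scalar problem.} The plan is to fix a boundary point of~$B_1$, recast the statement as a one-variable problem in the outer radius~$R$, and then treat the regime~$s\searrow0$ by a separate asymptotic analysis. By the rotational symmetry of~$B_R\setminus B_1$, the quantity~$\H^s_{B_R\setminus B_1}(x)$ does not depend on~$x\in\partial B_1$, so it suffices to work at~$x=e_1$; I would set
\[
f_s(R)\coloneqq\H^s_{B_R\setminus B_1}(e_1),\qquad R\in(1,+\infty),
\]
and establish~\eqref{ytrewqasdsfgclaim1} via the intermediate value theorem. For this I need three facts: $f_s$ is continuous on~$(1,+\infty)$; $\lim_{R\searrow1}f_s(R)>0$; and $\lim_{R\nearrow+\infty}f_s(R)<0$. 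I would also note that~$f_s$ is strictly decreasing: the numerator $\chi_{(B_R\setminus B_1)^c}-\chi_{B_R\setminus B_1}$ is pointwise non-increasing in~$R$, while the singular structure of the kernel at~$e_1$ (a half-and-half splitting by~$\partial B_1$) is the same for every~$R>1$. This monotonicity makes~$R_*(s)$ unique and will be used again below.

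\emph{The three limits.} For continuity, given $1<R_1\le R_2$ the principal values of $f_s(R_1)$ and $f_s(R_2)$ near~$e_1$ cancel, leaving the regular integral $f_s(R_1)-f_s(R_2)=2c_{n,s}\int_{B_{R_2}\setminus B_{R_1}}|e_1-y|^{-n-s}\,dy$, which is bounded by $C(R_1-1)^{-n-s}(R_2^n-R_1^n)$. As $R\searrow1$ the shell $B_R\setminus B_1$ shrinks to a null set, the numerator increases pointwise to~$1$, and monotone convergence gives $\lim_{R\searrow1}f_s(R)=c_{n,s}\,\pv\int_{\R^n}|e_1-y|^{-n-s}\,dy$, which is strictly positive (in fact $+\infty$, since a positive integrand with a non-integrable singularity offers no cancellation). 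As $R\nearrow+\infty$ the numerator tends a.e. to $\chi_{B_1}-\chi_{B_1^c}$, so $\lim_{R\to\infty}f_s(R)=-\H^s_{B_1}(e_1)$; and $\H^s_{B_1}(e_1)>0$ by comparison with the tangent half-space $\{x_1<1\}$, since $B_1\subsetneq\{x_1<1\}$ forces $\chi_{B_1^c}-\chi_{B_1}\ge\chi_{\{x_1>1\}}-\chi_{\{x_1<1\}}$ with strict inequality on a positive-measure set, while the $\pv$-integral of the right-hand side against $|e_1-y|^{-n-s}$ vanishes by symmetry. The intermediate value theorem then yields $R_*(s)>1$ with $f_s(R_*(s))=0$.

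\emph{The limit $s\searrow0$.} Fix $R>1$; since $f_s$ is decreasing it is enough to prove $f_s(R)>0$ for all small~$s$, as this forces $R_*(s)>R$ and $R$ is arbitrary. I would write $f_s(R)=\mathrm{I}_s+\mathrm{II}_s$, the integrals over $B_{2R}(e_1)$ and over its complement. On the complement the numerator equals $+1$ (if $|y-e_1|\ge2R$ then $|y|\ge2R-1>R$), whence $\mathrm{II}_s=c_{n,s}\omega_n/(s(2R)^s)\to8$ by the first limit in~\eqref{eq: cns to 1}. For $\mathrm{I}_s$, pick $\lambda\in(0,R-1)$ and split $B_{2R}(e_1)$ into the collar $B_\lambda(e_1)$ and the annulus $B_{2R}(e_1)\setminus B_\lambda(e_1)$. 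On the annulus the numerator is bounded by $1$, so that contribution is at most $\frac{c_{n,s}\omega_n}{s}\big(\lambda^{-s}-(2R)^{-s}\big)$, which tends to $0$ because $\frac{1}{s}(\lambda^{-s}-(2R)^{-s})\to\log(2R/\lambda)$ while $c_{n,s}\to0$. On $B_\lambda(e_1)$, since $\lambda<R-1$ this ball lies inside $B_R$, so the numerator there is $\chi_{B_1}-\chi_{B_1^c}$ and the principal-value integral is a ``nonlocal mean curvature of the sphere'' contribution; estimating the lens between $\partial B_1$ and its tangent plane inside $B_\lambda(e_1)$ by~\cite[Lemma~3.1]{MR3516886} bounds it by $C/((1-s)\lambda^s)$, so this contribution is at most $Cc_{n,s}/((1-s)\lambda^s)\to0$. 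Hence $\liminf_{s\searrow0}f_s(R)\ge8>0$, which gives~\eqref{ytrewqasdsfgclaim2}.

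\emph{Main obstacle.} The only genuinely delicate point is bounding $\mathrm{I}_s$ near~$e_1$, that is, showing that the singular principal-value contribution of the collar $B_\lambda(e_1)$ — the term carrying the non-integrable part of the kernel — stays bounded (in fact vanishes) as $s\searrow0$. This is exactly where the sharp estimate on the lens region around~$e_1$ is needed: the bound $C/((1-s)\lambda^s)$ is not small by itself, so it is crucial that it is multiplied by the vanishing prefactor $c_{n,s}$. Everything else is elementary bookkeeping with the constants $c_{n,s}$ and $c_{n,s}/s$ from~\eqref{eq: cns to 1}.
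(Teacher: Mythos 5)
Your argument is correct and follows essentially the same route as the paper: the reduction to $f_s(R)=\H^s_{B_R\setminus B_1}(e_1)$, continuity plus the two limits at $R\searrow1$ and $R\nearrow\infty$ combined with the intermediate value theorem, and then, for $s\searrow0$, the split of $f_s(R)$ into the far contribution tending to $8$ and the near contribution over $B_{2R}(e_1)$, itself split at radius $\lambda$ with the lens estimate from \cite[Lemma~3.1]{MR3516886} killed by the factor $c_{n,s}$, concluding via the monotonicity of $f_s$. The only (harmless) additions are your explicit half-space comparison proving $\H^s_{B_1}(e_1)>0$, which the paper simply asserts, and the early remark on strict monotonicity giving uniqueness of $R_*(s)$.
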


With this preliminary work, we are now ready to complete the proof of Theorem~\ref{thm: rad symm sFBMS}.

\begin{proof}[Proof of Theorem~\ref{thm: rad symm sFBMS}]
	Let~$R_*$ be the radius arising from Lemma~\ref{lem: R*} and note that, by scaling, for any~$r>0$,
\begin{equation}\label{vbcnxmwo98r478trfiuaf3asecWERDGF00}
	\H^s_{B_{R_*r}\setminus B_r}(x)=r^{-s}\H^s_{B_{R_*}\setminus B_1}\left(\frac{x}{r}\right)=0,\quad {\mbox{ for all }} x\in\partial B_r.
\end{equation}

We also claim that there exists~$s_0\in(0,1)$ such that for all~$s\in(0,s_0)$ there exists~$r\in (1/R_*,1)$ such that
\begin{equation}\label{vbcnxmwo98r478trfiuaf3asecWERDGF0}
c_{n,s}\int_{B_1}\frac{\chi_{(B_{R_*r}\setminus B_r)^c}(y)-\chi_{B_{R_*r}\setminus B_r}(y)}{|x-y|^{n+s}}dy=0,
	\quad\quad  {\mbox{for all }} x\in \partial B_{R_*r}.
\end{equation}
Notice that, by symmetry, \eqref{vbcnxmwo98r478trfiuaf3asecWERDGF0}
is established if we show that
\begin{equation}\label{vbcnxmwo98r478trfiuaf3asecWERDGF}
c_{n,s}\int_{B_1}\frac{\chi_{(B_{R_*r}\setminus B_r)^c}(y)-\chi_{B_{R_*r}\setminus B_r}(y)}{|R_*re_1-y|^{n+s}}dy=0.
\end{equation}
 
To check this, we define
\begin{equation*}
	g_s(r)\coloneqq \int_{B_1}\frac{\chi_{(B_{R_*r}\setminus B_r)^c}(y)-\chi_{B_{R_*r}\setminus B_r}(y)}{|R_*re_1-y|^{n+s}}dy=\int_{B_1}\frac{\chi_{B_r}(y)-\chi_{B_{r}^c}(y)}{|R_*re_1-y|^{n+s}}dy
\end{equation*}
and we see that
\begin{equation}\label{bvwjeoir2q235ytwqdgsxcvsr734uifejkd}
\lim_{r\nearrow 1}g_s(r)
=\int_{B_1}\frac{\chi_{B_1}(y)-\chi_{B_{1}^c}(y)}{|R_*re_1-y|^{n+s}}dy=\int_{B_1}\frac{dy}{|R_*re_1-y|^{n+s}}>0.
\end{equation}

Moreover, by~\eqref{ytrewqasdsfgclaim2} in Lemma~\ref{lem: R*}
we have that there exists~$s_0\in(0,1)$ such that if~$s\in(0,s_0)$ then~$R_*>3$. For such values of the parameter~$s$,
we have that
\begin{equation}\label{bvcn238o4723redfiwytrew}
B_{1/R_*}((1-1/R_*)e_1)\subset B_1\setminus B_{1/R_*}.\end{equation}
Indeed, if~$y\in B_{1/R_*}((1-1/R_*)e_1)$ then
$$ |y|\le \left|y-\left(1-\frac1{R_*}\right)e_1 \right|+1-\frac1{R_*}<
\frac1{R^*}+1-\frac1{R_*}=1
$$ and
$$ |y|\ge \left|\left(1-\frac1{R_*}\right)e_1 \right|-\frac1{R_*}=1-\frac{2}{R_*}>\frac{3}{R_*}-\frac{2}{R_*}=\frac{1}{R_*}.
$$
These considerations establish~\eqref{bvcn238o4723redfiwytrew}.

Now, we deduce from~\eqref{bvcn238o4723redfiwytrew} that
\begin{equation*}\begin{split}
\lim_{r\searrow 1/R_*}g_s(r)&
=\int_{B_1}\frac{\chi_{B_{1/R_*}}(y)-\chi_{B_1\setminus B_{1/R_*}}(y)}{|R_*re_1-y|^{n+s}}dy\\&=
\int_{B_1}\frac{\chi_{B_{1/R_*}}(y)-\chi_{B_{1/R_*}((1-1/R_*)e_1)}(y)-\chi_{(B_1\setminus B_{1/R_*})\setminus B_{1/R_*}((1-1/R_*)e_1)}(y)}{|R_*re_1-y|^{n+s}}dy\\&\le
-\int_{B_1}\frac{\chi_{(B_1\setminus B_{1/R_*})\setminus B_{1/R_*}((1-1/R_*)e_1)}(y)}{|R_*re_1-y|^{n+s}}dy
\\&<0.\end{split}\end{equation*}

{F}rom this and~\eqref{bvwjeoir2q235ytwqdgsxcvsr734uifejkd}
we infer the existence of~$r_*\in(1/R_*,1)$ such that~$g_s(r_*)=0$. 
This completes the proof of~\eqref{vbcnxmwo98r478trfiuaf3asecWERDGF}.

Therefore, from~\eqref{vbcnxmwo98r478trfiuaf3asecWERDGF00}
and~\eqref{vbcnxmwo98r478trfiuaf3asecWERDGF0}
we obtain that~$\partial(B_{R_*r_*}\setminus B_{r_*})$ is a free
boundary~$s$-minimal surface in~$B_1$, as desired.
\end{proof} 

	\section{The volume condition and proofs of Theorem~\ref{thm: volume constraint}
	and Corollary~\ref{CPscf:2er3gr}}\label{sec: volume}
	
	Below is the simple, but instructive, proof of Theorem~\ref{thm: volume constraint}.
	
	\begin{proof}[Proof of Theorem~\ref{thm: volume constraint}]
		We point out that~$\partial E\cap\Omega^c$ is unbounded, and we take a sequence~$x_k\in\partial E\cap\Omega^c$ such that~$|x_k|\to+\infty$ as~$k\to +\infty$.
		
		Multiplying the free boundary condition~\eqref{eq: free boundary} by~$|x_k|^{n+s}$, we find that
		\begin{equation*}
			\int_\Omega\frac{|x_k|^{n+s}}{|x_k-y|^{n+s}}\,\big(\chi_{E^c}(y)-\chi_{E}(y)\big)dy=0.
		\end{equation*}
Thus, by the Dominated Convergence Theorem, we obtain that
		\begin{equation*}
			\int_\Omega\big(\chi_{E^c}(y)-\chi_{E}(y)\big)dy=0,
		\end{equation*}
		which concludes the proof.
	\end{proof}
	
	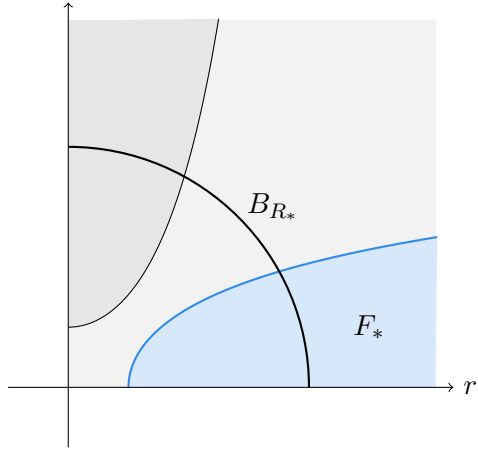
\begin{figure}
\centering
\begin{tikzpicture}[scale=0.8]
  \fill[fill=gray!10] (0,0) -- (6.1,0) -- (6.1,6.1) -- (0,6.1) -- (0,0);
  \fill[fill=gray!20, domain=0:2.5, variable=\x]
    plot ({\x}, {cosh(\x)}) -- (0,6.1) -- cycle;

  \fill[\colorset!20, domain=0:2.5, variable=\y]
    plot ({cosh(\y)}, {\y}) -- (6.1,0) -- cycle;

  \draw[domain=0:2.5, smooth, variable=\x, thin] plot ({\x}, {cosh(\x)});

  \draw[domain=0:2.5, smooth, variable=\y, \colorset, thick]
    plot ({cosh(\y)}, {\y});
    \draw[->] (-1, 0) -- (6.4, 0) node[right] {$r$};
  \draw[->] (0, -1) -- (0, 6.4);
  \fill (5,1) node {$F_*$};
  \fill (3.4,3) node {$B_{R_*}$};
  
  \draw[thick] (4,0) arc[start angle=0, end angle=90, radius=4cm] ;
\end{tikzpicture}
\caption{The catenoid does not split any ball in two parts with equal volume. The volume in the blue part is fully compensated by the volume in the dark grey part, with a positive remainder in light grey.}
\label{fig: catenoid}
\end{figure}

	The work performed so far also allows us to establish
Corollary~\ref{CPscf:2er3gr}.

	\begin{proof}[Proof of Corollary~\ref{CPscf:2er3gr}]
		By the expansion~\eqref{eq: opening C21} it is evident that~$C^{2,1}_s$ does not satisfy the volume condition~\eqref{eq: volume condition} in any ball~$B_R$ when~$s$ is close to~$1$, and therefore
		it cannot be a free boundary~$s$-minimal surface in~$B_R$, thanks to Theorem~\ref{thm: volume constraint}.

We now check that the catenoids~$F_s$ are not
free boundary~$s$-minimal surfaces in any ball~$B_R$ when~$s$ is close to~$1$.

To this aim, we argue by contradiction and suppose that
there exist sequences~$s_k\nearrow 1$ and~$R_k>0$ such that~$F_{s_k}$ is a free boundary~$s_k$-minimal surface in~$B_{R_k}$. 

We point out that
\begin{equation}\label{cbdjnxiuewt7r734rtfugdws0}
{\mbox{$F_{s_k}$ converges locally uniformly to a classical catenoid~$F_*$ as~$k\to+\infty$.}}\end{equation}

Moreover, since~$\partial F_{s_k}$ is unbounded, we are in the position
of applying Theorem~\ref{thm: volume constraint}. In this way, 
we obtain from the volume condition~\eqref{eq: volume condition} that
\begin{equation}\label{bvcxwq57e46udywjxfhsve36wrytdfhsjcyiohlk}
\mathscr{H}^n(F_{s_k}\cap B_{R_k})=\mathscr{H}^n(F_{s_k}^c\cap B_{R_k}).
\end{equation}

We now claim that
\begin{equation}\label{dop3orhfhgd32646etdfgvsiewopgouytrev}
\inf_{k\in{\mathbb{N}}} R_k>0.
\end{equation}
Indeed, suppose by contradiction that, up to a subsequence,
\begin{equation}\label{cbdjnxiuewt7r734rtfugdws2} 
R_k\searrow0\end{equation} and let~$\epsilon>0$ so that
\begin{equation}\label{cbdjnxiuewt7r734rtfugdws}
B_r\cap \bigcup_{x\in\partial F_*} B_{\epsilon}(x)=\varnothing
\end{equation}
for some~$r>0$ (that is, $\epsilon$ is so small that the~$\epsilon$-fattening of the catenoid surface~$\partial F_*$ does not meet the origin).

In light of~\eqref{cbdjnxiuewt7r734rtfugdws0}, for~$k$ sufficiently large, we can also suppose that the distance between~$\partial F_{s_k}\cap B_1$ and~$\partial F_*\cap B_1$
is less than~$\epsilon$. Therefore, by~\eqref{cbdjnxiuewt7r734rtfugdws},
$$ B_r\cap \partial F_{s_k}=\varnothing .$$
Since, by~\eqref{cbdjnxiuewt7r734rtfugdws2}, we know that~$R_k<r$
for large~$k$, we conclude that
$$ B_{R_k}\cap \partial F_{s_k}=\varnothing .$$
As a consequence, one of the sides of~\eqref{bvcxwq57e46udywjxfhsve36wrytdfhsjcyiohlk} is null, and the other strictly positive. This is a contradiction and the proof of~\eqref{dop3orhfhgd32646etdfgvsiewopgouytrev} is thereby complete.

Next, we claim that
\begin{equation}\label{dop3orhfhgd32646etdfgvsiewopgouytrevBIS}
\sup_{k\in{\mathbb{N}}} R_k<+\infty.
\end{equation}
Indeed, suppose by contradiction that~$R_k\to+\infty$. Recall that by the construction in~\cite[Theorem~1]{Davila-delPino-Wei2018}, for~$s$ sufficiently close to~$1$, the fractional catenoid~$F_s$ is the set described as~$\{x=(x',x_3):|x_3|<f(|x'|)\}$, being~$|x'|=\sqrt{x_1^2+x_2^2}$, and 
\begin{equation}\label{eq: expansion fractional catenoid}
		f(r)=\begin{cases}
	\displaystyle	\log(r+\sqrt{r^2-1})+O\left(\frac{r\sqrt{1-s}}{|\log(1-s)|}\right)&\text{if }r<(1-s)^{-1/2},\\
	\displaystyle	r\sqrt{1-s}+O\left(\frac{r\sqrt{1-s}}{|\log(1-s)|}\right)&\text{if }r\geq (1-s)^{-1/2}.
	\end{cases}
\end{equation}

Note that, without loss of generality, we can assume that the catenoid~$F_{s_k}$ is determined exactly by~\eqref{eq: expansion fractional catenoid}
(and not by its rescaling), since otherwise one can take a multiple of~$F_{s_k}$ that is determined by~\eqref{eq: expansion fractional catenoid} and that is a free boundary~$s_k$-minimal surface in a rescaling of~$B_{R_k}$. 

Now, since~$F_{s_k}$ is a free boundary~$s_k$-minimal surface in~$B_{R_k}$, we have that~$\widetilde{F}_{s_k}\coloneqq R_k^{-1}F_{s_k}$ is a free boundary~$s_k$-minimal surface in~$B_1$. Hence, by the expansion~\eqref{eq: expansion fractional catenoid}, we have that~$\widetilde{F}_{s_k}$ is described by~$\{y=(y',y_3)\in B_1:|y_3|<f_k(|y'|)\}$, where 
\begin{equation*}
		f_k(|y'|)=\begin{cases}
	\displaystyle	
	R_{k}^{-1}\log\left(R_k|y'|+\sqrt{R_k^2|y'|^2-1}\right)+O\left(\frac{|y'|\sqrt{1-s_k}}{|\log(1-s_k)|}\right)\\ \qquad\qquad\qquad\qquad\text{if }|y'|<R_k^{-1}(1-s_k)^{-1/2},\\
	\displaystyle 	|y'|\sqrt{1-s_k}+O\left(\frac{|y'|\sqrt{1-s_k}}{|\log(1-s_k)|}\right)
	\\ \qquad\qquad\qquad\qquad
	\text{if }|y'|\geq R_k^{-1}(1-s_k)^{-1/2}.
	\end{cases}
\end{equation*}
Note that~$f_k$ is a sequence of functions converging uniformly to~$0$ in~$B_1'=\{|y'|<1\}$, thus violating~\eqref{bvcxwq57e46udywjxfhsve36wrytdfhsjcyiohlk}.
This contradiction establishes~\eqref{dop3orhfhgd32646etdfgvsiewopgouytrevBIS}.

As a consequence of~\eqref{dop3orhfhgd32646etdfgvsiewopgouytrev} and~\eqref{dop3orhfhgd32646etdfgvsiewopgouytrevBIS}, up to choosing a subsequence, we can assume that~$R_k\to R_*\in(0,+\infty)$.
Then, in light of~\eqref{cbdjnxiuewt7r734rtfugdws0}, we have that~$F_{s_k}$ converges to~$F_*$ in~$B_{R_*}$. This is impossible since~$F_*$ does not satisfy~\eqref{eq: volume condition} in any ball, see Figure~\ref{fig: catenoid}.\end{proof}
	
	\section{Stickiness, boundary regularity and proofs of Theorems~\ref{dj9asofhvkrew8yhmoijyuj}
and~\ref{R-wf0jgvhr90tiuohjtktyu}}\label{sec: stickiness}

This section is devoted to the boundary analysis of free boundary nonlocal minimal surfaces, namely the stickiness statement in
Theorem~\ref{dj9asofhvkrew8yhmoijyuj} and the
boundary behaviour in Theorem~\ref{R-wf0jgvhr90tiuohjtktyu}.

	\begin{proof}[Proof of Theorem~\ref{dj9asofhvkrew8yhmoijyuj}]
	Up to a rigid motion, we can suppose by contradiction that~$E$ sticks to~$\Omega$ from outside at~$0\in\partial E\cap\partial\Omega$. 
	Namely, there exists~$\rho>0$ such that
	\begin{equation}\label{vbncxmowiu4ytr36ewdsqrtfa1qaszx43rfgv}\begin{split}&
{\mbox{either~$\Omega\cap B_\rho\subset  E$ and~$\Omega^c\cap B_\rho\cap E\ne\varnothing$}}\\&
{\mbox{or~$\Omega\cap B_\rho\subset  E^c$ and~$\Omega^c\cap B_\rho\cap E^c\ne\varnothing$.}}\end{split}\end{equation}
In both cases, we have that there exists a sequence of points~$\{x_k\}\subset\partial E\cap \overline\Omega^c$ with~$x_k\searrow0$ and 
		\begin{equation}\label{cbnxklwue8276w1qas4rfv7uj}
			\int_{\Omega}\frac{\chi_{E^c}(y)-\chi_{E}(y)}{|x_k-y|^{n+s}}dy=0.		\end{equation}

Suppose that the first situation in~\eqref{vbncxmowiu4ytr36ewdsqrtfa1qaszx43rfgv} occurs
(the other one being analogous). In this case, we deduce from~\eqref{cbnxklwue8276w1qas4rfv7uj} that
\begin{equation}\label{vbcnxido32y8rtiugdhsj7ri3guwejdb6958746}
			\int_{\Omega\cap B_\rho}\frac{dy}{|x_k-y|^{n+s}}=\int_{\Omega\cap B_\rho}\frac{\chi_{E}(y)-\chi_{E^c}(y)}{|x_k-y|^{n+s}}dy=\int_{\Omega\cap B_\rho^c}\frac{\chi_{E^c}(y)-\chi_{E}(y)}{|x_k-y|^{n+s}}dy.
		\end{equation}
		
Moreover, for~$k$ large enough, we have that~$x_k\in B_{\rho/2}$. Therefore, if~$y\in\Omega\cap B_\rho^c$,
$$ |x_k-y|\ge |y|-|x_k|\ge |y|-\frac{|y|}2=\frac{|y|}2.$$
As a consequence,
$$\left| \int_{\Omega\cap B_\rho^c}\frac{\chi_{E^c}(y)-\chi_{E}(y)}{|x_k-y|^{n+s}}dy\right|\le 2\int_{\Omega\cap B_\rho^c}\frac{dy}{|x_k-y|^{n+s}}\le 2^{n+s+1}\int_{\Omega\cap B_\rho^c}\frac{dy}{|y|^{n+s}}\le C,
 $$ for some~$C>0$ independent of~$k$.

{F}rom this and~\eqref{vbcnxido32y8rtiugdhsj7ri3guwejdb6958746}
we infer that, for~$k$ large enough,
$$ \int_{\Omega\cap B_\rho}\frac{dy}{|x_k-y|^{n+s}}\le C.$$
Then, by Fatou's Lemma,
\begin{equation}\label{vbcnxido32y8rtiugdhsj7ri3guwejdb6958746BIS}
			\int_{\Omega\cap B_\rho}\frac{dy}{|y|^{n+s}}\leq \lim_{k\to +\infty}\int_{\Omega\cap B_\rho}\frac{dy}{|x_k-y|^{n+s}}\leq C.
		\end{equation}
		
On the other hand, we have that
$$ \int_{\Omega\cap B_\rho}\frac{dy}{|y|^{n+s}}=+\infty,$$
in contradiction with~\eqref{vbcnxido32y8rtiugdhsj7ri3guwejdb6958746BIS}.
\end{proof}

Before proving Theorem~\ref{R-wf0jgvhr90tiuohjtktyu}, we need some preliminary statements.

We first show that corners produce an infinite nonlocal mean curvature.
Some care is needed for a statement of this type, because of course symmetric
grids may have vanishing mean curvature.
Also, in our setting the nonlocal mean curvature is not computed exactly
at the corner, but only arbitrarily close to it, and this produces some technical
issues in the integral calculations.

The result that we need goes as follows:

\begin{lemma}\label{sdfertyu:qwdf} Let~$E\subset\R^n$, with~$0\in\partial E$.
Let~$\alpha\in(s,1]$ and~$T:\R^n\to\R^n$ be a diffeomorphism of class~$C^{1,\alpha}$
with~$T(0)=0$, $ DT(0)=\operatorname{Id}$, and
such that~$T(B_r)=B_r$ and~$T(E\cap B_r)=E_1\cup E_2$, where
\begin{eqnarray*}
&& E_1\coloneqq \big\{ {\mbox{$x\in B_r$ s.t. $x_1\ge0$ and $\omega_1\cdot x<0$}}\big\}\\{\mbox{and }}  &&
E_2\coloneqq \big\{ {\mbox{$x\in B_r$ s.t. $x_1\le0$ and $\omega_2\cdot x<0$}}\big\},
\end{eqnarray*}
for some unit vectors~$\omega_1$, $\omega_2\in\R^n$ (see Figure \ref{fig:E1E2}).

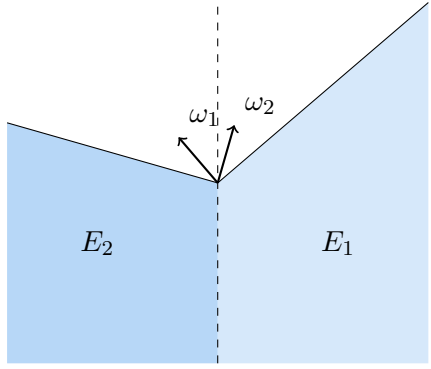
\begin{figure}
	\centering
	\begin{tikzpicture}[scale=.8]
	
		\coordinate (O) at (0,0);
		\coordinate (E1) at (3.5,3);
		\coordinate (BR1) at (3.5,-3);
		\coordinate (E2) at (-3.5,1);
		\coordinate (BR2) at (-3.5,-3);
		\coordinate (BR) at (0,-3);

		\fill[fill=\colorset!20] (O) to (E1) to (BR1) to (BR) to (O);
		
		\fill[fill=\colorset!35] (O) to (E2) to (BR2) to (BR) to (O);
		\draw[black,dashed] (0,-3) to (0,3);
		
		\draw (O) to (E1);
		\draw (O) to (E2);
		
		\fill (2,-1) node {$E_1$};
		\fill (-2,-1) node {$E_2$};
		
		\draw[thick, ->] (O) -- ++(130.6:1cm) node[above right] {$\omega_1$};
		\draw[thick, ->] (O) -- ++(254-180:1cm) node[above right] {$\omega_2$};
		
	\end{tikzpicture}
	\caption{The sets $E_1$ and $E_2$ from Lemma \ref{sdfertyu:qwdf}}
	\label{fig:E1E2}
\end{figure}

Suppose that\footnote{Up to complementary sets, Lemma~\ref{sdfertyu:qwdf}
has a similar statement in which condition~\eqref{sdfertyu:qwdf4}
is replaced by \label{02eorfjgvbg45h54nb}
$$\big\{ {\mbox{$x\in B_r$ s.t. $x_1\ge0$ and $\omega_2\cdot x<0$}}\big\}\subset E_1$$
and the corresponding thesis in~\eqref{sdfertyu:qwdfY} becomes$$ \lim_{T(\partial E_1)\ni x\to0}\int_{\R^n}\frac{\chi_{E^c}(y)-\chi_E(y)}{|x-y|^{n+s}}\,dy
=+\infty.$$}
\begin{equation}\label{sdfertyu:qwdf4} \big\{ {\mbox{$x\in B_r$ s.t. $x_1\le0$ and $\omega_1\cdot x<0$}}\big\}\subset E_2.\end{equation}

Then, either~$\omega_1=\omega_2$ or
\begin{equation}\label{sdfertyu:qwdfY} \lim_{T(\partial E_1)\ni x\to0}\int_{\R^n}\frac{\chi_{E^c}(y)-\chi_E(y)}{|x-y|^{n+s}}\,dy
=-\infty.\end{equation}
\end{lemma}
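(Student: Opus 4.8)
The plan is to compute the nonlocal mean curvature at points $x$ approaching $0$ along $T(\partial E_1)$, and to show that (unless $\omega_1 = \omega_2$) the contribution of the ``corner region'' near $0$ dominates and has a definite sign. Since $T$ is $C^{1,\alpha}$ with $T(0)=0$ and $DT(0)=\operatorname{Id}$, the diffeomorphism is, to first order, the identity near the origin, and its quadratic correction contributes only a lower-order error (an $O(|x-y|^{1-s-\alpha})$-type improvement in the integrand, integrable near the diagonal because $\alpha > s$), so it suffices to carry out the computation as if $E = E_1 \cup E_2$ were exactly the union of the two halved half-spaces and as if $x$ tends to $0$ along $\partial E_1 \cap \{x_1 > 0\}$. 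I would write $x = \tau e_1 + (\text{small})$ with $\tau \searrow 0$, use the scaling $y = \tau Y$ as in Lemma~\ref{lem:ehfwbuebd}, and split $\R^n = B_M(x) \cup B_M(x)^c$ for a large fixed $M$; the far region contributes $O(\tau^{-s}\cdot \tau^{s}) = O(1)$ after multiplying back, hence is negligible in the limit $\tau \to 0$ only if the near region blows up — which is exactly what I must show.

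**The blow-up computation.** In the rescaled picture, the set $\tau^{-1}(E_1 \cup E_2)$ converges, as $\tau \searrow 0$, to a limiting configuration: since $x/\tau \to e_1 \in \{x_1 > 0\}$, and on the side $\{x_1 > 0\}$ the set $E$ agrees with $\{\omega_1 \cdot x < 0\}$ near the corner while on $\{x_1 < 0\}$ the hypothesis~\eqref{sdfertyu:qwdf4} forces $E^c \cap \{x_1 < 0\} \subset \{\omega_1 \cdot x \ge 0\}$, i.e. $E \supset \{x_1 < 0,\ \omega_1 \cdot x < 0\}$ but also $E \subset \{x_1 \le 0,\ \omega_2 \cdot x < 0\}$ on that side. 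The key point is that when $\omega_1 \ne \omega_2$, the ``wedge'' $W := \{x_1 < 0\} \cap (\{\omega_1 \cdot x < 0\} \triangle \{\omega_2 \cdot x < 0\})$ has positive density at the origin, and on $W$ the set $E$ behaves like $E_2 = \{\omega_2 \cdot x < 0\}$ rather than like the ``symmetric completion'' $\{\omega_1 \cdot x < 0\}$. Writing the integrand $\chi_{E^c}(y) - \chi_E(y)$ as the sum of the ``symmetric'' part (the nonlocal curvature of the single halfspace $\{\omega_1 \cdot x < 0\}$, which vanishes at $x \in \partial\{\omega_1\cdot x<0\}$ by oddness) plus the correction supported on $W$, one gets that the curvature at $x$ equals, up to $o(\tau^{-s})$,
\begin{equation*}
\tau^{-s}\int_{\tau^{-1}W}\big(\chi_{E^c}-\chi_E\big)(Y)\,\frac{dY}{|e_1 - Y|^{n+s}} + o(\tau^{-s}),
\end{equation*}
and the wedge $\tau^{-1}W$ opens up to a fixed infinite wedge $W_\infty$ with vertex at $0$ on which the sign of $\chi_{E^c} - \chi_E$ is constant and negative (this is where~\eqref{sdfertyu:qwdf4} enters: it pins down which of the two competing halfspaces ``wins'' on $\{x_1<0\}$, forcing the correction to count as extra ``$E$'', i.e. as $-1$). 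The integral $\int_{W_\infty} |e_1 - Y|^{-n-s}\,dY$ over this fixed solid wedge with vertex at the origin \emph{diverges} because of the non-integrable singularity at $Y = 0$ (the vertex $0$ lies on $\partial W_\infty$ and $|e_1 - Y|^{-n-s}$ is bounded there — wait, so actually the divergence must come from elsewhere). Let me restate: the divergence comes not from the vertex but from the fact that, after multiplying by $\tau^{-s}$ and letting $\tau \to 0$, the wedge $\tau^{-1}W$ grows to fill a region of infinite measure near $Y = e_1$ only if... The cleanest route is actually to observe directly, without rescaling, that
\begin{equation*}
\int_{W \cap B_\rho} \frac{dy}{|x-y|^{n+s}} \ge c\int_{W\cap B_\rho}\frac{dy}{|y|^{n+s}} = +\infty
\end{equation*}
uniformly for $x$ in a neighbourhood of $0$ with $|x| \le \tfrac12 \operatorname{dist}(y,0)$-type control, exactly as in the proof of Theorem~\ref{dj9asofhvkrew8yhmoijyuj}: the wedge $W$ is a genuine $n$-dimensional cone with vertex at $0$, so $\int_{W\cap B_\rho}|y|^{-n-s}\,dy$ diverges, and the correction term to the curvature is $-$ (this divergent quantity) plus bounded terms, giving $-\infty$.

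**Carrying it out and the main obstacle.** Concretely I would: (1) decompose $\chi_{E^c} - \chi_E = (\chi_{\{\omega_1\cdot y \ge 0\}} - \chi_{\{\omega_1 \cdot y < 0\}}) + 2\big(\chi_{\{\omega_1\cdot y<0\}} - \chi_E\big)$ and note that $\{\omega_1 \cdot y < 0\} \setminus E \supseteq W$ with $W$ a solid cone of positive opening (using $\omega_1 \ne \omega_2$ and~\eqref{sdfertyu:qwdf4}), while $E \setminus \{\omega_1\cdot y<0\}$ has bounded contribution because on $\{x_1 > 0\}$ the two agree and on $\{x_1 < 0\}$ any extra mass is also contained in a cone — but of the \emph{opposite} sign, so one must check it does not cancel $W$; this is handled by~\eqref{sdfertyu:qwdf4}, which precisely says the ``extra'' is all on the $E$-side; (2) bound the principal-value integral of the odd kernel against the first (antisymmetric) term by the $C^{1,\alpha}$ regularity, getting a quantity that stays bounded as $x \to 0$ along $\partial E_1$ (this is where $\alpha > s$ is used, via the standard estimate that a $C^{1,\alpha}$ graph has locally bounded nonlocal mean curvature — precisely \cite[Proposition~6.3]{Figalli-Fusco-Maggi-Millot-Morini2015} type bounds, robust near the corner since we stay on the regular part $\partial E_1$); (3) conclude $\int_{\R^n} \frac{\chi_{E^c} - \chi_E}{|x-y|^{n+s}}\,dy \le C - c\int_{W\cap B_\rho}\frac{dy}{|x-y|^{n+s}} \to -\infty$ via Fatou's lemma exactly as in the proof of Theorem~\ref{dj9asofhvkrew8yhmoijyuj}. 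The main obstacle is step (2): making rigorous that the antisymmetric part — which is the nonlocal mean curvature of a $C^{1,\alpha}$ hypersurface ($T(\partial E_1)$, viewed as perturbation of the halfspace $\{\omega_1\cdot x<0\}$) evaluated at a point tending to the corner $0$ — remains bounded, even though $0$ itself is a singular point of $\partial E$. The resolution is that $\partial E_1$ (the relevant one-sided piece) is genuinely $C^{1,\alpha}$ up to and including $0$ after straightening by $T$, so the point $x$, while approaching the global corner, always sits on a uniformly $C^{1,\alpha}$ sheet, and the cancellation in the principal value is controlled uniformly; the diffeomorphism error from $T$ is absorbed here too, again using $\alpha > s$.
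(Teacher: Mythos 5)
Your overall strategy coincides with the paper's: compare $E$ with its half-space completion across the corner, show the ``regular'' part of the curvature stays bounded using $\alpha>s$, and observe that when $\omega_1\neq\omega_2$ hypothesis \eqref{sdfertyu:qwdf4} forces the discrepancy to be a solid cone $F$ with vertex at the origin contained in $E$, whose contribution drives the curvature to $-\infty$ (your Fatou argument, borrowed from the proof of Theorem~\ref{dj9asofhvkrew8yhmoijyuj}, gives the divergence without the quantitative rate $c|X|^{-s}$ that the paper gets by scaling; either suffices). However, step (2) contains a genuine gap as written. You decompose in the \emph{original} coordinates against the exact half-space $G=\{\omega_1\cdot y<0\}$ and claim that $\pv\int(\chi_{G^c}-\chi_G)(y)|x-y|^{-n-s}dy$ remains bounded as $x\to0$ along the sheet. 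But the points $x$ in the limit lie on the curved $C^{1,\alpha}$ surface $T^{-1}(\{\omega_1\cdot X=0\}\cap B_r)$, which is tangent to, and in general not contained in, the hyperplane $\{\omega_1\cdot y=0\}$; and the principal value of the half-space term at any point \emph{off} its boundary is $\pm\infty$ (near such an $x$ the integrand is identically $\pm1$, so no cancellation occurs in the deleted balls). The FFMMM-type bound you invoke controls the curvature of the curved sheet at its own points, not the flat half-space evaluated at points of the sheet, so your decomposition splits a finite quantity into two infinite ones. Repairing this is exactly the technical content of the paper's proof: one straightens by $T$ first, so that $X=T(x)$ lies exactly on $\{\omega_1\cdot X=0\}$ and the flat term vanishes in $B_r$ by reflection symmetry, and then one controls (i) the change-of-variables error via $|T^{-1}(X)-T^{-1}(Y)-(X-Y)|\le C|X-Y|^{1+\alpha}$, which gains a factor $|X-Y|^{\alpha}$ in the kernel (integrable since $\alpha>s$, cf.~\eqref{o2kwfvch5iEhi-yy}), and (ii) the Jacobian $|\det DT^{-1}|$, which breaks the exact symmetry and is handled through its $C^{0,\alpha}$ continuity (cf.~\eqref{q0owjf:012efvn82ijkwdf0123em9w85g3i37}). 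Equivalently you could compare with the curved completion $T^{-1}(\{\omega_1\cdot X<0\}\cap B_r)$ instead of the flat $G$, but then both the boundedness of its curvature at $x$ and the fact that the discrepancy still contains a solid cone require estimates of precisely this kind; asserting that the diffeomorphism error is ``absorbed'' is not yet a proof.

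Two smaller points. In step (1) the inclusion is stated backwards: \eqref{sdfertyu:qwdf4} gives $E\setminus\{\omega_1\cdot y<0\}\supseteq W$ (the wedge lies \emph{inside} $E$), not $\{\omega_1\cdot y<0\}\setminus E\supseteq W$; your earlier blow-up discussion and your final sign (correction equal to $-2$ on the wedge, hence limit $-\infty$, matching the paper's $-2\chi_F$) are correct, so this is an internal inconsistency rather than a wrong conclusion, but as written it contradicts the sentence that follows it. Also, the displayed inequality $\int_{W\cap B_\rho}|x-y|^{-n-s}dy\ge c\int_{W\cap B_\rho}|y|^{-n-s}dy=+\infty$ cannot hold for a fixed $x\neq0$ (the left-hand side is finite); what you need, and what Fatou does give, is $\liminf_{x\to0}\int_{W\cap B_\rho}|x-y|^{-n-s}dy=+\infty$, and this has the virtue of being insensitive to the direction of approach, which your normalisation ``$x=\tau e_1+$ small'' is not, since $x$ may tend to $0$ along any direction of the half-hyperplane $\{\omega_1\cdot X=0,\ X_1\ge0\}$.
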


\begin{proof} 
We point out that, for all~$x$, $y\in B_r$,
\begin{equation*}
\big| T(x)-T(y)-(x-y)\big|=
\left| \int_0^1\Big( DT\big(tx+(1-t)y\big)-{\operatorname{Id}}\Big)(x-y)\,dt
\right|\le
C|x-y|^{1+\alpha},
\end{equation*}for some~$C>0$,
and, in a similar vein, up to freely renaming~$C$,
\begin{equation}\label{q123ert78sdfkojbgl0}
\big| T^{-1}(X)-T^{-1}(Y)-(X-Y)\big|\le
C|X-Y|^{1+\alpha}.
\end{equation} 

We also claim that, for all~$a$, $b\ge0$ and all~$\gamma\ge1$,
\begin{equation}\label{q123ert78sdfkojbgl}
|a^\gamma-b^\gamma|\le\gamma(a+b)^{\gamma-1}|a-b|.
\end{equation}
To check this, without loss of generality, we can assume that~$a>0$ and~$ b>0$, otherwise the result is obvious, and, up to swapping~$a$ and~$b$, that~$a\ge b$.
Then, we let~$c\coloneqq a-b\ge0$ and we find that
\begin{eqnarray*}
&&|a^\gamma-b^\gamma|=(b+c)^\gamma-b^\gamma=
\gamma\int_0^c(b+t)^{\gamma-1}\,dt\le
\gamma (b+c)^{\gamma-1}c\\&&\qquad=
\gamma a^{\gamma-1}(a-b)\le\gamma(a+b)^{\gamma-1}|a-b|,
\end{eqnarray*}
which establishes~\eqref{q123ert78sdfkojbgl}.

By~\eqref{q123ert78sdfkojbgl0} and~\eqref{q123ert78sdfkojbgl}, used here with~$a\coloneqq |X-Y|$, $b\coloneqq |T^{-1}(X)-T^{-1}(Y)|$ and~$\gamma\coloneqq n+s$, we gather that
\begin{equation}\label{8h612wed012wedv203-pgjbko}
\begin{split}&
\Big||X-Y|^{n+s}-|T^{-1}(X)-T^{-1}(Y)|^{n+s}\Big|\\ \le\;&
(n+s) \Big(
|X-Y|+|T^{-1}(X)-T^{-1}(Y)|
\Big)^{n+s-1}\Big||X-Y|-|T^{-1}(X)-T^{-1}(Y)|\Big|\\  \le\;&
C|X-Y|^{n+s-1}\Big||X-Y|-|T^{-1}(X)-T^{-1}(Y)|\Big|\\ \le\;&
C|X-Y|^{n+s-1}\big|(X-Y)-(T^{-1}(X)-T^{-1}(Y))\big|\\ \le\;&
C|X-Y|^{n+s+\alpha}\\ \le\;&
C|T^{-1}(X)-T^{-1}(Y)|^{n+s}|X-Y|^\alpha.
\end{split}
\end{equation}

Now, given~$x\in B_r$, we use the notation~$X\coloneqq T(x)$ and
$$\Phi(X,Y)\coloneqq \frac{|X-Y|^{n+s}}{|T^{-1}(X)-T^{-1}(Y)|^{n+s}}.$$
It follows from~\eqref{8h612wed012wedv203-pgjbko} that
\begin{equation*}
|\Phi(X,Y)-1|=\left| \frac{|X-Y|^{n+s}-|T^{-1}(X)-T^{-1}(Y)|^{n+s}}{|T^{-1}(X)-T^{-1}(Y)|^{n+s}}\right|
\le C |X-Y|^{\alpha}.
\end{equation*}

Hence, if~$E_*\coloneqq E_1\cup E_2$, we find that, for all~$x\in B_r$,
\begin{equation*}
\begin{split}
\Xi(x)&\coloneqq \int_{B_r}\frac{\chi_{E^c}(y)-\chi_E(y)}{|x-y|^{n+s}}\,dy-
\int_{B_r}\Big(\chi_{E_*^c}(Y)-\chi_{E_*}(Y)\Big)\frac{|\det DT^{-1}(Y)|}{|X-Y|^{n+s}}\,dY\\&
=\int_{B_r}\Big( \chi_{E_*^c}(Y)-\chi_{E_*}(Y)\Big)
\frac{|\det DT^{-1}(Y)|\,dY}{|T^{-1}(X)-T^{-1}(Y)|^{n+s}}\\&\qquad\qquad-
\int_{B_r}\Big(\chi_{E_*^c}(Y)-\chi_{E_*}(Y)\Big)\frac{|\det DT^{-1}(Y)|}{|X-Y|^{n+s}}\,dY
\\&=\int_{B_r}\frac{\chi_{E_*^c}(Y)-\chi_{E_*}(Y)}{|X-Y|^{n+s}}\,\Big(\Phi(X,Y)-1\Big)|\det DT^{-1}(Y)|\,dY
\end{split}
\end{equation*}
and consequently
\begin{equation}\label{o2kwfvch5iEhi-yy}
\begin{split}
|\Xi(x)|\le C\int_{B_r}\frac{|\Phi(X,Y)-1|}{|X-Y|^{n+s}}\,dY\le
C\int_{B_r}\frac{|X-Y|^\alpha}{|X-Y|^{n+s}}\,dY\le Cr^{\alpha-s}.
\end{split}
\end{equation}

We let
$$ G\coloneqq \big\{ {\mbox{$x\in\R^n$ s.t. $\omega_1\cdot x<0$}}\big\}$$
and we claim that
\begin{equation}\label{q0owjf:012efvn82ijkwdf0123em9w85g3i37}\left|
\int_{B_r}\Big(\chi_{G^c}(Y)-\chi_{G}(Y)\Big)\frac{|\det DT^{-1}(Y)|}{|X-Y|^{n+s}}\,dY\right|\le Cr^{\alpha-s}.
\end{equation}
To check this, we observe that, by symmetry,
$$ \int_{B_r}\Big(\chi_{G^c}(Y)-\chi_{G}(Y)\Big)\frac{dY}{|X-Y|^{n+s}}=0$$
and accordingly
\begin{eqnarray*}
&&\left|\int_{B_r}\Big(\chi_{G^c}(Y)-\chi_{G}(Y)\Big)\frac{|\det DT^{-1}(Y)|}{|X-Y|^{n+s}}\,dY\right|\\&&\qquad=
\left|\int_{B_r}\Big(\chi_{G^c}(Y)-\chi_{G}(Y)\Big)\frac{|\det DT^{-1}(Y)|-|\det DT^{-1}(X)|}{|X-Y|^{n+s}}\,dY\right|\\&&\qquad\le
\int_{B_r}\frac{\Big||\det DT^{-1}(Y)|-|\det DT^{-1}(X)|\Big|}{|X-Y|^{n+s}}\,dY\\&&\qquad\le C\int_{B_r}\frac{dY}{|X-Y|^{n+s-\alpha}}\\&&\qquad\le Cr^{\alpha-s}
\end{eqnarray*}
and this gives~\eqref{q0owjf:012efvn82ijkwdf0123em9w85g3i37}.

Now suppose that~$\omega_1\ne\omega_2$. Then, the cone
\begin{eqnarray*}&&F\coloneqq \big\{ {\mbox{$x\in B_r$ s.t. $x_1\le0$ and $\omega_2\cdot x<0\le\omega_1\cdot x$}}\big\}\end{eqnarray*}
has positive measure and therefore, up to taking~$r$ smaller if needed,
changing variables~$Z\coloneqq \frac{Y}{|X|}$, and setting~$\widehat X\coloneqq \frac{X}{|X|}$,
\begin{equation*}
\int_{F\cap B_r}\frac{|\det DT^{-1}(Y)|}{|X-Y|^{n+s}}\,dY\ge
c\int_{F\cap B_r}\frac{dY}{|X-Y|^{n+s}}=
\frac{c}{|X|^s}\int_{B_{r/|X|}\cap F}\frac{dZ}{|\widehat X-Z|^{n+s}}\ge\frac{c}{|X|^s},
\end{equation*}
as long as~$|X|$ is small enough (possibly with respect to~$r$), and up to renaming~$c>0$.

{F}rom this, \eqref{o2kwfvch5iEhi-yy}
and~\eqref{q0owjf:012efvn82ijkwdf0123em9w85g3i37}, writing~$E_*=(F\cup G)\cap B_r$,
we deduce that
\begin{equation}\label{9qerf:COqwfdvbngh3W3d56g7E3DF}
\begin{split}&
\int_{B_r}\frac{\chi_{E^c}(y)-\chi_E(y)}{|x-y|^{n+s}}\,dy\\=\;&
\Xi(x)+
\int_{B_r}\Big(\chi_{E_*^c}(Y)-\chi_{E_*}(Y)\Big)\frac{|\det DT^{-1}(Y)|}{|X-Y|^{n+s}}\,dY\\ =\;&\Xi(x)+
\int_{B_r}\Big(\chi_{G_*^c}(Y)-\chi_{G_*}(Y)\Big)\frac{|\det DT^{-1}(Y)|}{|X-Y|^{n+s}}\,dY-2\int_{F\cap B_r}\frac{|\det DT^{-1}(Y)|}{|X-Y|^{n+s}}\,dY\\ 
\le\;& Cr^{\alpha-s}-\frac{c}{|X|^s}.
\end{split}\end{equation}

Also, if~$|x|\le\frac{r}2$,
\begin{eqnarray*}
\left|\int_{B_r^c}\frac{\chi_{E^c}(y)-\chi_E(y)}{|x-y|^{n+s}}\,dy\right|\le
\int_{B_r^c}\frac{dy}{|x-y|^{n+s}}\le C\int_{B_r^c}\frac{dy}{|y|^{n+s}}\le
\frac{C}{r^s}.
\end{eqnarray*}

Combining this and~\eqref{9qerf:COqwfdvbngh3W3d56g7E3DF}, we conclude that
\begin{eqnarray*}
&&\lim_{T(\partial E_1)\ni x\to0}\int_{\R^n}\frac{\chi_{E^c}(y)-\chi_E(y)}{|x-y|^{n+s}}\,dy=
\lim_{E_1\ni X=T(x)\to0}\int_{\R^n}\frac{\chi_{E^c}(y)-\chi_E(y)}{|x-y|^{n+s}}\,dy
\\&&\qquad\le
\lim_{E_1\ni X=T(x)\to0}
\frac{C}{r^s}+
Cr^{\alpha-s}-\frac{c}{|X|^s}=-\infty,
\end{eqnarray*}as desired.
\end{proof}

Below is a useful variation of Lemma~\ref{sdfertyu:qwdf} estimating integral contributions in~$\Omega$:

\begin{figure}
	\begin{center}
		\begin{tikzpicture}[scale=.8]
			
			\coordinate (SW) at (0,-3);
		\coordinate (SE) at (4,-3);
		\coordinate (NW) at (0,3);
		\coordinate (NE) at (4,3);
		\coordinate (O) at (0,0);
		\coordinate (P) at (4,2);
		\coordinate (P2) at (-4,-2);
		\coordinate (S) at (-4,-3);

		\fill[fill=\colorset!10] (P2) to (P) to (SE) to (S) to (P2);
		
		\fill[fill=gray!20, fill opacity=0.3] (SW) to (SE) to (NE) to (NW) to (SW);
		
		\draw[black] (SW) to (NW);

		\draw[black] (P) to (P2);

		\fill (-2,-2) node {$E$};
		\fill (2,2.3) node {$\Omega$};
		
		\draw[thick, ->] (O) -- ++(116.6:1cm) node[above] {$\omega_1$};
		\draw[thick, ->] (O) -- ++(116.6+90:1cm) node[above] {$\varpi$};
			
		\end{tikzpicture}
	\end{center}
	\caption{The setting of Lemma \ref{osjdlcvORmJUfbgb}}
	\label{fig:omega and varpi}
\end{figure}
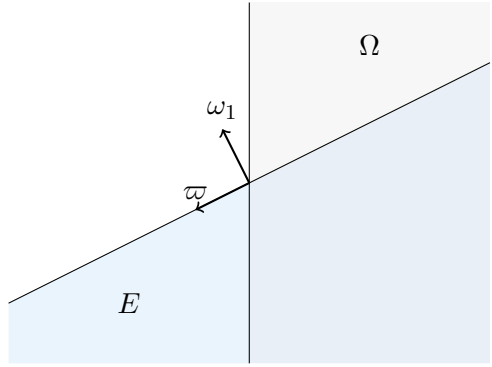

\begin{lemma}\label{osjdlcvORmJUfbgb}
Let $E$ and $\Omega$ be as above and let
\begin{equation}\label{aojsdcknv9012or3jthgbn823b4bVSNIdkf}
\vartheta\in\left(-\frac\pi2,\frac\pi2\right).\end{equation}
Assume that~$0\in\partial E\cap\partial \Omega$ and that there exist~$r>0$ and a diffeomorphism~$T:\R^n\to\R^n$ of class~$C^{1,\alpha}$,
with~$T(0)=0$, $ DT(0)=\operatorname{Id}$,
$T(B_r)=B_r$, $T(B_r\cap\Omega)=B_r\cap\{x_1>0\}$
and
\begin{equation}\label{019qowdjfknVJSmdcKowrefgr02er8}
T(E\cap B_r)=\big\{ {\mbox{$x\in B_r$ s.t. $\omega\cdot x<0$}}\big\}\end{equation}
for some unit vector~$\omega=(-\sin\vartheta,0\dots,0,\cos\vartheta)$.

Let~$\rho_k\in\left(0,\frac{r}2\right)$ be an infinitesimal sequence and~$\varpi\coloneqq (-\cos\vartheta,0,\dots,0,-\sin\vartheta)$ (see Figure \ref{fig:omega and varpi}). Let also~$x_k\coloneqq T^{-1}(\rho_k\varpi)$.

Then, $x_k$ is infinitesimal as~$k\to+\infty$. Also, for large~$k$,
\begin{equation}\label{9CVqGBueowdfhkgv095pyoujm:203er}
x_k\in(\partial E)\cap\overline{\Omega}^c.
\end{equation}

Moreover, if~$\vartheta\in\left(0,\frac\pi2\right)$, then
\begin{equation}\label{m5cfpa6g823lscgokdmc204r-1}
\lim_{k\to+\infty}\int_\Omega\frac{\chi_{E^c}(y)-\chi_E(y)}{|x_k-y|^{n+s}}\,dy=-\infty.\end{equation}

Similarly, if~$\vartheta\in\left(-\frac\pi2,0\right)$, then
\begin{equation}\label{m5cfpa6g823lscgokdmc204r-2}\lim_{k\to+\infty}\int_\Omega\frac{\chi_{E^c}(y)-\chi_E(y)}{|x_k-y|^{n+s}}\,dy=+\infty.\end{equation}\end{lemma}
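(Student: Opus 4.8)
The plan is to first dispatch the two ``easy'' claims and then concentrate all effort on the divergence statements~\eqref{m5cfpa6g823lscgokdmc204r-1}--\eqref{m5cfpa6g823lscgokdmc204r-2}, which are the real content. Since $T(0)=0$ and $T$ is continuous, $T^{-1}$ is continuous at $0$, so $x_k=T^{-1}(\rho_k\varpi)\to T^{-1}(0)=0$; this gives the first assertion. For~\eqref{9CVqGBueowdfhkgv095pyoujm:203er}, note $\rho_k\varpi$ has first coordinate $-\rho_k\cos\vartheta<0$ (using $\vartheta\in(-\tfrac\pi2,\tfrac\pi2)$, so $\cos\vartheta>0$), hence $\rho_k\varpi\in B_r\cap\{x_1<0\}=T(B_r\cap\Omega^c)$, which places $x_k$ in $\overline\Omega^c$; moreover $\omega\cdot(\rho_k\varpi)=\rho_k(\sin\vartheta\cos\vartheta-\cos\vartheta\sin\vartheta)=0$, so $\rho_k\varpi\in T(\partial E\cap B_r)$ by~\eqref{019qowdjfknVJSmdcKowrefgr02er8}, i.e.\ $x_k\in\partial E$. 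This proves~\eqref{9CVqGBueowdfhkgv095pyoujm:203er}.

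For the divergence, the strategy mirrors the proof of Lemma~\ref{sdfertyu:qwdf}, but now the integration is over $\Omega$ only and the relevant excess region sits against $\partial\Omega$. First I would straighten: by the change of variables $y=T^{-1}(Y)$, $x_k=T^{-1}(X_k)$ with $X_k=\rho_k\varpi$, write
$$\int_\Omega\frac{\chi_{E^c}(y)-\chi_E(y)}{|x_k-y|^{n+s}}\,dy=\int_{\{Y_1>0\}\cap B_r}\big(\chi_{\{\omega\cdot Y\ge0\}}-\chi_{\{\omega\cdot Y<0\}}\big)(Y)\,\frac{|\det DT^{-1}(Y)|}{|T^{-1}(X_k)-T^{-1}(Y)|^{n+s}}\,dY + O(1),$$
where the $O(1)$ collects the contribution of $\Omega\setminus B_r$ (bounded by $\int_{B_r^c}|y|^{-n-s}dy\le Cr^{-s}$, uniformly in $k$, since $|x_k|\le r/2$ eventually). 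Then, exactly as in~\eqref{8h612wed012wedv203-pgjbko}--\eqref{o2kwfvch5iEhi-yy}, replace $|T^{-1}(X)-T^{-1}(Y)|^{n+s}$ by $|X-Y|^{n+s}$ at the cost of an error $\le C\int_{B_r}|X-Y|^{\alpha-n-s}dY\le Cr^{\alpha-s}$, which is harmless. So up to $O(r^{\alpha-s})+O(r^{-s})$ bounded terms — uniform in $k$ — the integral equals
$$\int_{\{Y_1>0\}\cap B_r}\big(\chi_{\{\omega\cdot Y\ge0\}}-\chi_{\{\omega\cdot Y<0\}}\big)(Y)\,\frac{|\det DT^{-1}(Y)|}{|X_k-Y|^{n+s}}\,dY.$$
Now I would compare the set $\{Y_1>0,\ \omega\cdot Y<0\}$ (which is ``$E$'' on the $\Omega$-side) with the half-space $\{Y_1>0\}$: when $\vartheta>0$, the vector $\omega=(-\sin\vartheta,0,\dots,0,\cos\vartheta)$ tilts the plane $\{\omega\cdot Y=0\}$ so that the wedge $W\coloneqq\{Y_1>0,\ \omega\cdot Y\ge0\}$ (the part of the $\Omega$-side lying in $E^c$) is nonempty and has an opening bounded below. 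Writing $\{Y_1>0\}\cap B_r = \big(\{Y_1>0,\omega\cdot Y<0\}\cup W\big)\cap B_r$ and using that $\int_{\{Y_1>0\}\cap B_r}(\cdots)\frac{|\det DT^{-1}(Y)|}{|X_k-Y|^{n+s}}dY$ is bounded by essentially the same argument as~\eqref{q0owjf:012efvn82ijkwdf0123em9w85g3i37} — because on the half-space $\{Y_1>0\}$ a constant density gives a finite integral against $|X_k-Y|^{-n-s}$ when $X_k$ sits on $\{Y_1=0\}$, and $|\det DT^{-1}|$ differs from its value at $X_k$ by $O(|X-Y|^\alpha)$ — we are reduced to showing
$$\int_{W\cap B_r}\frac{|\det DT^{-1}(Y)|}{|X_k-Y|^{n+s}}\,dY\ \longrightarrow\ +\infty,$$
since this wedge enters with coefficient $-2$ (it lies in $E^c$, which carries $+\chi$, versus the half-space computation which effectively assigned it $-\chi$; tracking signs gives $\chi_{E^c}-\chi_E = (\text{half-space contribution})-2\chi_W$, hence a net $-2\int_W$, and after negation... ) — here I must be careful with signs: the claim is that the genuine integral tends to $-\infty$, so the wedge $W\subset E^c$ contributes $+\int_W\to+\infty$ with the \emph{right} sign only if... let me instead phrase it as: the difference between the true integrand and the (bounded) half-space model is $-2\chi_W$ times $|X_k-Y|^{-n-s}$ when $\vartheta>0$, forcing the expression to $-\infty$. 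The divergence $\int_{W\cap B_r}|X_k-Y|^{-n-s}dY\to+\infty$ follows from the scaling argument of Lemma~\ref{sdfertyu:qwdf}: since $|X_k|=\rho_k\to0$ and $X_k$ lies on $\partial W$ with $W$ a fixed cone of positive opening, substitute $Z=Y/|X_k|$ to get $|X_k|^{-s}\int_{W\cap B_{r/|X_k|}}|\widehat X_k-Z|^{-n-s}dZ\ge c|X_k|^{-s}\to+\infty$, using that $\widehat X_k\in\partial W$ stays on the unit sphere so the integral over $W\cap B_1$ is bounded below by a positive constant.

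For $\vartheta\in(-\tfrac\pi2,0)$ the roles of $E$ and $E^c$ on the $\Omega$-side are interchanged: now the wedge $\{Y_1>0,\ \omega\cdot Y<0\}\setminus\{Y_1>0\}$-analysis produces the excess region inside $E$, giving $+\infty$ in~\eqref{m5cfpa6g823lscgokdmc204r-2}; this can also be obtained from~\eqref{m5cfpa6g823lscgokdmc204r-1} by applying it to the complementary set $E^c$ (which swaps $\chi_E$ and $\chi_{E^c}$ and replaces $\omega$ by $-\omega$, hence $\vartheta$ by $\vartheta\pm\pi$... cleaner to just redo the wedge computation with the opposite sign). \textbf{The main obstacle} I anticipate is bookkeeping the signs and verifying that the ``half-space model'' integral $\int_{\{Y_1>0\}\cap B_r}(\chi-\chi)\,|X_k-Y|^{-n-s}dY$ is genuinely bounded uniformly in $k$: unlike the full-space symmetry used in~\eqref{q0owjf:012efvn82ijkwdf0123em9w85g3i37}, here there is no exact cancellation, so one must argue that with $X_k$ on the boundary plane $\{Y_1=0\}$ the odd (in $\omega\cdot Y$) integrand over the half-space $\{Y_1>0\}$ still integrates to something finite — this uses that $|X_k-Y|\ge Y_1$ is \emph{not} available (since $X_{k,1}=0$), so instead one exploits the symmetry $Y\mapsto$ reflection across $\{\omega\cdot Y=0\}$ which maps $\{Y_1>0\}$ to itself up to a set of the same measure only when $\vartheta=0$, and otherwise produces a controlled defect of size $O(|X_k|^{\alpha-s})$ or a genuine logarithmic/bounded term that must be separated out before taking $k\to\infty$. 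Handling this defect carefully — peeling it off and absorbing it into the $O(1)$ — is where the argument needs the most attention.
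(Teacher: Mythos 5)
Your treatment of the two preliminary claims (that $x_k\to0$ and~\eqref{9CVqGBueowdfhkgv095pyoujm:203er}) is correct and matches the paper: $\omega\cdot\varpi=0$ places $\rho_k\varpi$ on $T(\partial E\cap B_r)$, and $e_1\cdot\varpi=-\cos\vartheta<0$ places it in $T(\overline\Omega^c\cap B_r)$. The straightening step (replacing $|T^{-1}(X_k)-T^{-1}(Y)|^{-n-s}$ by $|X_k-Y|^{-n-s}$ at a cost $O(r^{\alpha-s})$, and discarding $\Omega\setminus B_r$ as $O(r^{-s})$) is also sound.

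The divergence argument, however, has a genuine gap, and you have located it yourself without resolving it: the ``half-space model'' term is \emph{not} bounded uniformly in $k$. Your discussion of this point rests on the false premise that $X_k$ lies on the plane $\{Y_1=0\}$; in fact $X_{k,1}=-\rho_k\cos\vartheta<0$, so $X_k$ sits at distance $\rho_k\cos\vartheta$ from $\{Y_1>0\}$ and consequently $\int_{\{Y_1>0\}\cap B_r}|X_k-Y|^{-n-s}\,dY\asymp\rho_k^{-s}$. Thus in the decomposition ``true $=$ model $+\,2\chi_W$'' (note also the sign: the correction enters with $+2$, not $-2$, since $W\subset E^c$ while the model assigns $-1$ on all of $\{Y_1>0\}$) \emph{both} competing terms blow up at the same rate $\rho_k^{-s}$, and the sign of the total is decided by a cancellation between them — it cannot be read off from the wedge term alone, and it is precisely here that the dependence on the sign of $\vartheta$ must enter (the wedge $W=\{Y_1>0,\ \omega\cdot Y\ge0\}$ is nonempty and of fixed opening for every $\vartheta\in(-\frac\pi2,\frac\pi2)$, so its mere divergence cannot distinguish \eqref{m5cfpa6g823lscgokdmc204r-1} from \eqref{m5cfpa6g823lscgokdmc204r-2}). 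The paper settles this by multiplying by $\rho_k^s$, rescaling $Z=Y/\rho_k$ as in~\eqref{owdjfwrrgh034tugjhb-3rgrhrbnHJiwefhkv}, and passing to the limit by dominated convergence — each limiting integral in~\eqref{lsL94kdpSrgh023r0itgjbMB:1} is finite exactly because $\varpi$ is at distance $\cos\vartheta>0$ from $\{Z_1>0\}$ — and then exploiting the reflection $W=Z-2(\omega\cdot Z)\omega$ across the hyperplane $\{\omega\cdot Z=0\}$, which preserves $|\varpi-Z|$ since $\omega\cdot\varpi=0$; this identifies the limit as $-\int_{\{W_1\in(0,2(\omega\cdot W)\omega_1]\}\cap\{\omega\cdot W<0\}}|\varpi-W|^{-n-s}\,dW$, a strictly negative quantity when $\vartheta>0$ (and the mirror computation gives the positive sign for $\vartheta<0$), whence $\Upsilon_k\sim\rho_k^{-s}\cdot(\text{negative constant})\to-\infty$. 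Your proposal gestures at a reflection, but across the wrong configuration (you reflect the domain $\{Y_1>0\}$ with $X_k$ assumed on $\{Y_1=0\}$), and the final sign in your sketch comes out right only through two compensating missteps; to repair the proof you would need to replace the ``bounded model $+$ divergent wedge'' scheme by an actual computation of the rescaled limit, as the paper does.
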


\begin{proof} We remark that~$\omega\cdot\varpi=0$ and therefore, by~\eqref{019qowdjfknVJSmdcKowrefgr02er8}, for large~$k$ we have that
\begin{equation}\label{9CVqGBueowdfhkgv095pyoujm:203erb}T(x_k)=\rho_k\varpi\in\big\{ {\mbox{$x\in B_r$ s.t. $\omega\cdot x=0$}}\big\}=
T(\partial E\cap B_r).\end{equation}

Furthermore, if~$e_1\coloneqq (1,0,\dots,0)$, we see that~$e_1\cdot\varpi=
-\cos\vartheta<0$, owing to~\eqref{aojsdcknv9012or3jthgbn823b4bVSNIdkf}.
Accordingly, for large~$k$, we have that
$$ T(x_k)=\rho_k\varpi\in\big\{ {\mbox{$x\in B_r$ s.t. $e_1\cdot x<0$}}\big\}=
T(\overline\Omega^c\cap B_r).
$$
{F}rom this and~\eqref{9CVqGBueowdfhkgv095pyoujm:203erb}, we obtain~\eqref{9CVqGBueowdfhkgv095pyoujm:203er}, as desired.

Now we use the notation~$Y\coloneqq T(y)$ and we compute that
\begin{equation*}\begin{split}
\Upsilon_k &\coloneqq \int_{B_r\cap\Omega}
\frac{\chi_{E^c}(y)-\chi_E(y)}{|x_k-y|^{n+s}}\,dy
\\& =\int_{B_r \cap \{Y_1>0\} \cap \{\omega\cdot Y>0\} }
\frac{|\det DT^{-1}(Y)|\,dY}{|T^{-1}(\rho_k\varpi)-T^{-1}(Y)|^{n+s}}\\&\qquad\quad-
\int_{B_r \cap \{Y_1>0\} \cap \{\omega\cdot Y<0\} }
\frac{|\det DT^{-1}(Y)|\,dY }{ |T^{-1}(\rho_k\varpi)-T^{-1}(Y)|^{n+s}}.
\end{split}
\end{equation*}
Substituting for~$Z\coloneqq \frac{Y}{\rho_k}$, we conclude that
\begin{equation}\label{owdjfwrrgh034tugjhb-3rgrhrbnHJiwefhkv}\begin{split}
\rho_k^s\Upsilon_k
&=\int_{ B_{r/\rho_k} \cap \{Z_1>0\} \cap \{\omega\cdot Z>0\} }
\frac{|\det DT^{-1}(\rho_k Z)|\,dZ}{|\rho_k^{1}T^{-1}(\rho_k\varpi)-\rho_k^{-1}T^{-1}(\rho_k Z)|^{n+s}}\\
&\qquad-
\int_{ B_{r/\rho_k} \cap \{Z_1>0\} \cap \{\omega\cdot Z<0\} }
\frac{|\det DT^{-1}(\rho_k Z)|\,dZ}{|\rho_k^{-1}T^{-1}(\rho_k\varpi)-\rho_k^{-1}T^{-1}(\rho_k Z)|^{n+s}}=:\Xi_k.
\end{split}
\end{equation}

We stress that
$$ \rho_k^{-1}T^{-1}(\rho_k Z)=
\rho^{-1}_k \Big( \rho_k DT^{-1}(0) Z+O(\rho_k^{1+\alpha})\Big)=Z+O(\rho_k^\alpha),$$
as well as~$\rho_k^{-1}T^{-1}(\rho_k \varpi)=\varpi+O(\rho_k^\alpha)$.

Hence, recalling~\eqref{aojsdcknv9012or3jthgbn823b4bVSNIdkf},
$$ \lim_{k\to+\infty}
e_1\rho_k^{-1}T^{-1}(\rho_k \varpi)=e_1\cdot\varpi=-\cos\vartheta$$
and thus, for large~$k$, we have that~$e_1\rho_k^{-1}T^{-1}(\rho_k \varpi)\le-\frac{\cos\vartheta}2<0$.

This allows us to use the Dominated Convergence Theorem and deduce that
\begin{equation}\label{lsL94kdpSrgh023r0itgjbMB:1}
\lim_{k\to+\infty}\Xi_k=
\int_{ \{Z_1>0\} \cap \{\omega\cdot Z>0\} }
\frac{dZ}{|\varpi-Z|^{n+s}}-
\int_{ \{Z_1>0\} \cap \{\omega\cdot Z<0\} }
\frac{dZ}{|\varpi-Z|^{n+s}}.\end{equation}

We now consider the reflection~${\mathcal{R}}$ through the hyperplane normal to~$\omega$, namely
$$W\coloneqq {\mathcal{R}}(Z)=Z-2(\omega\cdot Z)\omega.$$
Thus, since~$Z=W-2(\omega\cdot W)\omega$ and~$\omega\cdot\varpi=0$,
\begin{eqnarray*}&&
|\varpi-Z|^2=|(\varpi-W)+2(\omega\cdot W)\omega|^2\\&&\quad=
|\varpi-W|^2+4(\omega\cdot W)^2+4(\omega\cdot W)(\varpi-W)\cdot\omega
=|\varpi-W|^2.\end{eqnarray*}
On this account,
\begin{equation}\label{lsL94kdpSrgh023r0itgjbMB:2}\Lambda\coloneqq 
\int_{ \{Z_1>0\} \cap \{\omega\cdot Z>0\} }
\frac{dZ}{|\varpi-Z|^{n+s}}
=\int_{ \{W_1>2(\omega\cdot W)\omega_1\} \cap \{\omega\cdot W<0\} }
\frac{dW}{|\varpi-W|^{n+s}}.
\end{equation}

Suppose now that~$\vartheta\in\left(0,\frac\pi2\right)$. Then, $\omega_1<0$. Hence, 
$$ \{W_1>2(\omega\cdot W)\omega_1\} \cap \{\omega\cdot W<0\}\subset
\{W_1>0\}.$$
It follows from this observation, \eqref{lsL94kdpSrgh023r0itgjbMB:1} and~\eqref{lsL94kdpSrgh023r0itgjbMB:2} that
\begin{eqnarray*}\lim_{k\to+\infty}\Xi_k&=&\Lambda-
\int_{ \{Z_1>0\} \cap \{\omega\cdot Z<0\} }
\frac{dZ}{|\varpi-Z|^{n+s}}\\&=&-
\int_{ \big\{W_1\in\big(0,2(\omega\cdot W)\omega_1\big]\big\} \cap \{\omega\cdot W<0\} }
\frac{dW}{|\varpi-W|^{n+s}},
\end{eqnarray*}
which is a strictly negative quantity.

This and~\eqref{owdjfwrrgh034tugjhb-3rgrhrbnHJiwefhkv} yield that
\begin{equation}\label{4fGt6hodjf4c619loo3402rfg}
\lim_{k\to+\infty}\Upsilon_k=-\infty.\end{equation}

Also, since~$x_k\in B_{r/2}$,
$$ \left|\int_{B_r^c\cap\Omega}\frac{\chi_{E^c}(y)-\chi_E(y)}{|x_k-y|^{n+s}}\,dy
\right|\le \int_{B_r^c}\frac{dy}{|x_k-y|^{n+s}}\,dy\le C
\int_{B_r^c}\frac{dy}{|y|^{n+s}}\,dy\le\frac{C}{r^s}.$$
The proof of~\eqref{m5cfpa6g823lscgokdmc204r-1} is now completed, thanks to the latter observation and~\eqref{4fGt6hodjf4c619loo3402rfg}. 

The proof of~\eqref{m5cfpa6g823lscgokdmc204r-2} is alike.
\end{proof}

We now recall an easy observation regarding smooth functions:

\begin{lemma}\label{CC1}
Let~$B$ be a ball in~$\R^N$, centered at the origin.
Let~$\varphi_1$, $\varphi_2\in C^{1,\alpha}(B)$, for some~$\alpha\in(0,1]$.

Let
\begin{equation}\label{vasder}\varphi(x_1,\dots,x_N)\coloneqq \begin{dcases}
\varphi_1(x_1,\dots,x_N)&{\mbox{ if $x_1\ge0$,}}\\
\varphi_2(x_1,\dots,x_N)&{\mbox{ if $x_1<0$.}}
\end{dcases}\end{equation}

Assume that, for all~$(0,\dots,x_{N-1},x_{N})\in B$,
\begin{equation}\label{COqwdv} \varphi_1(0,\dots,x_{N-1},x_{N})=\varphi_2(0,\dots,x_{N-1},x_{N})\end{equation}
and, for all~$j\in\{1,\dots,N\}$,
\begin{equation}\label{COqwdv2}\partial_j\varphi_1(0,\dots,x_{N-1},x_{N})=\partial_j\varphi_2(0,\dots,x_{N-1},x_{N}).\end{equation}

Then, $\varphi\in C^{1,\alpha}(B)$.
\end{lemma}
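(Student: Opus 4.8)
The plan is to verify directly that $\varphi$ defined by \eqref{vasder} is continuously differentiable across the hyperplane $\{x_1=0\}$, with H\"older-continuous gradient. First I would observe that away from $\{x_1=0\}$ there is nothing to prove: on $\{x_1>0\}$ and on $\{x_1<0\}$ the function $\varphi$ coincides with $\varphi_1$, respectively $\varphi_2$, which are $C^{1,\alpha}$ by hypothesis, so both continuity and differentiability there are immediate. Continuity of $\varphi$ on all of $B$ then follows from the matching condition \eqref{COqwdv}, which guarantees that the two definitions agree on $\{x_1=0\}$.

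\medskip

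The core of the argument is to show that $\varphi$ is differentiable at points $p=(0,x_2,\dots,x_N)\in B$ and that the derivative is the common value
$$\nabla\varphi(p)=\nabla\varphi_1(p)=\nabla\varphi_2(p),$$
which is well defined thanks to \eqref{COqwdv} and \eqref{COqwdv2}. To prove differentiability at such $p$ with this candidate derivative $L\coloneqq\nabla\varphi_1(p)$, I would estimate $|\varphi(x)-\varphi(p)-L\cdot(x-p)|$ for $x$ near $p$: if $x_1\ge 0$ this equals $|\varphi_1(x)-\varphi_1(p)-L\cdot(x-p)|\le C|x-p|^{1+\alpha}$ by the $C^{1,\alpha}$ bound on $\varphi_1$, and symmetrically for $x_1<0$ using $\varphi_2$ together with $L=\nabla\varphi_2(p)$. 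Thus $\varphi$ is differentiable everywhere in $B$, with $\nabla\varphi$ given by $\nabla\varphi_1$ on $\{x_1\ge0\}$ and $\nabla\varphi_2$ on $\{x_1\le0\}$ (the two agreeing on the overlap).

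\medskip

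Finally I would check that $\nabla\varphi$ is $\alpha$-H\"older on $B$. Given two points $x,y\in B$: if they lie on the same side of $\{x_1=0\}$, the estimate is the $C^{1,\alpha}$ bound on the relevant $\varphi_i$. If they lie on opposite sides, say $x_1\ge 0\ge y_1$, I would introduce the point $z$ where the segment $[x,y]$ crosses $\{x_1=0\}$ (which lies in $B$ by convexity), write
$$|\nabla\varphi(x)-\nabla\varphi(y)|\le |\nabla\varphi_1(x)-\nabla\varphi_1(z)|+|\nabla\varphi_2(z)-\nabla\varphi_2(y)|\le C\big(|x-z|^\alpha+|z-y|^\alpha\big),$$
using $\nabla\varphi_1(z)=\nabla\varphi_2(z)$, and then bound $|x-z|,|z-y|\le|x-y|$ since $z$ is between $x$ and $y$. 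This yields $[\nabla\varphi]_{C^{0,\alpha}(B)}<\infty$, hence $\varphi\in C^{1,\alpha}(B)$, as claimed. I do not anticipate a serious obstacle here; the only point requiring a little care is making sure the crossing point $z$ stays inside $B$ and that one uses the matching of \emph{all} first-order derivatives \eqref{COqwdv2}, not merely the tangential ones, at the gluing interface — but since $\varphi_1$ and $\varphi_2$ are each defined on the full ball $B$, their normal derivatives along $\{x_1=0\}$ are meaningful and \eqref{COqwdv2} with $j=1$ supplies exactly the needed matching.
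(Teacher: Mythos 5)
Your proposal is correct and follows essentially the same route as the paper: continuity from \eqref{COqwdv}, differentiability across $\{x_1=0\}$ with the common gradient supplied by \eqref{COqwdv2}, and the H\"older estimate for opposite-side points via the crossing point $z$ on the segment. The only cosmetic difference is that you prove differentiability at the interface through the quantitative $C^{1,\alpha}$ Taylor bound $|x-p|^{1+\alpha}$, while the paper only invokes first-order differentiability of $\varphi_1,\varphi_2$ (an $o(h)$ expansion); both are fine.
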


\begin{proof} We stress that~$\varphi$ is continuous in~$B$, thanks to~\eqref{COqwdv}.

We also have that~$\varphi$ is differentiable in~$B$, with
\begin{equation}\label{qowdfjvlb42}
\partial_j\varphi(x_1,\dots,x_{N})=\begin{dcases}\partial_j
\varphi_1(x_1,\dots,x_{N})&{\mbox{ if $x_1\ge0$,}}\\
\partial_j\varphi_2(x_1,\dots,x_{N})&{\mbox{ if $x_1<0$.}}
\end{dcases}
\end{equation}
This follows from~\eqref{vasder} when~$x_1\ne0$, hence we focus on the case~$x_1=0$.

For this, we use~\eqref{COqwdv2} and we see that, as~$h=(h_1,\dots,h_{N})\to0$,
\begin{eqnarray*}&&
\varphi(h_1,x_2+h_2,\dots,x_{N}+h_{N})-\varphi(0,x_2,\dots,x_{N})\\&&\quad=
\begin{dcases}
\varphi_1(h_1,x_2+h_2,\dots,x_{N}+h_{N})-\varphi_1(0,x_2,\dots,x_{N})&{\mbox{ if }}h_1>0,\\
\varphi_2(h_1,x_2+h_2,\dots,x_{N}+h_N)-\varphi_2(0,x_2,\dots,x_{N})&{\mbox{ if }}h_1<0,\end{dcases}\\&&\quad=
\begin{dcases}
\nabla\varphi_1(0,x_2,\dots,x_{N})\cdot h+o(h)
&{\mbox{ if }}h_1>0,\\
\nabla\varphi_2(0,x_2,\dots,x_{N})\cdot h+o(h)
&{\mbox{ if }}h_1<0,
\end{dcases}\\&&\quad=\nabla\varphi_1(0,x_2,\dots,x_{N})\cdot h+o(h),
\end{eqnarray*} from which the proof of~\eqref{qowdfjvlb42} follows.

Now, to complete the proof of~\eqref{CC1}, we check that,
for all~$x=(x_1,\dots,x_N)$ and~$y=(y_1,\dots,y_N)$ in~$B$,
with~$x\ne y$,
\begin{equation}\label{q0wdfoeujvbf}
\frac{|\nabla\varphi(x)-\nabla\varphi(y)|}{|x-y|^\alpha}\le C
\max\left\{
\|\varphi_1\|_{C^{1,\alpha}(B)},\,\|\varphi_2\|_{C^{1,\alpha}(B)}
\right\},
\end{equation}
for some~$C\ge1$.

When~$x_1>0$ and~$y_1>0$, as well as when~$x_1<0$ and~$y_1<0$,
the claim in~\eqref{q0wdfoeujvbf} is a direct consequence of~\eqref{qowdfjvlb42}
and the regularity assumption on~$\varphi_1$ and~$\varphi_2$.

Hence, we can restrict to the case in which~$x_1>0>y_1$. In this case, we pick~$z=(0,z_2,\dots,z_N)$ in the segment joining~$x$ to~$y$ and we remark that~$|x-y|=|x-z|+|z-y|$. Therefore, by~\eqref{qowdfjvlb42},
\begin{eqnarray*}&&
|\nabla\varphi(x)-\nabla\varphi(y)|\le
|\nabla\varphi(x)-\nabla\varphi(z)|+
|\nabla\varphi(z)-\nabla\varphi(y)|\\&&\qquad=|\nabla\varphi_1(x)-\nabla\varphi_1(z)|+
|\nabla\varphi_2(z)-\nabla\varphi_2(y)|\\&&\qquad\le
\|\varphi_1\|_{C^{1,\alpha}(B)}|x-z|^\alpha+
\|\varphi_2\|_{C^{1,\alpha}(B)}|z-y|^\alpha\\&&\qquad\le
\max\left\{
\|\varphi_1\|_{C^{1,\alpha}(B)},\,\|\varphi_2\|_{C^{1,\alpha}(B)}
\right\}\;\left(
|x-z|^\alpha+|z-y|^\alpha
\right)\\&&\qquad\le\max\left\{
\|\varphi_1\|_{C^{1,\alpha}(B)},\,\|\varphi_2\|_{C^{1,\alpha}(B)}
\right\}\;\left(
|x-y|^\alpha+|x-y|^\alpha
\right),
\end{eqnarray*}
and~\eqref{q0wdfoeujvbf} plainly follows.
\end{proof}

With the preliminary work done so far we can now complete the proof of Theorem~\ref{R-wf0jgvhr90tiuohjtktyu}.
	
\begin{proof}[Proof of Theorem~\ref{R-wf0jgvhr90tiuohjtktyu}]
Given~$x_1$, $x_n\in\R$, we let
$$\left(-\frac\pi2,\frac\pi2\right)\ni\vartheta\longmapsto f(\vartheta)\coloneqq 
-x_1\tan\vartheta + x_n$$
and we observe that
\begin{equation}\label{qojdwfvl:ERfgb0-ZSipo}
{\mbox{$f$ is nondecreasing when~$x_1\le0$ and nonincreasing when~$x_1\ge0$.}}
\end{equation}

We claim that when~$\vartheta_1>\vartheta_2$ we have that
\begin{equation}\label{02eorfjgvbg45h54}
\big\{ {\mbox{$x\in B_r$ s.t. $x_1\le0$ and $\omega_1\cdot x<0$}}\big\}\subset \big\{ {\mbox{$x\in B_r$ s.t. $x_1\le0$ and $\omega_2\cdot x<0$}}\big\}
\end{equation}
and when~$\vartheta_1<\vartheta_2$ we have that
\begin{equation}\label{02eorfjgvbg45h54n}\big\{ {\mbox{$x\in B_r$ s.t. $x_1\ge0$ and $\omega_2\cdot x<0$}}\big\}\subset \big\{ {\mbox{$x\in B_r$ s.t. $x_1\ge0$ and $\omega_1\cdot x<0$}}\big\}
.\end{equation}
Indeed, suppose that~$x_1\le0$ and~$\vartheta_1>\vartheta_2$.
Then, since~$\cos\vartheta>0$,
we deduce from~\eqref{qojdwfvl:ERfgb0-ZSipo} that
\begin{eqnarray*}&& \omega_1\cdot x=-x_1\sin\vartheta_1+x_n\cos\vartheta_1
=f(\vartheta_1)\cos\vartheta_1\ge f(\vartheta_2)\cos\vartheta_1\\&&\qquad
=f(\vartheta_2)\cos\vartheta_2\quad\frac{\cos\vartheta_1}{\cos\vartheta_2}=
\omega_2\cdot x
\quad\frac{\cos\vartheta_1}{\cos\vartheta_2}
\end{eqnarray*}
and~\eqref{02eorfjgvbg45h54} follows.

Similarly, if~$\vartheta_1<\vartheta_2$ one obtains~\eqref{02eorfjgvbg45h54n}.

Now we claim that
\begin{equation}\label{wofjvne:23er}
\vartheta_1=\vartheta_2.
\end{equation}
For this, we argue by contradiction.
Namely, suppose that the claim in~\eqref{wofjvne:23er} does not hold.
Then, either~$\vartheta_1>\vartheta_2$ or~$\vartheta_1<\vartheta_2$.
Accordingly, either~\eqref{02eorfjgvbg45h54} holds true (and thus we can use Lemma~\ref{sdfertyu:qwdf}) or~\eqref{02eorfjgvbg45h54n} is satisfied
(and in this case we can rely on footnote~\ref{02eorfjgvbg45h54nb}
on page~\pageref{02eorfjgvbg45h54nb}). In any case,
$$\left|\lim_{(\partial E)\cap\Omega\ni x\to0}\int_{\R^n}\frac{\chi_{E^c}(y)-\chi_E(y)}{|x-y|^{n+s}}\,dy\right|=+\infty.$$
This violates condition~\eqref{eq: s-min surf}
(recall Theorem~\ref{PROP1}) and this proves~\eqref{wofjvne:23er}.

Hence, we set~$\vartheta\coloneqq \vartheta_1=\vartheta_2$ and we have that~$E$ is of class~$C^{1,\alpha}$ in the vicinity of the origin (see Lemma~\ref{CC1}).
As a consequence, to complete the proof of Theorem~\ref{R-wf0jgvhr90tiuohjtktyu},
it remains to check that~$\vartheta=0$.

Suppose not. Then, we can use Lemma~\ref{osjdlcvORmJUfbgb}
and deduce from either~\eqref{m5cfpa6g823lscgokdmc204r-1}
or~\eqref{m5cfpa6g823lscgokdmc204r-2} that
$$\left|\lim_{k\to+\infty}\int_\Omega\frac{\chi_{E^c}(y)-\chi_E(y)}{|x_k-y|^{n+s}}\,dy\right|=+\infty.$$
This is in contradiction with condition~\eqref{eq: free boundary}
(recall Theorem~\ref{PROP1}) and the proof of
Theorem~\ref{R-wf0jgvhr90tiuohjtktyu} is complete.
	\end{proof}
	
\subsection*{Acknowledgments} MB was supported by the European
Research Council under Grant Agreement No 948029. 
SD was supported by the Australian Research Council
Future Fellowship FT230100333 ``New perspectives on nonlocal equations''. EV
was supported by the Australian Laureate Fellowship FL190100081 ``Minimal surfaces, free
boundaries and partial differential equations''.

The authors are grateful to Matteo Cozzi for discussions related to Lemma~\ref{lem: R*}. Most of this work was carried out while MB was visiting the University of Western Australia.

\printbibliography

\end{document}